\DeclareMathOperator{\Mod}{\operatorname{mod}}
\DeclareMathOperator{\Gr}{\operatorname{Gr}}
\DeclareMathOperator{\Stab}{\operatorname{Stab}}
\DeclareMathOperator{\Sph}{\operatorname{Sph}}
\DeclareMathOperator{\sgn}{\operatorname{sgn}}
\DeclareMathOperator{\Tr}{\operatorname{Tr}}
\DeclareMathOperator{\Aut}{\operatorname{Aut}}
\DeclareMathOperator{\Ext}{\operatorname{Ext}}
\DeclareMathOperator{\Hom}{\operatorname{Hom}}
\DeclareMathOperator{\Rep}{\operatorname{Rep}}
\theoremstyle{definition}
\newtheorem{theorem}{Theorem}[section]
\newtheorem{definition}[theorem]{Definition}
\newtheorem{lemma}[theorem]{Lemma}
\newtheorem{proposition}[theorem]{Proposition}
\newtheorem{assumption}[theorem]{Assumption}
\title{Categorified canonical bases and framed BPS states}
\author{
  Dylan G.L. Allegretti
}
\date{}
\begin{document}

\maketitle

\begin{abstract}
We consider a cluster variety associated to a triangulated surface without punctures. The algebra of regular functions on this cluster variety possesses a canonical vector space basis parametrized by certain measured laminations on the surface. To each lamination, we associate a graded vector space, and we prove that the graded dimension of this vector space gives the expansion in cluster coordinates of the corresponding basis element. We discuss the relation to framed BPS states in $\mathcal{N}=2$ field theories of class~$\mathcal{S}$.
\end{abstract}

\tableofcontents

\section{Introduction}

\subsection{Canonical bases and categorification}

Distinguished bases for the coordinate rings of various algebraic spaces have been the subject of intense research in representation theory since the pioneering work of Lusztig~\cite{Lusztig}. These canonical bases do not depend on any arbitrary choices, they can be naturally $q$-deformed, and they have remarkable positivity properties.

In this paper, we study a particular kind of canonical basis discovered by Fock and Goncharov~\cite{FG1}. The starting point for this construction is a compact oriented surface~$\mathbb{S}$ with a finite set $\mathbb{M}$ of marked points such that every boundary component of~$\mathbb{S}$ contains a marked point. Such a pair $(\mathbb{S},\mathbb{M})$ is called a \emph{marked bordered surface}. In general, one can consider marked points in the interior of the surface, also known as \emph{punctures}, but in this paper we will assume that all marked points lie on the boundary. In Section~\ref{sec:GeneralizationToArbitrarySurfaces}, we comment on the possible generalization to an arbitrary marked bordered surface.

Let $\Sigma=(\mathbb{S},\mathbb{M})$ be a marked bordered surface without punctures. In~\cite{FG1}, Fock and Goncharov defined a moduli space $\mathcal{X}_{\Sigma,PGL_2}$ parametrizing $PGL_2$-local systems on the surface with additional data associated to the marked points. This moduli space has an atlas of coordinate charts corresponding to triangulations of the surface. More precisely, an \emph{ideal triangulation} of $\Sigma$ is defined as a triangulation of the surface all of whose edges begin and end at marked points. For any ideal triangulation $T$ of~$\Sigma$, Fock and Goncharov defined a collection of rational coordinates 
\[
X_i:\mathcal{X}_{\Sigma,PGL_2}\dashrightarrow\mathbb{G}_m,
\]
indexed by the internal edges $i$ of~$T$. These coordinates are called \emph{cluster coordinates}. The \emph{cluster Poisson variety}~$\mathcal{X}$ is the open subset of $\mathcal{X}_{\Sigma,PGL_2}$ consisting of points at which these coordinates are regular for some ideal triangulation.

Fock and Goncharov also considered a certain kind of measured lamination on the surface~$\mathbb{S}$. Such a lamination is given by a collection of finitely many nonintersecting curves on~$\mathbb{S}$ with integer weights. These curves may be closed, or they may connect points on the boundary of $\mathbb{S}$ away from the marked points, and they are subject to certain axioms and equivalence relations. Figure~\ref{fig:lamination} shows an example in the case where $\Sigma$ is a one-holed torus with four marked points on its boundary.
\begin{figure}[ht]
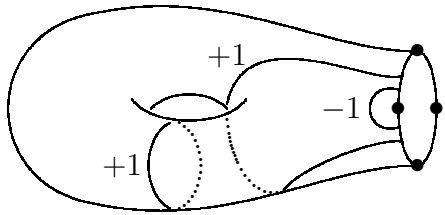
 \begin{center}
\[
\xy 0;/r.6pc/: 
(6,0)*\ellipse(1,3){-};
(-5,5)*{}="U1";
(-5,-5)*{}="L1";
(12,3)*{}="U2";
(12,-3)*{}="L2";
(-2,0)*{}="1";
(2,0)*{}="2";
(-3,0.5)*{}="3";
(3,0.5)*{}="4";
"U1";"L1" **\crv{(-11,4) & (-11,-4)};
"U1";"U2" **\crv{(2,6.5) & (7,3)};
"L1";"L2" **\crv{(2,-6.5) & (7,-3)};
"1";"2" **\crv{(-1,1) & (1,1)};
"3";"4" **\crv{(-2,-1) & (2,-1)};
(12,3)*{\bullet}, 
(12,-3)*{\bullet}, 
(11,0)*{\bullet}, 
(13,0)*{\bullet}, 
(11,1);(11,-1) **\crv{(9,1.5) & (9,-1.5)};
(11.25,1.75);(2,0.25) **\crv{(10,1) & (3,5)};
(11.25,-1.75);(5,-4.25) **\crv{(10,-1.5) & (6,-3)};
(5,-4.3);(2,-0.25) **\crv{~*=<2pt>{.} (3,-5) & (2,-2)};
(-1,-0.75);(-1,-5.25) **\crv{(-2.5,-1.5) & (-2.5,-4.5)};
(-0.5,-0.75);(-0.5,-5.25) **\crv{~*=<2pt>{.} (1,-1.5) & (1,-4.5)};
(-3.5,-3)*{+1}, 
(2,2.75)*{+1}, 
(8,0)*{-1}, 
\endxy
\]
\caption{A lamination on a one-holed torus with four marked points.\label{fig:lamination}}
\end{center} \end{figure}

For each lamination $\ell$, Fock and Goncharov defined a rational function 
\[
\mathbb{I}(\ell)\in\mathbb{Q}(\mathcal{X}_{\Sigma,PGL_2})
\]
on $\mathcal{X}_{\Sigma,PGL_2}$. For example, if $\ell$ is a lamination consisting of a single loop of weight~$k>0$, then $\mathbb{I}(\ell)$ is defined by taking the trace of the $k$th power of the monodromy around the loop. Since the moduli space $\mathcal{X}_{\Sigma,PGL_2}$ and cluster variety $\mathcal{X}$ are birational, $\mathbb{I}(\ell)$ can also be viewed as a rational function on~$\mathcal{X}$ which in fact turns out to be regular. In Section~\ref{sec:TheCanonicalBasisConstruction}, we prove the following statement, extending the result of Fock and Goncharov for punctured surfaces without boundary (\cite{FG1}, Theorem~12.3).

\begin{theorem}
\label{thm:introcanonicalbasis}
If $|\mathbb{M}|>1$ then the functions $\mathbb{I}(\ell)$ form a canonical vector space basis for the algebra $\mathcal{O}(\mathcal{X})$ of regular functions on the cluster Poisson variety.
\end{theorem}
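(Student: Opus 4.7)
The approach is to mirror and adapt Fock and Goncharov's proof of their Theorem~12.3 in~\cite{FG1} from closed punctured surfaces to our bordered setting without punctures. I would fix an ideal triangulation $T$ of $\Sigma$ with internal edge set $I$ and work in the associated cluster chart with coordinates $\{X_i\}_{i\in I}$. The proof splits into three steps: (i)~each $\mathbb{I}(\ell)$ is regular on $\mathcal{X}$; (ii)~the collection $\{\mathbb{I}(\ell)\}$ is linearly independent; (iii)~it spans $\mathcal{O}(\mathcal{X})$.

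For step~(i), the function $\mathbb{I}(\ell)$ can be computed explicitly in $\{X_i\}$: closed-curve components contribute trace-of-monodromy expressions, while components joining boundary segments contribute matrix coefficients between the boundary flags that decorate the endpoints. Both constructions yield Laurent polynomials, and performing the same computation for every ideal triangulation establishes regularity on the whole cluster Poisson variety.

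For step~(ii), I would attach to each lamination $\ell$ its tropical coordinates $\mathbf{a}(\ell)=(a_i(\ell))_{i\in I}\in\mathbb{Z}^I$, essentially the signed, weighted intersection numbers of $\ell$ with the edges of~$T$. The Laurent expansion of $\mathbb{I}(\ell)$ then has a distinguished highest monomial $\prod_i X_i^{a_i(\ell)}$ of coefficient $1$, all other monomials being strictly smaller in a natural dominance order on exponents. Since $\ell\mapsto\mathbf{a}(\ell)$ is injective, comparing highest monomials rules out any nontrivial linear relation among the $\mathbb{I}(\ell)$.

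For step~(iii), given $f\in\mathcal{O}(\mathcal{X})$, I would show that every exponent vector in the Laurent expansion of $f$ lies in the image of $\ell\mapsto\mathbf{a}(\ell)$, and then inductively subtract the scalar multiple of $\mathbb{I}(\ell)$ corresponding to a maximal exponent; this strictly reduces the support of $f$, and termination follows because the exponents are bounded in the relevant cone. The hypothesis $|\mathbb{M}|>1$ enters precisely here: it provides enough room on the boundary for the requisite arcs, making the map from laminations to tropical coordinates surjective onto the cone of exponents that can arise. I expect this spanning step to be the main obstacle, as it requires both a careful parametrization of laminations by tropical integer points adapted to the bordered geometry and a clean termination argument for the inductive reduction.
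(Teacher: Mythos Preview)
Your steps~(i) and~(ii) are essentially correct and match the paper: regularity is checked chart by chart via the explicit polynomial-times-monomial form of $\mathbb{I}(\ell)$, and linear independence follows from the leading-term argument together with the bijection $\ell\mapsto\mathbf{a}(\ell)$ (Proposition~\ref{prop:acoordinates} in the paper).

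The gap is in step~(iii). First, the map $\ell\mapsto\mathbf{a}(\ell)$ is already a bijection $\mathcal{A}(\mathbb{Z}^t)\to\mathbb{Z}^n$ for \emph{every} marked bordered surface, so your proposed use of $|\mathbb{M}|>1$ to force surjectivity onto ``the cone of exponents that can arise'' is vacuous: every integer vector arises. Second, and more seriously, the inductive reduction does not obviously terminate. Subtracting $c\cdot\mathbb{I}(\ell)$ kills the top monomial but introduces all the lower monomials of $\mathbb{I}(\ell)$, whose exponents range down to $\mathbf{h}^{T,\ell}$; there is no uniform lower bound on these as $\ell$ varies, so the maximal exponent can in principle descend forever in $\mathbb{Z}^n$. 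Nothing in your sketch supplies the ``relevant cone'' in which exponents are bounded, and universal Laurentness alone does not obviously give such a bound.

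The paper handles spanning by an entirely different route. It passes from $\mathcal{O}(\mathcal{X})$ to the upper cluster algebra $\mathscr{U}$ on the $\mathcal{A}$-side via $X_j=\prod_i A_i^{\varepsilon_{ji}}$, then invokes Muller's theorem that $\mathscr{U}=\mathscr{A}$ when $|\mathbb{M}|>1$; this is precisely where the hypothesis enters. Once $f$ is written as a polynomial in cluster variables $A_\gamma$ (with Laurent factors in the frozen ones), each monomial is interpreted as a quasi-lamination, crossings are resolved via skein relations, and powers of loop traces are rewritten via Chebyshev identities, all while tracking a $\mathbb{Z}/2\mathbb{Z}$-homology class to guarantee the resulting laminations lie in $\mathcal{A}(\mathbb{Z}^t)$. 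This avoids any termination issue because the skein resolution strictly decreases the number of crossings.
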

 
The proof of this result is based on ideas of Musiker, Schiffler, and Williams~\cite{MSW2} who defined canonical bases for cluster algebras arising from triangulated surfaces. A more general method of constructing canonical bases using tools from mirror symmetry was recently developed by Gross, Hacking, Keel, and Kontsevich~\cite{GHKK}, and their results have been used by Goncharov and Shen~\cite{GS} to describe canonical bases for the coordinate rings of cluster varieties associated to more general surfaces. In this paper, rather than work with the abstractly defined bases of~\cite{GHKK}, we focus on bases defined concretely in terms of trace functions on moduli spaces of local systems.

The functions $\mathbb{I}(\ell)$ which give the canonical basis can be written in a particularly nice way using the coordinates described above. Indeed, suppose $T$ is an ideal triangulation of~$\Sigma$ and $\Gamma_T=\bigoplus_{j\in J}\mathbb{Z}\mathbf{e}_j$ is the lattice with basis vectors $\mathbf{e}_j$ indexed by the set $J=J_T$ of internal edges of~$T$. Then each function $\mathbb{I}(\ell)$ can be expanded as 
\begin{align}
\label{eqn:expansion}
\mathbb{I}(\ell)=\sum_{\mathbf{d}\in\Gamma_T}c_{\mathbf{d}} X_{\mathbf{d}}
\end{align}
where the coefficients $c_{\mathbf{d}}$ are nonnegative integers and we write $X_{\mathbf{d}}=\prod_{j\in J}X_j^{d_j}$ for each vector $\mathbf{d}=\sum d_j\mathbf{e}_j$. It is natural to suspect that the canonical basis described above should possess some sort of categorification. More precisely, the expansion~\eqref{eqn:expansion} should arise as the graded dimension of some naturally defined graded vector space. The goal of this paper is to give a representation theoretic construction of such a vector space and show that it is closely related to the mathematical description of line defects in certain supersymmetric quantum field theories.

\subsection{The main construction}

Let us describe the main idea of our construction in more detail. Fix an ideal triangulation~$T$ of the marked bordered surface $\Sigma=(\mathbb{S},\mathbb{M})$. From this triangulation, one can construct an associated quiver~$Q(T)$. Roughly speaking, this quiver is obtained by drawing a vertex dual to each internal edge of the triangulation and connecting these vertices by arrows within each of the triangles. Figure~\ref{fig:quivertriangulation} shows an example in the case where $T$ is an ideal triangulation of a disk with eight marked points on its boundary.
\begin{figure}[ht]
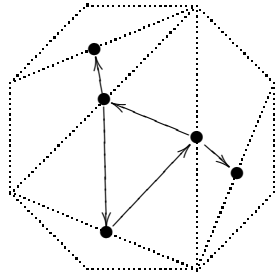
 \begin{center}
\[
\xy /l1.5pc/:
{\xypolygon8"A"{~:{(3,0):}~>{.}}},
{"A1"\PATH~={**@{.}}'"A3"'},
{"A1"\PATH~={**@{.}}'"A6"'},
{"A3"\PATH~={**@{.}}'"A6"'},
{"A3"\PATH~={**@{.}}'"A5"'},
{"A8"\PATH~={**@{.}}'"A6"'},
(-0.15,0)*{\bullet}="a", 
(1.8,-0.8)*{\bullet}="b",
(1.75,2)*{\bullet}="c",
(2,-1.85)*{\bullet}="d",
(-1,0.75)*{\bullet}="e", 
\xygraph{
"a":"b",
"b":"c",
"c":"a",
"b":"d",
"a":"e",
}
\endxy
\]
\caption{The quiver associated to an ideal triangulation.\label{fig:quivertriangulation}}
\end{center} \end{figure}
The quiver~$Q(T)$ comes with a canonical potential $W(T)$ defined by Labardini-Fragoso~\cite{L1}.

The \emph{Jacobian algebra} $J(Q,W)$ of a quiver with potential $(Q,W)$ is defined as a quotient of the completed path algebra of the quiver by certain relations coming from the potential~\cite{DWZ1}. For the quivers with potential that we consider, the Jacobian algebra is known to be finite-dimensional~\cite{L1}, and therefore we can apply a construction of Amiot~\cite{Amiot} to get a 2-Calabi-Yau triangulated category known as the \emph{cluster category} $\mathcal{C}(Q,W)$. It follows from results of Keller and Yang~\cite{KellerYang} that if $T$ and $T'$ are two ideal triangulations of $\Sigma$, then the categories $\mathcal{C}(Q(T),W(T))$ and $\mathcal{C}(Q(T'),W(T'))$ are equivalent, and therefore there is a cluster category~$\mathcal{C}$ canonically associated to the marked bordered surface $\Sigma$.

A result of Br\"ustle and Zhang~\cite{BrustleZhang} gives an explicit parametrization of the indecomposable objects of this cluster category~$\mathcal{C}$. According to this result, an indecomposable object is either a \emph{string object} corresponding to an arc on $\mathbb{S}$ connecting two marked points, or it is a \emph{band object} corresponding to a loop on the surface with additional decorations. For any ideal triangulation~$T$, there is a functor 
\[
\label{eqn:functor}
\mathcal{C}\rightarrow\Mod J(Q(T),W(T))
\]
from the cluster category to the category of finite-dimensional modules over the Jacobian algebra of~$(Q(T),W(T))$. Applying this functor to a string object or band object gives an indecomposable module over the Jacobian algebra known as a \emph{string module} or \emph{band module}, respectively. Modules associated to arcs on a surface have been studied by many authors because of their relations to cluster variables; see for example~\cite{CCS,ABCP,L2}. Modules associated to loops are less well studied, but they have appeared in~\cite{DupontThomas}.

In Section~4, we associate to each ideal triangulation $T$ and each lamination $\ell$ a module~$M_{T,\ell}$ over the corresponding Jacobian algebra $J(Q(T),W(T))$. It is defined as a direct sum of string and band modules associated to the constituent curves of the lamination. For any vector $\mathbf{d}\in\Gamma_T$ in the lattice defined above, the \emph{quiver Grassmannian} $\Gr_{\mathbf{d}}(M_{T,\ell})$ is a complex projective variety parametrizing submodules of~$M_{T,\ell}$ which, when regarded as representations of the quiver~$Q(T)$, have dimension vector~$\mathbf{d}$. We consider a constructible subset $\Gr_{\mathbf{d}}^\circ(M_{T,\ell})\subseteq\Gr_{\mathbf{d}}(M_{T,\ell})$ generalizing the \emph{transverse quiver Grassmannian} studied in~\cite{Dupont,CerulliEsposito,CerulliDupontEsposito}. Our first main result is the following.

\begin{theorem}
\label{thm:introcategorification1}
Let $T$ be an ideal triangulation of~$\Sigma$ and $\ell$ be a lamination. Then there exists a vector $\mathbf{h}=\mathbf{h}^{T,\ell}\in\Gamma_T$ such that 
\[
\mathbb{I}(\ell)=\sum_{\mathbf{d}\in\Gamma_T, i\in\mathbb{Z}}(-1)^i\dim_{\mathbb{C}}\mathcal{H}^{T,\ell}_{\mathbf{d},i}\cdot X_{\mathbf{d}}
\]
where $\mathcal{H}^{T,\ell}_{\mathbf{d},i}=H^i(\Gr_{\mathbf{d}-\mathbf{h}}^\circ(M_{T,\ell}),\mathbb{C})$ is the singular cohomology of the subset described above in the analytic topology.
\end{theorem}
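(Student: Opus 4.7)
The plan is to combine the cluster expansion formulas of Musiker-Schiffler-Williams for $\mathbb{I}(\ell)$ with a representation-theoretic interpretation of their coefficients in terms of (transverse) quiver Grassmannians. Both sides of the desired identity behave well under the decomposition of $\ell$ into constituent curves: $M_{T,\ell}$ is by definition a direct sum indexed by the curves of $\ell$, quiver Grassmannians of direct sums decompose compatibly with cohomology via the K\"unneth formula, and $\mathbb{I}$ factors multiplicatively on disjoint weighted components of a lamination. I would therefore first reduce to the case in which $\ell$ consists of a single weighted curve and build $\mathbf{h}^{T,\ell}$ additively from per-curve contributions.

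For a single arc $c$ connecting boundary marked points, $M_{T,c}$ is a string module. Here I would use the MSW snake graph expansion to write $\mathbb{I}(c)$ as a sum of monomials indexed by perfect matchings of the snake graph of $c$. The bijection between perfect matchings of a snake graph and submodules of the associated string module, developed in work of Musiker-Schiffler, Cerulli Irelli, and Labardini-Fragoso-Schiffler, identifies the terms of the expansion with submodules of $M_{T,c}$ of prescribed dimension vector. The shift vector $\mathbf{h}$ for a single arc is the $\mathbf{g}$-vector of the string module. For string modules the quiver Grassmannian $\Gr_{\mathbf{d}}(M_{T,c})$ admits a cell decomposition indexed by these submodules, and the transverse subset coincides with the ambient Grassmannian on the level of Euler characteristics, so the alternating sum of Betti numbers equals the matching count.

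For a single closed loop $\gamma$ of weight $k$, the corresponding summand of $M_{T,\ell}$ is a direct sum of band modules, and the analysis is more delicate. The function $\mathbb{I}(k\gamma)$ is a Chebyshev-type polynomial in the trace of the monodromy around $\gamma$, and its cluster expansion was again given in terms of perfect matchings of a band graph by Musiker-Schiffler-Williams. The quiver Grassmannians of band modules are no longer zero-dimensional; they carry a natural $\mathbb{C}^*$-action whose fixed loci encode the internal band structure. The constructible subset $\Gr_{\mathbf{d}}^\circ(M_{T,\ell})$ is designed precisely to discard contributions from strictly smaller band substructures that would otherwise be over-counted. I would then compute the alternating sum of Betti numbers of $\Gr^\circ$ via a stratification compatible with this torus action and compare it, term by term, with the band-graph matching count after identifying $\mathbf{h}$ with the corresponding tropical point.

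The main obstacle I anticipate lies in this last step: verifying that the Euler characteristic of $\Gr_{\mathbf{d}-\mathbf{h}}^\circ(M_{T,\ell})$ exactly reproduces the MSW coefficient $c_{\mathbf{d}}$ when band modules are present. This requires (a) a sufficiently robust definition of $\Gr^\circ$ that behaves well under direct sums and under the natural torus action, and (b) a verification that, in the alternating cohomology sum, closures between strata cancel so that only the genuinely transverse submodules contribute, matching the product-of-Chebyshev-polynomial structure of $\mathbb{I}(\ell)$. A careful local computation for a single band module over a cycle, combined with the K\"unneth-type reduction of the first paragraph, should ultimately reduce the loop case to the string-module analysis and close the argument.
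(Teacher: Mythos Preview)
Your overall architecture---reduce to single curves, treat arcs and loops separately, assemble additively---matches the paper's proof. But two of your reduction steps are stated incorrectly, and the paper's actual mechanisms are different in ways worth knowing.

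First, the claim that ``quiver Grassmannians of direct sums decompose compatibly with cohomology via the K\"unneth formula'' is false. A submodule of $M_1\oplus M_2$ need not split as $N_1\oplus N_2$, so $\Gr_{\mathbf d}(M_1\oplus M_2)$ is not a disjoint union of products $\Gr_{\mathbf d_1}(M_1)\times\Gr_{\mathbf d_2}(M_2)$. What \emph{is} true, and what the paper uses, is that a torus $(\mathbb{C}^*)^{p-1}$ acts on $\Gr_{\mathbf d}(M_1\oplus\cdots\oplus M_p)$ by rescaling the summands, the fixed locus consists exactly of the split submodules, and Euler characteristics with compact support are unchanged by passing to fixed points (Haupt). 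Since the theorem only concerns the alternating sum $\sum_i(-1)^i\dim H^i=\chi$, this suffices; but it is localization, not K\"unneth.

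Second, for a closed loop $\gamma$ of weight $k$, the module $M_{T,k\gamma}$ is \emph{not} a direct sum of band modules: it is the single indecomposable band module $M(w,k,\lambda)$, where $k$ enters as the size of the Jordan block. This is essential. The transverse Grassmannian $\Gr^\circ$ of $M(w,k,\lambda)$ is what produces the $k$th Chebyshev polynomial of the trace, via Dupont's theorem $CC^\circ(M(w,k,\lambda))=P_k(CC^\circ(M(w,1,\lambda)))$; a direct sum $M(w,1,\lambda)^{\oplus k}$ would instead give the $k$th power, which is the wrong function.

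On the positive side, your arc strategy (MSW snake graphs and the matching--submodule bijection) is a genuine alternative to what the paper does: the paper simply quotes the Derksen--Weyman--Zelevinsky/Labardini-Fragoso identification of the $F$-polynomial of an arc with the Euler-characteristic generating function of $\Gr_{\mathbf e}(M_{T,\gamma})$. For loops, the paper's route is also more concrete than your sketch: it unfolds the loop to an annulus via a covering-of-quivers argument (again using Haupt's torus-fixed-point result to transport Euler characteristics), computes the weight-$1$ annulus case directly from a state-sum description of the trace, and then invokes Dupont to pass to arbitrary weight. Your ``stratification compatible with the torus action'' is essentially this, but you should be aware that the hard input is Dupont's Chebyshev identity for $CC^\circ$, not a local computation you can redo by hand.
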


The importance of quiver Grassmannians in the theory of cluster algebras was first recognized by Caldero and Chapoton~\cite{CalderoChapoton} who showed that these varieties can be used to categorify the generators of certain cluster algebras. Their results were refined and generalized by Derksen, Weyman, and Zelevinsky~\cite{DWZ2} who showed that generating functions for Euler characteristics of quiver Grassmannians coincide with the $F$-polynomials introduced by Fomin and Zelevinsky~\cite{FZIV}. Related ideas have been used by Dupont to define canonical bases for cluster algebras~\cite{Dupont11}. Note that we categorify functions on the cluster Poisson variety rather than the cluster variables studied in these earlier works.

In the statement of Theorem~\ref{thm:introcategorification1}, we prefer to write the expansion in terms of the cohomology spaces $\mathcal{H}^{T,\ell}_{\mathbf{d},i}$ rather than Euler characteristics as is typical in the theory of cluster algebras. The reason is that the graded vector space 
\[
\mathcal{H}^{T,\ell}=\bigoplus_{\mathbf{d},i}\mathcal{H}_{\mathbf{d},i}^{T,\ell}
\]
which categorifies the canonical basis element $\mathbb{I}(\ell)$ is closely related to the vector space of framed BPS states considered by Gaiotto, Moore, and Neitzke in the context of $\mathcal{N}=2$ field theories~\cite{GMN3}, and we suggest that Theorem~\ref{thm:introcategorification1} may provide a rigorous mathematical construction of this graded vector space.

\subsection{Formulation in terms of framed quivers}

To make contact with the theory of framed BPS states, we will now describe a reformulation of Theorem~\ref{thm:introcategorification1}. In this reformulation, $\Gr_{\mathbf{d}}^\circ(M_{T,\ell})$ is reinterpreted as a moduli space of stable representations of a framed quiver. In this introduction, we will discuss this reformulation in terms of stability conditions on a triangulated category, which is the most natural context for this result. For more details on stability conditions, we refer to~\cite{BridgelandSmith}.

As before, we consider the quiver with potential $(Q(T),W(T))$ constructed from an ideal triangulation $T$. Associated to this quiver with potential is a 3-Calabi-Yau triangulated category $\mathcal{D}(Q(T),W(T))$. Explicitly, it is defined as the subcategory of the derived category of the complete Ginzburg algebra of $(Q(T),W(T))$ consisting of modules with finite-dimensional cohomology. It follows from~\cite{KellerYang} that the categories associated to two different ideal triangulations by this construction are equivalent, and therefore we have a category $\mathcal{D}$ canonically associated to the marked bordered surface.

We consider a complex manifold $\mathcal{B}(\Sigma)$ parametrizing Bridgeland stability conditions on the category $\mathcal{D}$. In fact, the space we consider is a quotient 
\[
\mathcal{B}(\Sigma)=\Stab^*(\mathcal{D})/\Sph(\mathcal{D})
\]
where $\Stab^*(\mathcal{D})$ is a connected component of the usual manifold of stability conditions~\cite{Bridgeland07} and $\Sph(\mathcal{D})$ is a subgroup of the group of exact autoequivalences of~$\mathcal{D}$ generated by functors called \emph{spherical twists}~\cite{SeidelThomas}. A point in the space $\mathcal{B}(\Sigma)$ can be understood as a pair $(Z,\mathcal{A})$ where $\mathcal{A}$ is a full abelian subcategory of $\mathcal{D}$ and $Z:K(\mathcal{A})\rightarrow\mathbb{C}$ is a homomorphism called the \emph{central charge} mapping the Grothendieck group of~$\mathcal{A}$ into the complex numbers so that if $0\neq E\in\mathcal{A}$, then $Z([E])$ lies in the semi-closed upper half plane 
\[
\mathfrak{h}=\{r\exp(i\pi\phi):r>0 \text{ and } 0<\phi\leq1\}\subset\mathbb{C}.
\]
The space $\mathcal{B}(\Sigma)$ contains a collection of real codimension~1 subspaces called \emph{walls of the second kind}, and the complement of these walls in $\mathcal{B}(\Sigma)$ is a union of components called \emph{chambers}. If $(Z,\mathcal{A})$ lies in a chamber, then there is a corresponding ideal triangulation~$T$ such that the category $\mathcal{A}$ is equivalent to a category of finite dimensional modules
\[
\mathcal{A}\cong\Mod J(Q(T),W(T))
\]
over the Jacobian algebra $J(Q(T),W(T))$.

Suppose we are given a stability condition $\sigma=(Z,\mathcal{A})$ which does not lie on a wall of the second kind. If $T$ is the corresponding ideal triangulation, then there is a natural bijection between isomorphism classes of simple objects in~$\mathcal{A}$ and internal edges of~$T$. Therefore, we can regard the central charge map as a homomorphism 
\[
Z:\Gamma_T\rightarrow\mathbb{C}.
\]
The signed adjacency matrix of the quiver $Q=Q(T)$ defines a skew-form $\langle\cdot,\cdot\rangle$ on the lattice~$\Gamma_T$. Suppose $\ell$ is a lamination having the special property that $\langle\mathbf{e}_j,\mathbf{h}\rangle\geq0$ where $\mathbf{h}=\mathbf{h}^{T,\ell}$ is the vector appearing in Theorem~\ref{thm:introcategorification1}. Then we can modify the quiver $Q$ by drawing an additional vertex~$\infty$ and drawing $\langle\mathbf{e}_j,\mathbf{h}\rangle$ arrows starting at the vertex of $Q$ corresponding to the edge~$j$ and ending at the new vertex~$\infty$. The resulting quiver~$\widetilde{Q}$ is called a \emph{framed quiver}. The potential $W=W(T)$ determines a corresponding potential $\widetilde{W}$ for~$\widetilde{Q}$ in an obvious way, and we can extend the central charge to a map 
\[
\widetilde{Z}:\Gamma_T\oplus\mathbb{Z}\mathbf{e}_\infty\rightarrow\mathbb{C}
\]
by setting $\widetilde{Z}(\mathbf{e}_\infty)=m\zeta$ where $m\gg0$ and choosing the phase $\zeta\in U(1)$ so that $\arg\widetilde{Z}(\mathbf{e}_\infty)<\arg\widetilde{Z}(\mathbf{e}_j)$ for all~$j$. A module $M$ over $J(\widetilde{Q},\widetilde{W})$ is called \emph{stable} if we have 
\[
\arg\widetilde{Z}([N])<\arg\widetilde{Z}([M])
\]
for every proper nontrivial submodule $N\subset M$. Given a vector $\mathbf{d}=\sum d_j\mathbf{e}_j$, we write $\mathcal{M}_{\mathbf{d}}^{\mathrm{st}}(\widetilde{Q})$ for the moduli space of stable modules which, when viewed as representations of the quiver~$\widetilde{Q}$, have dimension $d_j$ at the vertex corresponding to an arc $j$ and have dimension~1 at~$\infty$.

\begin{theorem}
\label{thm:introcategorification2}
Let $\sigma$ be a stability condition with associated ideal triangulation~$T$. Let $\ell$ be a lamination such that $\langle \mathbf{e}_j,\mathbf{h}\rangle\geq0$ for all $j$ where $\mathbf{h}=\mathbf{h}^{T,\ell}$ is the vector appearing in Theorem~\ref{thm:introcategorification1}. Then there is an isomorphism of varieties $\mathcal{M}_{\mathbf{d}}^{\mathrm{st}}(\widetilde{Q})\cong\Gr_{\mathbf{d}}^\circ(M_{T,\ell})$ for any dimension vector~$\mathbf{d}$. In particular, 
\[
\mathbb{I}(\ell)=\sum_{\mathbf{d}\in\Gamma_T, i\in\mathbb{Z}}(-1)^i\dim_{\mathbb{C}}\mathcal{H}^{\sigma,\ell}_{\mathbf{d},i}\cdot X_{\mathbf{d}}
\]
where $\mathcal{H}^{\sigma,\ell}_{\mathbf{d},i}=H^i(\mathcal{M}_{\mathbf{d}-\mathbf{h}}^{\mathrm{st}}(\widetilde{Q}),\mathbb{C})$.
\end{theorem}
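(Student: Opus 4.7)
The statement packages an isomorphism of varieties $\mathcal{M}_{\mathbf{d}}^{\mathrm{st}}(\widetilde{Q})\cong\Gr_{\mathbf{d}}^\circ(M_{T,\ell})$ together with the resulting expansion of $\mathbb{I}(\ell)$. The expansion follows by substituting the isomorphism into Theorem~\ref{thm:introcategorification1} (with the natural shift of $\mathbf{d}$ by $\mathbf{h}$), so the real work is the geometric identification, which I would organize into three steps.

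The first step is to unpack the stability condition for $m\gg 0$. A $J(\widetilde{Q},\widetilde{W})$-module of dimension $(\mathbf{d},1)$ amounts to a $J(Q,W)$-module $V$ of dimension $\mathbf{d}$ together with, for each internal edge $j$, an arbitrary linear map $\phi_j:V_j\to\mathbb{C}^{\langle\mathbf{e}_j,\mathbf{h}\rangle}$, because the new arrows all end at $\infty$ and participate in no new relations coming from $\widetilde{W}$. A proper nontrivial submodule $N\subset M$ has either $N_\infty=0$ or $N_\infty=\mathbb{C}$; using the hypothesis $\arg\widetilde{Z}(\mathbf{e}_\infty)<\arg\widetilde{Z}(\mathbf{e}_j)$ for every $j$ together with the equivalent formulation $\arg\widetilde{Z}(M/N)>\arg\widetilde{Z}(M)$, one finds that for $m$ sufficiently large the inequalities needed for stability are automatic in the second case while in the first case they collapse to the single condition that no nonzero $J(Q,W)$-subrepresentation of $V$ is contained in $\bigcap_j\ker\phi_j$.

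The second step is to translate this into a quiver Grassmannian of an injective. Let $I_j$ denote the injective envelope of the simple $J(Q,W)$-module at vertex $j$, so that $\Hom_{J(Q,W)}(V,I_j)\cong V_j^*$ for every finite-dimensional $V$. The collection $(\phi_j)_j$ then assembles canonically into a $J(Q,W)$-module map $\Phi:V\to I$ with $I=\bigoplus_j I_j^{\oplus\langle\mathbf{e}_j,\mathbf{h}\rangle}$, and the condition from the previous step translates exactly to $\Phi$ being injective. Quotienting by the change-of-basis action on $V$ (via King's GIT construction) identifies $\mathcal{M}^{\mathrm{st}}_{\mathbf{d}}(\widetilde{Q})$ as a scheme with the quiver Grassmannian $\Gr_{\mathbf{d}}(I)$ of $\mathbf{d}$-dimensional subrepresentations of $I$.

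The final step---and the main obstacle---is to identify $\Gr_{\mathbf{d}}(I)$ with $\Gr^\circ_{\mathbf{d}}(M_{T,\ell})$. Using the description of $M_{T,\ell}$ from Section~4 as a direct sum of string and band modules for the lamination $\ell$, I would show that $I$ is naturally the injective envelope of $M_{T,\ell}$, characterizing $\mathbf{h}^{T,\ell}$ precisely by the requirement that the dimension of $I$ at vertex $j$ equals $\langle\mathbf{e}_j,\mathbf{h}\rangle$. The inclusion $M_{T,\ell}\hookrightarrow I$ then realizes $\Gr_{\mathbf{d}}(M_{T,\ell})$ as a closed subscheme of $\Gr_{\mathbf{d}}(I)$, and the transverse locus $\Gr^\circ_{\mathbf{d}}(M_{T,\ell})$ should coincide with the constructible subset of subrepresentations of $I$ which factor through $M_{T,\ell}$ and yield stable framed data. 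Verifying this coincidence is the combinatorial heart of the proof, and requires interlacing the Br\"ustle-Zhang classification of indecomposables in $\mathcal{C}$, Labardini-Fragoso's potential $W(T)$, and the explicit construction of $\mathbf{h}^{T,\ell}$ from the proof of Theorem~\ref{thm:introcategorification1}.
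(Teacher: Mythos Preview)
Your first two steps are essentially the paper's argument: stability reduces to the cocyclic condition (the paper's Proposition~5.4), and the moduli space of cocyclic framed modules is identified with $\Gr_{\mathbf{d}}(I)$ for $I=\bigoplus_j I_j^{\oplus n_j}$ via Fedotov's theorem (Theorem~5.5). So far so good.

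The gap is in your third step. You propose to embed $M_{T,\ell}\hookrightarrow I$ as an injective envelope and then argue that $\Gr^\circ_{\mathbf{d}}(M_{T,\ell})$ picks out the right constructible subset of $\Gr_{\mathbf{d}}(I)$. But if the inclusion were proper, $\Gr_{\mathbf{d}}(M_{T,\ell})$ would be a proper closed subscheme of $\Gr_{\mathbf{d}}(I)$, and no constructible subset of it could recover all of $\Gr_{\mathbf{d}}(I)\cong\mathcal{M}_{\mathbf{d}}^{\mathrm{st}}(\widetilde{Q})$. The only way the identification can hold is if $M_{T,\ell}\cong I$ on the nose, and that is exactly what the paper proves. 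The key observation you are missing is that the hypothesis $\langle\mathbf{e}_j,\mathbf{h}\rangle\geq 0$ for all $j$ forces $\ell$ to contain \emph{no closed loops}: the $n_j$ are negatives of shear parameters of the associated curves, and a closed loop always produces shear parameters of both signs. Once $\ell$ has only open curves, the paper shows directly (Lemma~5.6) that each constituent string module $M_{T,\gamma_{k_r}}$ is an indecomposable injective $I_j$, the relevant $j$ being the unique arc at which the shifted curve crosses the quadrilateral diagonally; summing gives $M_{T,\ell}\cong I\otimes V$.

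With this in hand the ``transverse'' decoration becomes vacuous---since there are no band summands, $\Gr^\circ_{\mathbf{d}}(M_{T,\ell})=\Gr_{\mathbf{d}}(M_{T,\ell})$ by definition---and the isomorphism $\mathcal{M}_{\mathbf{d}}^{\mathrm{st}}(\widetilde{Q})\cong\Gr_{\mathbf{d}}(I)\cong\Gr_{\mathbf{d}}(M_{T,\ell})=\Gr^\circ_{\mathbf{d}}(M_{T,\ell})$ follows immediately. There is no ``combinatorial heart'' beyond the shear-parameter argument ruling out loops; your proposed route through injective envelopes and a delicate matching of constructible loci is both unnecessary and, as stated, cannot succeed. (A minor side remark: your parenthetical claim that $\dim I(j)=\langle\mathbf{e}_j,\mathbf{h}\rangle$ conflates the multiplicity of $I_j$ in $I$ with the dimension of $I$ at vertex $j$; these are different.)
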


\subsection{Line defects and framed BPS states}

Our proposal for the categorification of Fock and Goncharov's canonical basis is closely related to the physical ideas of Gaiotto, Moore, and Neitzke~\cite{GMN3}. Their work concerns four-dimensional quantum field theories with $\mathcal{N}=2$ supersymmetry. Specifically, they consider such a field theory in the presence of a \emph{line defect}. This is a modification of the definition of the theory corresponding to a choice of one-dimensional submanifold of spacetime.

Part of the data of an $\mathcal{N}=2$ field theory is a complex manifold $\mathcal{B}$ called the \emph{Coulomb branch} which serves as a parameter space for the theory. A choice of $u\in\mathcal{B}$ corresponds in physics to a choice of \emph{vacuum}. At a generic point $u$ of the Coulomb branch, one has a lattice $\Gamma_u$ called the \emph{charge lattice}. In the presence of a line defect~$L$, the Hilbert space of the theory in the vacuum $u$ contains a distinguished $\Gamma_u$-graded subspace called the space of \emph{framed BPS states}. Information about the graded component corresponding to $\gamma\in\Gamma_u$ is encoded in an expression $\underline{\overline{\Omega}}(L,\gamma,u,y)$ called the \emph{framed protected spin character}. It is a function of a parameter $y$, and we write $\underline{\overline{\Omega}}(L,\gamma,u)$ for its specialization at $y=1$. The latter is an integer which is interpreted as counting framed BPS states of charge~$\gamma$.

An important class of $\mathcal{N}=2$ theories studied by Gaiotto, Moore, and Neitzke are the theories of \emph{class~$\mathcal{S}$}. Physically, these theories are obtained by compactifying the six-dimensional (2,0) theory on a punctured Riemann surface with extra data associated to the punctures. Mathematically, such a theory is encoded in the data of a marked bordered surface $\Sigma$. A generic point $u$ in the Coulomb branch determines a stability condition $\sigma$ and an ideal triangulation $T$ of~$\Sigma$. The charge lattice~$\Gamma_u$ can then be identified with~$\Gamma_T$. Theories of class~$\mathcal{S}$ support line defects labeled by a choice of lamination $\ell$ on the marked bordered surface and a complex number $\zeta\in\mathbb{C}^*$. It was argued in~\cite{GMN3} that for a line defect $L=L(\ell,\zeta)$ with $\arg\zeta=0$, the number $\underline{\overline{\Omega}}(L,\gamma,u)$ coincides with the coefficient $c_\gamma$ in the expansion~\eqref{eqn:expansion}.

In~\cite{CordovaNeitzke}, C\'ordova and Neitzke described another method for calculating $\underline{\overline{\Omega}}(L,\gamma,u)$. In the case of a theory of class~$\mathcal{S}$ with a line defect $L=L(\ell,\zeta)$ satisfying the condition~$\langle\mathbf{e}_j,\mathbf{h}\rangle\geq0$ for all~$j$, their method involves the moduli space $\mathcal{M}=\mathcal{M}_{\gamma-\mathbf{h}}^{\mathrm{st}}(\widetilde{Q})$ considered above. In this context, the vector $\mathbf{h}=\mathbf{h}^{T,\ell}$ is called the \emph{core charge} of the line defect and plays an important role in the physical considerations of~\cite{CordovaNeitzke}. If this moduli space is smooth, then it is a K\"ahler manifold of complex dimension~$N$, and it was proposed in~\cite{CordovaNeitzke} that the framed protected spin character is given in terms of its Hodge numbers by the expression 
\[
\underline{\overline{\Omega}}(L,\gamma,u,y)=\sum_{p,q=0}^N h^{p,q}(\mathcal{M})(-1)^{p-q}y^{2p-N}.
\]
In particular, setting $y=1$ in this expression, we see that $\underline{\overline{\Omega}}(L,\gamma,u)$ is the Euler characteristic of $\mathcal{M}$. Thus the proposal of C\'ordova and Neitzke predicts that the coefficients of the canonical functions $\mathbb{I}(\ell)$ arise as Euler characteristics of framed quiver moduli spaces. Theorem~\ref{thm:introcategorification2} confirms this prediction and shows that it is a special case of the more general categorification of canonical bases provided by Theorem~\ref{thm:introcategorification1}. It is natural to conjecture that the vector space $\mathcal{H}^{T,\ell}$ defined by this theorem is closely related to the space of framed BPS~states mentioned above.

This connection between categorified basis elements and framed BPS states builds on many earlier results in mathematics and physics. In~\cite{Cirafici13}, the relationship between framed quivers and framed BPS states was studied in theories of class~$\mathcal{S}$. In~\cite{CDMMS,Williams,Cirafici18,CiraficiDelZotto}, framed quiver moduli spaces were used to compute framed BPS indices. In particular, the reference~\cite{Williams} by Williams shares many similarities with the present paper. It discusses the relationship with cluster characters and quiver Grassmannians for another class of field theories defined by a quiver with potential. The idea that framed BPS indices can be computed by taking cluster characters of string and band objects was also suggested in~\cite{CaorsiCecotti}.

\subsection{Further directions}

The results of this paper suggest several interesting questions for future research. Below we describe some of these problems and review the relevant literature.

\subsubsection{Generalization to arbitrary surfaces}
\label{sec:GeneralizationToArbitrarySurfaces}

One obvious problem is to generalize the construction of the present paper to the case where $\Sigma$ is an arbitrary marked bordered surface, possibly with punctures. In the case of a punctured surface, the quiver with potential associated to an ideal triangulation was defined in~\cite{L1}. The cluster category associated to such a surface is understood in this case, provided the surface has at least one boundary component~\cite{QiuZhou}. Modules associated to arcs were constructed in~\cite{L2} in the case where the ideal triangulation has no self-folded triangles, and more generally in~\cite{Dominguez}. In the latter two papers, difficult calculations are needed to prove that the modules constructed are annihilated by the Jacobian relations. On the other hand, there does not appear to be any discussion in the existing literature of modules associated to closed loops.

\subsubsection{Categorification of theta functions}

In~\cite{GHKK}, Gross, Hacking, Keel, and Kontsevich used ideas from mirror symmetry to construct canonical bases in a more general setting than the one considered here. In~\cite{GS}, Goncharov and Shen showed that their results can be applied to cluster varieties arising from marked bordered surfaces. The elements of these canonical bases are called \emph{theta functions} and are believed to coincide with the functions $\mathbb{I}(\ell)$ of Fock and Goncharov although this is not known at present. In~\cite{Bridgeland16}, Bridgeland showed showed how to categorify some of the theta functions using moduli spaces of framed quiver representations. The result of~\cite{Bridgeland16} imposes a condition similar to the condition $\langle\mathbf{e}_j,\mathbf{h}\rangle\geq0$ in the statement of Theorem~\ref{thm:introcategorification2}. In view of the results of the present paper, it seems likely that the full basis of theta functions could be categrorified by replacing the framed quiver moduli spaces of~\cite{Bridgeland16} by quiver Grassmannians or constructible subsets of these varieties.

\subsubsection{Categorification of quantized canonical bases}

As its name suggests, the cluster Poisson variety $\mathcal{X}$ has a natural Poisson structure and can be canonically quantized. In other words, there exists a family of noncommutative algebras~$\mathcal{O}_q(\mathcal{X})$, depending on a parameter~$q\in\mathbb{C}^*$, such that $\mathcal{O}_q(\mathcal{X})$ coincides with the algebra of regular functions on $\mathcal{X}$ in the classical $q=1$ limit. In~\cite{AllegrettiKim}, the canonical basis construction of Fock and Goncharov was generalized to this quantum setting in the case where $\Sigma$ is a punctured surface without boundary. In other words, it was shown that there exist elements $\mathbb{I}^q(\ell)\in\mathcal{O}_q(\mathcal{X})$ which coincide with the regular functions $\mathbb{I}(\ell)$ defined by Fock and Goncharov in the classical limit. This result was later extended to the case where $\Sigma$ is an unpunctured disk with marked points on its boundary~\cite{Allegretti16}, and the positivity properties of the quantized basis elements were further discussed in~\cite{CKKO}. In addition to these results describing the quantization of the canonical basis, many of the results used in the present paper to categorify the canonical basis also have quantum analogs. For example, the result that we use to categorify cluster variables has a quantum analog discussed from different points of view in~\cite{Rupel,QinKeller,Efimov}. It would be interesting to use these results to construct a categorification of the quantized basis elements $\mathbb{I}^q(\ell)$. Such a construction would provide a mathematical approach to the framed protected spin characters in theories of class~$\mathcal{S}$. For partial results in this direction, see~\cite{CarnakciLampe}.

\subsubsection{Monoidal categorification}

Theorem~\ref{thm:introcategorification1} shows that the vector space $\mathcal{H}^\ell=\mathcal{H}^{T,\ell}$ provides a categorification of the canonical function $\mathbb{I}(\ell)$ in the sense that its graded dimension gives the expansion of this function in cluster coordinates. It would be interesting to promote this to a monoidal categorification of the algebra $\mathcal{O}(\mathcal{X})$. For any two laminations $\ell$ and $\ell'$, it should be possible to find vector spaces $\mathcal{N}^{\ell,\ell'}_{\ell''}$ such that 
\[
\mathcal{H}^\ell\otimes\mathcal{H}^{\ell'}=\bigoplus_{\ell''}\mathcal{N}^{\ell,\ell'}_{\ell''}\otimes\mathcal{H}^{\ell''}
\]
where the sum is over all laminations~$\ell''$. By taking graded dimensions, we should recover the expansion of the product $\mathbb{I}(\ell)\cdot\mathbb{I}(\ell')$ as a linear combination of canonical basis elements, and the vector spaces $\mathcal{N}^{\ell,\ell'}_{\ell''}$ should categorify the structure constants in this expansion. This decomposition should be closely related to the categorified OPE algebra defined physically in Section~2.4 of~\cite{CordovaNeitzke} and to Question~1.5 in~\cite{Thurston}. Recently, Cautis and Williams constructed a similar monoidal categorification for a different choice of quiver with potential and discussed the relation to line defects~\cite{CautisWilliams}.

\subsection{Structure of the paper}

In Section~\ref{sec:ClusterVarietiesFromSurfaces}, we review the basic material on marked bordered surfaces and ideal triangulations from~\cite{FST} and the definition of the associated quivers from~\cite{L1}. We discuss moduli spaces of local systems and cluster coordinates following~\cite{FG1}. We then define cluster varieties and review some results we need from the theory of cluster algebras.

In Section~\ref{sec:TheCanonicalBasisConstruction}, we recall the notion of an integral lamination from~\cite{FG1,FG2}. We explain a method for calculating the monodromy of a framed local system around a closed loop, and we use this method to assign a canonical function to any lamination consisting of a single closed loop. We also associate a function to any lamination consisting of curves that end on the boundary of a surface, and we relate this function to $F$-polynomials in the theory of cluster algebras. Finally, we give the general definition of the canonical function associated to a lamination, and we prove Theorem~\ref{thm:introcanonicalbasis}.

In Section~\ref{sec:RepresentationsOfQuivers}, we review the basic representation theory of quivers with potential. We define the notions of strings and bands following~\cite{BrustleZhang} and the quiver Grassmannian of a representation. We then define band modules and relate them to the canonical functions associated to loops. We define string modules and explain the relationship with $F$-polynomials. We conclude with the proof of Theorem~\ref{thm:introcategorification1}.

In Section~\ref{sec:FramedQuiverModuli}, we discuss stability conditions on the abelian category of modules over the Jacobian algebra of a quiver with potential. We describe moduli spaces parametrizing cocyclic modules over the Jacobian algebra. We then establish the relationship to quiver Grassmannians and prove Theorem~\ref{thm:introcategorification2}.

\section{Cluster varieties from surfaces}
\label{sec:ClusterVarietiesFromSurfaces}

\subsection{Combinatorics of triangulations}

We begin by reviewing the basic material on triangulated surfaces. The treatment in this section is based on~\cite{FST}.

\begin{definition}
A \emph{marked bordered surface} (without punctures) is a pair $(\mathbb{S},\mathbb{M})$ where $\mathbb{S}$ is a compact, connected, oriented, smooth surface with boundary and $\mathbb{M}$ is a nonempty finite set of marked points on the boundary of~$\mathbb{S}$ such that each boundary component contains at least one marked point.
\end{definition}

In this paper, we will denote by $\Sigma=(\mathbb{S},\mathbb{M})$ a marked bordered surface.

\begin{definition}
An \emph{arc} on $\Sigma$ is a smooth path $\gamma$ in~$\mathbb{S}$ connecting points of~$\mathbb{M}$ whose interior lies in the interior of~$\mathbb{S}$ and which has no self-intersections in its interior. We also require that $\gamma$ is not homotopic, relative to its endpoints, to a single point or to a path in~$\partial\mathbb{S}$ whose interior contains no marked points. A path that connects two marked points and lies entirely on the boundary of~$\mathbb{S}$ without passing through a third marked point is called a \emph{boundary segment}.
\end{definition}

\begin{definition}
Two arcs are considered to be equivalent if they are related by a homotopy through arcs. Two arcs are \emph{compatible} if there exist arcs in their respective equivalence classes which do not intersect in the interior of~$\mathbb{S}$. An \emph{ideal triangulation} of $\Sigma$ is a maximal collection of pairwise compatible arcs on~$\Sigma$.
\end{definition}

When talking about an ideal triangulation of $\Sigma$, we will always fix a collection of representatives for its arcs so that no two arcs intersect in the interior of~$\mathbb{S}$.

\begin{definition}
A \emph{triangle} of an ideal triangulation $T$ is defined to be the closure in~$\mathbb{S}$ of a connected component of the complement of all arcs of~$T$. By an \emph{edge} of an ideal triangulation, we mean an arc of the triangulation or a boundary segment.
\end{definition}

There is an integer matrix encoding the combinatorics of an ideal triangulation~$T$ of a marked bordered surface. To define this matrix, we consider for each triangle $t$ and each pair of edges $i$,~$j$ of $T$, a number $\varepsilon_{ij}^t$ defined by the following rules:
\begin{enumerate}
\item $\varepsilon_{ij}^t=+1$ if $i$ and $j$ are sides of $t$ with $j$ following $i$ in the counterclockwise order defined by the orientation.
\item $\varepsilon_{ij}^t=-1$ if the same holds with the clockwise order.
\item $\varepsilon_{ij}^t=0$ otherwise.
\end{enumerate}
We then have the following definition.

\begin{definition}
The \emph{signed adjacency matrix} associated to $T$ is the matrix with rows and columns indexed by the edges of $T$ whose $(i,j)$ entry $\varepsilon_{ij}=\varepsilon_{ij}^T$ is defined as 
\[
\varepsilon_{ij}=\sum_t\varepsilon_{ij}^t,
\]
where the sum runs over all triangles of~$T$.
\end{definition}

The following is an elementary move for passing between different ideal triangulations.

\begin{definition}
A \emph{flip} of an ideal triangulation $T$ at an arc $\gamma$ of $T$ is the transformation that removes $\gamma$ and replaces it by the unique different arc that, together with the remaining arcs, forms a new ideal triangulation (see Figure~\ref{fig:flip}).
\end{definition}

\begin{figure}[ht]
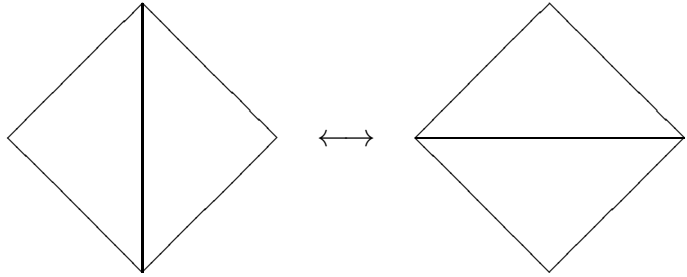

\begin{center}
\[
\xy /l1.5pc/:
{\xypolygon4"A"{~:{(2,2):}}};
{\xypolygon4"B"{~:{(2.5,0):}~>{}}};
{\xypolygon4"C"{~:{(0.8,0.8):}~>{}}};
{"A1"\PATH~={**@{-}}'"A3"};
\endxy
\quad
\longleftrightarrow
\quad
\xy /l1.5pc/:
{\xypolygon4"A"{~:{(2,2):}}};
{\xypolygon4"B"{~:{(2.5,0):}~>{}}};
{\xypolygon4"C"{~:{(0.8,0.8):}~>{}}};
{"A2"\PATH~={**@{-}}'"A4"};
\endxy
\]
\end{center}
\caption{A flip of a triangulation.\label{fig:flip}}
\end{figure}

The following fact is well known.

\begin{proposition}[\cite{FST}, Proposition~3.8]
\label{prop:flipelementary}
Any two ideal triangulations of a marked bordered surface are related by a sequence of flips.
\end{proposition}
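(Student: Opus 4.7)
The plan is to prove the proposition by induction on the intersection number of the two triangulations. Given ideal triangulations $T$ and $T'$ of $\Sigma$, I would first fix representatives for the arcs in minimal position. A convenient way is to equip $\mathbb{S}$ with a hyperbolic metric making $\partial\mathbb{S}$ totally geodesic and take each arc to be the unique geodesic representative of its homotopy class (relative to $\mathbb{M}$); then any two such arcs realize the minimal geometric intersection number. Define $I(T,T')$ to be the total number of transversal intersection points between arcs of $T$ and arcs of $T'$.

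The base case is $I(T,T') = 0$. Then every arc of $T'$ is disjoint from every arc of $T$, so in particular compatible with every arc of $T$. Since $T$ is maximal among pairwise compatible collections, each arc of $T'$ is equivalent to an arc of $T$, and by maximality of $T'$ we conclude $T = T'$, at which point no flips are required.

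For the inductive step, assume $I(T,T') > 0$ and pick an arc $\beta \in T'$ that crosses some arc of $T$. Parameterize $\beta$ starting from one endpoint on $\partial\mathbb{S}$ and let $\alpha$ be the first arc of $T$ that $\beta$ meets. The initial segment of $\beta$ lies inside a single triangle $t_1$ of $T$ incident to the chosen endpoint, crosses the side $\alpha$ of $t_1$, and enters the adjacent triangle $t_2$; together $t_1 \cup t_2$ is an immersed quadrilateral $Q$ with diagonal $\alpha$. Let $T''$ be obtained from $T$ by the flip at $\alpha$, replacing it by the other diagonal $\alpha'$ of $Q$. The choice of $\alpha$ as the first crossing means that the segment of $\beta$ traversing $Q$ can be isotoped rel endpoints off of $\alpha'$, so one intersection of $\beta$ with the arcs of $T$ is destroyed. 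Combined with the inductive hypothesis applied to $T''$ and $T'$, a strict decrease $I(T'',T') < I(T,T')$ finishes the argument.

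The main obstacle is verifying that the flip $\alpha \mapsto \alpha'$ does not introduce spurious new intersections with other arcs $\beta' \in T'$. This is precisely where the geodesic representatives pay off: since $\alpha'$ is a geodesic, it simultaneously realizes the minimal intersection number with every other geodesic $\beta'$, so $|\alpha' \cap \beta'|$ is bounded above by the number of times $\beta'$ enters and leaves the region $Q$ through sides other than $\alpha$, a quantity controlled by the original intersections $|\alpha \cap \beta'|$ and the boundary contributions of $Q$. An elementary case analysis (or equivalently, a direct application of the bigon criterion) then gives the required inequality $I(T'',T') \leq I(T,T') - 1$, completing the induction.
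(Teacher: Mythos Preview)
The paper does not prove this proposition; it is cited from \cite{FST}, which in turn attributes the statement to earlier work of Mosher and Hatcher without giving a self-contained argument. Your strategy---induction on the total geometric intersection number $I(T,T')$, flipping the first arc $\alpha$ of $T$ crossed by a chosen $\beta\in T'\setminus T$---is the classical one, and the base case is correct.

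The inductive step, however, has a genuine gap. You need $\sum_{\beta'\in T'}i(\alpha',\beta')<\sum_{\beta'\in T'}i(\alpha,\beta')$, and the bound you invoke (that $|\alpha'\cap\beta'|$ is at most the number of crossings of $\beta'$ with $\partial Q$) does not give this: those boundary crossings are intersections with \emph{other} arcs of $T$, unchanged by the flip, and they say nothing about the sign of $i(\alpha',\beta')-i(\alpha,\beta')$. Concretely, label $Q=p_1p_2p_3p_4$ with $\alpha=p_1p_3$, $\alpha'=p_2p_4$, and $\beta$ starting at $v=p_2$. Your first-segment observation only blocks strands cutting off the corner $p_2$; strands of $\beta$ (or of any other $\beta'\in T'$) that cut off the \emph{opposite} corner $p_4$---entering through side $p_3p_4$ and leaving through side $p_4p_1$---cross $\alpha'$ but not $\alpha$, and nothing in your argument bounds their number. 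For instance, if $\beta$ crosses $\alpha$ once, exits $Q$ through $p_3p_4$, winds around a handle of $\mathbb{S}$, re-enters $Q$ through $p_3p_4$ nearer $p_4$, and exits through $p_4p_1$, then $i(\alpha,\beta)=1$ while the $p_4$-cutting strand forces $i(\alpha',\beta)\ge 1$, so this flip does not reduce even $i(T,\beta)$. The ``elementary case analysis'' you defer to must confront this phenomenon, and for the global quantity $I(T,T')$ it is not clear it can. A standard repair is to change the inductive scheme: induct primarily on $|T'\setminus T|$, and for a fixed $\beta\in T'\setminus T$ run an inner induction on $i(T,\beta)$ with a more carefully chosen flip; alternatively one can bypass intersection counts entirely via Hatcher's contractibility of the arc complex, which yields connectivity of the flip graph directly.
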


In this paper, we will be interested in a certain quiver associated to an ideal triangulation of a marked bordered surface.

\begin{definition}
A \emph{quiver} is a directed graph. It consists of a finite set $Q_0$ (the set of \emph{vertices}), a finite set $Q_1$ (the set of \emph{arrows}), and maps $s:Q_1\rightarrow Q_0$ and $t:Q_1\rightarrow Q_0$ taking an arrow to its \emph{source} and \emph{target}, respectively. We typically display an arrow diagrammatically as 
\[
\xymatrix{
s(a) \ar[rr]^{a} & & t(a).
}
\]
A \emph{loop} in a quiver $Q$ is an arrow $a$ whose source and target coincide. A \emph{2-cycle} is a pair of distinct arrows $a$ and $b$ such that the target of $a$ is the source of $b$ and vice versa. A quiver is said to be \emph{2-acyclic} if it has no loops or 2-cycles.
\end{definition}

Given an ideal triangulation $T$ of the marked bordered surface $\Sigma$, we get a quiver $Q(T)$ in a natural way. This quiver has as its vertex set the set of all arcs of~$T$, and there are $\varepsilon_{ij}$ arrows from~$i$ to~$j$ whenever $\varepsilon_{ij}>0$. Note that this quiver is 2-acyclic since the signed adjacency matrix $\varepsilon_{ij}$ is skew-symmetric. Figure~\ref{fig:quivertriangulation} in the introduction shows an example of $Q(T)$ in the case where $T$ is an ideal triangulation of a disk with eight marked points on its boundary.

The following is a natural operation on quivers.

\begin{definition}
Let $k$ be a vertex of a 2-acyclic quiver $Q$. Then we define a new quiver $\mu_k(Q)$, called the quiver obtained by \emph{mutation} in the direction~$k$, as follows.
\begin{enumerate}
\item For each pair of arrows $i\rightarrow k\rightarrow j$, add a new arrow $i\rightarrow j$.
\item Reverse all arrows incident to~$k$.
\item Remove the arrows from a maximal set of pairwise disjoint 2-cycles.
\end{enumerate}
\end{definition}

The last item in this definition means, for example, that we replace the diagram $\xymatrix{i \ar@< 4pt>[r] \ar@< -4pt>[r] & j \ar[l]}$ by $\xymatrix{i \ar[r] & j}$. Thus if $Q$ is a 2-acyclic quiver, then $\mu_k(Q)$ will again be 2-acyclic. The operation of mutation is an involution on such quivers in the sense that $\mu_k\circ\mu_k(Q)\cong Q$.

The following is easy to check using the definition of mutation.

\begin{proposition}
\label{prop:flipmutation}
If $T$ and $T'$ are ideal triangulations of $\Sigma$ such that $T'$ is obtained from~$T$ by a flip at some arc~$k$, then $\mu_k(Q(T))=Q(T')$.
\end{proposition}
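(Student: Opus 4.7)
The plan is to verify directly that the signed adjacency matrices satisfy $\mu_k(\varepsilon^T) = \varepsilon^{T'}$, which is equivalent to the claimed quiver identity since the signed adjacency matrix and the associated $2$-acyclic quiver encode the same combinatorial data. The key observation is that both the flip and matrix mutation are local operations. The arc $k$ belongs to exactly two triangles $t_1, t_2$ of $T$, and the flip removes these and installs two new triangles $t_1', t_2'$ of $T'$ sharing the new diagonal $k'$; all other triangles are unchanged. Since $\varepsilon^T = \sum_t \varepsilon^t$ decomposes additively over triangles, one has $\varepsilon^{T'} - \varepsilon^T = (\varepsilon^{t_1'} + \varepsilon^{t_2'}) - (\varepsilon^{t_1} + \varepsilon^{t_2})$. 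On the other side, $\mu_k(\varepsilon^T)_{ij}$ depends only on $\varepsilon^T_{ij}$, $\varepsilon^T_{ik}$, and $\varepsilon^T_{kj}$, and the latter two entries receive contributions only from $t_1$ and $t_2$. So the whole identity reduces to a finite local check within the quadrilateral $t_1 \cup t_2$.

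I would then set up the local picture: label the outer edges of the quadrilateral $a, b, c, d$ so that $t_1$ has sides $(k, a, b)$ and $t_2$ has sides $(k, c, d)$ in counterclockwise cyclic order, and the flipped triangles are $t_1' = (k', a, d)$ and $t_2' = (k', b, c)$. Read the nontrivial entries of $\varepsilon^{t_1} + \varepsilon^{t_2}$ and $\varepsilon^{t_1'} + \varepsilon^{t_2'}$ off directly from this cyclic order, and compare with the mutation formula entry by entry. The sign flip on the $k$-row and $k$-column matches the fact that $k'$ plays the opposite orientation role with respect to each of its neighbors; and the correction term $\tfrac{1}{2}\bigl(|\varepsilon^T_{ik}|\varepsilon^T_{kj} + \varepsilon^T_{ik}|\varepsilon^T_{kj}|\bigr)$ is nonzero exactly when $i$ and $j$ are the two non-$k$ sides of one of $t_1, t_2$ with compatible orientations, and it precisely removes the pre-flip triangle contributions on the pairs $\{a,b\}$ and $\{c,d\}$ while installing the post-flip contributions on $\{a,d\}$ and $\{b,c\}$.

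The main obstacle is the non-generic case where the outer edges $a, b, c, d$ are not all distinct, as can happen on a surface of nontrivial topology when two sides of the quadrilateral get identified. In such a case a single entry of $\varepsilon^T$ may collect contributions from several triangle slots, and the mutation correction term may likewise pick up several summands. I would dispose of this by observing that the generic check is really a triangle-by-triangle identity: both sides of the desired equality depend additively on the local contributions $\varepsilon^{t_1}, \varepsilon^{t_2}, \varepsilon^{t_1'}, \varepsilon^{t_2'}$, and the mutation formula depends on $\varepsilon^T$ only through entries of the form $\varepsilon^T_{ik}, \varepsilon^T_{kj}$ that are sums of these local contributions. Hence the identity survives any pattern of edge identifications, and the generic verification suffices.
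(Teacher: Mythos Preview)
Your proposal is correct and spells out in detail the direct verification that the paper leaves implicit (the paper simply states the result is ``easy to check using the definition of mutation'' and gives no further argument). Your reduction to a local computation inside the quadrilateral, together with the observation that only the two triangles containing $k$ contribute to $\varepsilon^T_{ik}$ and $\varepsilon^T_{kj}$, is exactly the standard approach.

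One small point: your handling of the degenerate case via ``additivity'' is not quite airtight as stated, since the mutation correction term $\tfrac{1}{2}\bigl(|\varepsilon^T_{ik}|\,\varepsilon^T_{kj} + \varepsilon^T_{ik}\,|\varepsilon^T_{kj}|\bigr)$ involves absolute values and is therefore not linear in the triangle-by-triangle contributions. In the unpunctured setting of this paper, however, the only possible identifications among the outer edges of the flip quadrilateral are $a=c$ or $b=d$ (any other identification would force a self-folded triangle before or after the flip, hence a puncture), and in these cases the two local contributions to $\varepsilon^T_{ik}$ (respectively $\varepsilon^T_{kj}$) always carry the same sign, so the absolute value does split additively and your argument goes through. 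Alternatively one can simply check these two degenerate configurations by hand.
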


\subsection{Moduli spaces of local systems}

We now define the moduli space of framed $PGL_2$-local systems. Let $\Sigma=(\mathbb{S},\mathbb{M})$ be a marked bordered surface without punctures, and let $\mathcal{L}$ be a $PGL_2$-local system on~$\mathbb{S}$, that is, a principal $PGL_2$-bundle equipped with a flat connection. Since the group $PGL_2$ has a natural left action on~$\mathbb{P}^1$, we can form the associated bundle 
\[
\mathcal{L}_{\mathbb{P}^1}\coloneqq\mathcal{L}\times_{PGL_2}\mathbb{P}^1.
\]
For the next definition, let us fix a point $x_i$ in the interior of every boundary segment~$i$ and denote by $\partial^\circ\mathbb{S}=\partial\mathbb{S}\setminus\{x_i\}$ the boundary of $\mathbb{S}$ punctured at these points.

\begin{definition}[\cite{FG1}]
A \emph{framing} for a $PGL_2$-local system $\mathcal{L}$ on~$\mathbb{S}$ is defined as a flat section of the restriction of~$\mathcal{L}_{\mathbb{P}^1}$ to the punctured boundary $\partial^\circ\mathbb{S}$. A \emph{framed $PGL_2$-local system} on~$\Sigma$ is a $PGL_2$-local system on~$\mathbb{S}$ together with a framing.
\end{definition}

An isomorphism of framed local systems $(\mathcal{L}_1,s_1)$ and $(\mathcal{L}_2,s_2)$ is an isomorphism $\mathcal{L}_1\rightarrow\mathcal{L}_2$ of the underlying local systems such that the induced map on $\mathbb{P}^1$-bundles sends the flat section $s_1$ to $s_2$.

\begin{definition}
For any marked bordered surface $\Sigma$, we write $\mathcal{X}_{\Sigma,PGL_2}$ for the moduli stack parametrizing isomorphism classes of framed $PGL_2$-local systems on~$\Sigma$.
\end{definition}

We will also consider a moduli space parametrizing twisted $SL_2$-local systems equipped with a decoration. Let $T'\mathbb{S}$ be the punctured tangent bundle of~$\mathbb{S}$, that is, the tangent bundle with the zero section removed. For any point $y\in\mathbb{S}$, we have $T_y\mathbb{S}\cong\mathbb{R}^2$. Thus $T_y'\mathbb{S}=T_y\mathbb{S}-\{0\}$ has fundamental group 
\[
\pi_1(T_y'\mathbb{S},x)\cong\mathbb{Z}
\]
for any choice of basepoint $x\in T_y'\mathbb{S}$. Let $\sigma_{\mathbb{S}}$ denote a generator of this fundamental group. It is well defined up to a sign. By abuse of notation, we will also write $\sigma_{\mathbb{S}}$ for the image of this generator under the inclusion $\pi_1(T_y'\mathbb{S},x)\hookrightarrow \pi_1(T'\mathbb{S},x)$. The group $\pi_1(T'\mathbb{S},x)$ fits into a short exact sequence 
\[
\xymatrix{ 
1 \ar[r] & \mathbb{Z} \ar[r] & \pi_1(T'\mathbb{S},x) \ar[r] & \pi_1(\mathbb{S},y) \ar[r] & 1
}
\]
where the group $\mathbb{Z}$ is identified with the central subgroup of $\pi_1(T'\mathbb{S},x)$ generated by~$\sigma_{\mathbb{S}}$.

\begin{definition}
A \emph{twisted $SL_2$-local system} $\mathcal{L}$ on $\mathbb{S}$ is an $SL_2$-local system on the punctured tangent bundle $T'\mathbb{S}$ with monodromy $-1$ around $\sigma_{\mathbb{S}}$.
\end{definition}

Let $\widehat{\mathcal{L}}$ be a twisted $SL_2$-local system on~$\mathbb{S}$. The space $\mathbb{A}^2\setminus\{0\}$ is identified with the \emph{affine flag variety} for the group~$SL_2$ (see \cite{FG1} for a more general discussion). Since the group $SL_2$ has a natural left action on $\mathbb{A}^2\setminus\{0\}$, we have the associated bundle 
\[
\widehat{\mathcal{L}}_{\mathbb{A}^2\setminus\{0\}}=\widehat{\mathcal{L}}\times_{SL_2}(\mathbb{A}^2\setminus\{0\}).
\]
The punctured tangent bundle $T'\mathbb{S}$ has a section over the punctured boundary $\partial^\circ\mathbb{S}$ defined by outward pointing tangent vectors at points of~$\partial^\circ\mathbb{S}$. In the following definition, we will denote this section by $\widehat{\partial^\circ\mathbb{S}}$.

\begin{definition}
A \emph{decoration} of a twisted $SL_2$-local system $\widehat{\mathcal{L}}$ is a flat section of the restriction of $\widehat{\mathcal{L}}_{\mathbb{A}^2\setminus\{0\}}$ to the lifted punctured boundary $\widehat{\partial^\circ\mathbb{S}}$. A \emph{decorated twisted $SL_2$-local system} on $\Sigma$ is a twisted $SL_2$-local system on $T'\mathbb{S}$ together with a decoration.
\end{definition}

As in the case of framed local systems, an isomorphism of decorated twisted $SL_2$-local systems is an isomorphism of the underlying local systems that preserves the decorations.

\begin{definition}
For any marked bordered surface $\Sigma$, we write $\mathcal{A}_{\Sigma,SL_2}$ for the moduli stack parametrizing isomorphism classes of decorated twisted $SL_2$-local systems on~$\Sigma$.
\end{definition}

There is a natural map 
\begin{align}
\label{eqn:pmap}
p:\mathcal{A}_{\Sigma,SL_2}\rightarrow\mathcal{X}_{\Sigma,PGL_2}
\end{align}
sending a decorated twisted $SL_2$-local system $(\widehat{\mathcal{L}},\widehat{s})$ to a framed $PGL_2$-local system $p(\widehat{\mathcal{L}},\widehat{s})=(\mathcal{L},s)$ defined as follows. To define the local system $\mathcal{L}$, we simply take the pushforward of~$\widehat{\mathcal{L}}$ which gives a $PGL_2$-local system on~$\mathbb{S}$. By definition, there is a map $\mathbb{A}^2\setminus\{0\}\rightarrow\mathbb{P}^1$ sending a nonzero vector to the line containing it. It follows that there is a map of sheaves from the pushforward of  $\widehat{\mathcal{L}}_{\mathbb{A}^2\setminus\{0\}}$ to $\mathcal{L}_{\mathbb{P}^1}$, and we define $s$ to be the image of~$\widehat{s}$ under this map.

We now discuss certain functions on the moduli space $\mathcal{A}_{\Sigma,SL_2}$. To define them, let $i$ be an arc or boundary segment on~$\Sigma$, with endpoints $p_1$,~$p_2\in\mathbb{M}$. The section $\widehat{\partial^\circ\mathbb{S}}$ described above determines an outward pointing tangent vector $u_k$ at the point $p_k$ for $k=1,2$, and there is a canonical homotopy class of paths in $T'\mathbb{S}$ connecting $u_2$ to~$u_1$. Indeed, let $\mathbb{D}$ be the disk defined as a small neighborhood of the edge $i$ having the points $p_1$ and $p_2$ on its boundary, and let $\gamma$ be the path in $T'\mathbb{S}$ defined by dragging the vector $u_2$ along the boundary of $\mathbb{D}$ in the counterclockwise direction, always pointing out, until it coincides with~$u_1$.

Let $\Bbbk$ be a field and $(\widehat{\mathcal{L}},\widehat{s})$ a general $\Bbbk$-point of the moduli space $\mathcal{A}_{\Sigma,SL_2}$. Then the decoration $\widehat{s}$ determines a vector in the fiber over $u_k$ for $k=1,2$. Using the flat connection, we can parallel transport along $\gamma$ to get a pair of vectors $v_1$,~$v_2$ in the fiber over some chosen point.

\begin{definition}
\label{def:Acoordinate}
The \emph{cluster coordinate} associated to $i$ is given by 
\[
A_i=\omega(v_1\wedge v_2)
\]
where $\omega$ is a fixed volume form preserved by the $SL_2$ action on the fiber of~$\widehat{\mathcal{L}}$.
\end{definition}

Since we consider twisted local systems, it is possible to show that this definition is independent of the labeling of the points~$p_1$ and~$p_2$.

In particular, if $T$ is an ideal triangulation of~$\Sigma$, then the above construction defines a collection of rational functions $A_i:\mathcal{A}_{\Sigma,SL_2}\dashrightarrow\mathbb{G}_m$ indexed by the edges of~$T$. In fact, we have the following statement.

\begin{proposition}[\cite{FG1}]
\label{prop:Abirational}
Let $\Sigma$ be a marked bordered surface. Then for any ideal triangulation~$T$ of~$\Sigma$, the functions $A_i$ provide a birational map 
\[
\mathcal{A}_{\Sigma,SL_2}\dashrightarrow(\mathbb{G}_m)^I
\]
where $I$ is the set of edges of $T$.
\end{proposition}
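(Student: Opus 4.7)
The plan is to exhibit an explicit rational inverse to the map defined by the cluster coordinates. First I would check that the two spaces have the same dimension: for any ideal triangulation $T$, Euler characteristic considerations show that the number of edges $|I|$ is the same, and a direct parameter count (three decoration vectors per triangle modulo the diagonal $SL_2$ action, glued along common edges) shows $\dim\mathcal{A}_{\Sigma,SL_2}=|I|$ on the generic locus. Hence it suffices to construct a map $(\mathbb{G}_m)^I\dashrightarrow\mathcal{A}_{\Sigma,SL_2}$ which is generically inverse to the one defined by the $A_i$.

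The core step is the local model on a single triangle $t$. Since a triangle is simply connected, a twisted $SL_2$-local system on a neighborhood of $t$ in $T'\mathbb{S}$ is trivialized once we fix a path-lifting into $T'\mathbb{S}$ of the outward tangent vectors at the three corners. A decoration then amounts to a choice of three vectors $v_1,v_2,v_3\in\mathbb{A}^2\setminus\{0\}$, one at each corner, and two such triples give isomorphic decorated local systems precisely when related by a common $SL_2$ transformation. The three coordinates $A_i$ attached to the edges of $t$ are, by Definition~\ref{def:Acoordinate}, the three pairwise determinants $\omega(v_j\wedge v_k)$. An elementary linear algebra argument shows that for any triple $(a_1,a_2,a_3)\in(\mathbb{G}_m)^3$ there is a unique $SL_2$-orbit of triples $(v_1,v_2,v_3)$ realizing these determinants, so the local model gives a bijection between isomorphism classes of decorated twisted $SL_2$-local systems on $t$ and $(\mathbb{G}_m)^3$.

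Next I would glue these local models along the edges of $T$. Given a tuple $(a_i)\in(\mathbb{G}_m)^I$, pick for each triangle $t$ a representative triple of vectors with the prescribed pairwise determinants. For each internal edge $e$ shared by two triangles $t$ and $t'$, the two pairs of corner vectors produced at the endpoints of $e$ have the same pairwise determinants (both equal to $a_e$), so there is a unique element of $SL_2$ identifying the pair on $t'$ with the pair on $t$; using this element as the transition function along $e$ defines a decorated twisted $SL_2$-local system on $\Sigma$. By construction, applying the $A$-coordinate map recovers the tuple $(a_i)$, and conversely starting from a generic decorated twisted $SL_2$-local system and running this procedure returns the original object up to isomorphism. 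Since any two ideal triangulations are related by flips by Proposition~\ref{prop:flipelementary}, and the $A$-coordinates transform rationally under a flip (as follows from the Ptolemy relation obtained by applying the local triangle model to a quadrilateral), the statement is independent of the chosen triangulation.

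The main obstacle will be bookkeeping the twist. Because we work with twisted local systems on $T'\mathbb{S}$ rather than ordinary local systems on $\mathbb{S}$, each gluing transition depends on the homotopy class of the chosen lift of the gluing path to $T'\mathbb{S}$, and the monodromy around the fiber generator $\sigma_{\mathbb{S}}$ must equal $-1$. One must check that the convention of dragging the outward tangent vector counterclockwise, used in Definition~\ref{def:Acoordinate}, is compatible across adjacent triangles so that the glued transition functions collectively produce the required central $-1$ monodromy, and that the resulting object is independent of the sign ambiguity in the choice of $\sigma_{\mathbb{S}}$. A secondary technical point is that $\mathcal{A}_{\Sigma,SL_2}$ is a stack, not a variety; however, on the locus where all $A_i$ are nonzero the only automorphisms of a decorated twisted local system are trivial, so the map is a genuine birational isomorphism onto its image after passing to the coarse moduli space.
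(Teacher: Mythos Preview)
The paper does not prove this proposition: it is stated with the attribution \cite{FG1} and no proof is given, so there is nothing to compare against directly. Your sketch is essentially the standard Fock--Goncharov reconstruction argument from that reference---build the decorated twisted local system triangle by triangle from the prescribed $\Delta$-invariants and glue---and the local step (that the map from $SL_2$-orbits of generic triples in $\mathbb{A}^2\setminus\{0\}$ to triples of pairwise determinants is a bijection onto $(\mathbb{G}_m)^3$) is correct.

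Two small comments. First, the dimension count at the start is unnecessary and in fact slightly misleading: once you have produced a rational inverse, equality of dimensions is automatic, and the parameter count you describe is more delicate than it needs to be (one must be careful not to double-count decorations at marked points shared by several triangles). Second, the independence-of-triangulation remark at the end is a red herring: the proposition asserts birationality for \emph{each} fixed~$T$, so flip-invariance is irrelevant to the proof. The real work, as you correctly identify, is the twist bookkeeping: you should make explicit that the counterclockwise lifting convention forces the transition functions to compose to $-1$ around~$\sigma_{\mathbb{S}}$, since without this the glued object is an ordinary rather than a twisted local system and the $A_i$ would only be defined up to sign.
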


To describe the transition maps relating these coordinate charts for different ideal triangulations, it suffices to consider two ideal triangulations $T$ and~$T'$ related by a flip at an arc~$k$. In this case, the set $I$ of edges of~$T$ is naturally in bijection with the set $I'$ of edges of~$T'$, and thus we can use the construction described above to associate a coordinate $A_i'$ to each $i\in I'=I$.

\begin{proposition}
The coordinates $A_i'$ are given in terms of the coordinates $A_i$~($i\in I$) by the formula 
\begin{align}
\label{eqn:Atransformation}
A_i' = 
\begin{cases}
\frac{\prod_{\varepsilon_{kj}>0}A_j^{\varepsilon_{kj}}+\prod_{\varepsilon_{kj}<0}A_j^{-\varepsilon_{kj}}}{A_k} & \text{if $i=k$} \\
A_i & \text{if $i\neq k$}.
\end{cases}
\end{align}
\end{proposition}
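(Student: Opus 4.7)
The plan is to reduce the identity to the classical Plücker relation on the fiber of the twisted $SL_2$-local system. First, for $i \neq k$, observe that the coordinate $A_i$ is defined using only the decoration vectors at the endpoints of $i$ and the canonical path $\gamma$ in a tubular neighborhood of $i$; since a flip at $k$ leaves $i$ and a neighborhood of $i$ untouched, we immediately get $A_i' = A_i$.

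For $i = k$, I would work inside the quadrilateral $Q$ obtained by gluing the two triangles of $T$ that share $k$. Label its sides cyclically $e_1, e_2, e_3, e_4$, so that $e_1, e_2$ are the sides of one triangle and $e_3, e_4$ are the sides of the other, with vertices $p_1,\dots,p_4$ arranged so that $k$ goes from $p_1$ to $p_3$ and the flipped arc $k'$ goes from $p_2$ to $p_4$. Lifting to $T'\mathbb{S}$ via the outward tangent section, the decoration gives a vector at each $p_\alpha$; after parallel transporting all four along the canonical paths to a single fiber of $\widehat{\mathcal{L}}_{\mathbb{A}^2\setminus\{0\}}$, we obtain $v_1, v_2, v_3, v_4 \in \Bbbk^2$. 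By Definition~\ref{def:Acoordinate} applied to each edge (and each diagonal of $Q$), we can express
\[
A_k = \omega(v_1 \wedge v_3), \quad A_{k'} = \omega(v_2 \wedge v_4), \quad A_{e_\alpha} = \omega(v_\alpha \wedge v_{\alpha+1})
\]
up to overall signs coming from the twisting, where indices are taken mod~$4$. The Plücker relation in two dimensions then gives
\[
\omega(v_1 \wedge v_3)\,\omega(v_2 \wedge v_4) = \omega(v_1 \wedge v_2)\,\omega(v_3 \wedge v_4) + \omega(v_2 \wedge v_3)\,\omega(v_1 \wedge v_4),
\]
which translates to $A_k A_{k'} = A_{e_1} A_{e_3} + A_{e_2} A_{e_4}$, and solving for $A_{k'} = A_k'$ produces the claimed formula.

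It remains to match the two pairs of opposite sides with the sign pattern of $\varepsilon_{kj}$. Going through the definition of $\varepsilon_{ij}^t$: in the triangle containing $k, e_1, e_2$, exactly one of $e_1, e_2$ follows $k$ counterclockwise, contributing $+1$ to $\varepsilon_{k,\cdot}$, and the other precedes it, contributing $-1$; the analogous statement holds in the other triangle. Since opposite sides of the quadrilateral (namely $e_1, e_3$ and $e_2, e_4$) sit on the same rotational side of $k$, they contribute with the same sign, so one pair aggregates to $\{j : \varepsilon_{kj} > 0\}$ and the other to $\{j : \varepsilon_{kj} < 0\}$, with the correct multiplicities.

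The main obstacle I anticipate is twofold. First, the twisted nature of $\widehat{\mathcal{L}}$ (monodromy $-1$ around $\sigma_\mathbb{S}$) introduces sign ambiguities in each $A_\alpha$; one must show that when parallel transporting all four vectors to a common fiber along the canonical tangent-framed paths, the accumulated signs cancel so that the Plücker identity comes out with a plus sign on the right and the Ptolemy relation holds on the nose rather than up to $\pm 1$. This is exactly the reason the construction used the punctured tangent bundle in the first place, and keeping track of the framing curves carefully is where care is needed. Second, in higher-genus or multiply-connected cases one or both triangles may have self-identifications, so that two of the $e_\alpha$ coincide; then $\varepsilon_{kj}$ takes the value $\pm 2$ and one of the $A_j$ appears squared, and one must verify that the Plücker computation still outputs the formula as written, now with exponents $\varepsilon_{kj}$ accounted for by the coincidences.
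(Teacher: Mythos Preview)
The paper does not actually supply a proof of this proposition; it is stated as a known fact, implicitly deferring to~\cite{FG1} where the Ptolemy/Pl\"ucker argument is carried out. Your proposal is exactly that standard argument: restrict to the quadrilateral, transport the four decoration vectors to a common fiber, and invoke the two-dimensional Pl\"ucker relation. The outline is correct, and the two caveats you flag (tracking the $-1$ twist so the signs come out right, and handling edge identifications so that the exponents $\varepsilon_{kj}=\pm2$ appear) are precisely the points that require care in a full write-up; neither is an obstruction, only bookkeeping. So there is nothing to compare---your approach \emph{is} the argument the paper is tacitly citing.
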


We can now easily define coordinates on the moduli space $\mathcal{X}_{\Sigma,PGL_2}$ of framed local systems. As before, we let $T$ be an ideal triangulation of~$\Sigma$ and we choose a general $\Bbbk$-point $(\mathcal{L},s)$ of the moduli space $\mathcal{X}_{\Sigma,PGL_2}$. If $j$ is any arc of~$T$, then we consider the quadrilateral $q$ formed by the two triangles that share the edge~$j$. This quadrilateral $q$ is naturally a marked bordered surface, and we get a framed local system on~$q$ by restriction of $(\mathcal{L},s)$. Choose a decorated twisted local system on~$q$ that is mapped to this framed local system by~$p$, and let $A_i$ be its coordinates.

\begin{definition}
The \emph{cluster coordinate} associated to the arc $j$ is given by 
\[
X_j=\prod_iA_i^{\varepsilon_{ji}}.
\]
\end{definition}

One can check that this definition is independent of the choice of decorated twisted local system. 

In this way, we define a collection of rational functions $X_j:\mathcal{X}_{\Sigma,PGL_2}\dashrightarrow\mathbb{G}_m$ indexed by the arcs of an ideal triangulation. In fact, we have the following statement.

\begin{proposition}[\cite{FG1}]
\label{prop:Xbirational}
Let $\Sigma$ be a marked bordered surface. Then for any ideal triangulation~$T$ of~$\Sigma$, the functions $X_j$ provide a birational map 
\[
\mathcal{X}_{\Sigma,PGL_2}\dashrightarrow(\mathbb{G}_m)^J
\]
where $J$ is the set of arcs of $T$.
\end{proposition}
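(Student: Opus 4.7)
The plan is to derive Proposition~\ref{prop:Xbirational} from Proposition~\ref{prop:Abirational} by realizing $\mathcal{X}_{\Sigma,PGL_2}$ as a torus quotient of $\mathcal{A}_{\Sigma,SL_2}$ along the map $p$ of~\eqref{eqn:pmap} and identifying the $X_j$ as a generating set of invariants. First I would analyze the fibers of $p$. Since the projection $\mathbb{A}^2\setminus\{0\}\to\mathbb{P}^1$ has fiber $\mathbb{G}_m$, and a decoration (resp.\ framing) amounts to a flat section of the associated bundle over each component of $\widehat{\partial^\circ\mathbb{S}}$ (resp.\ $\partial^\circ\mathbb{S}$), with components canonically in bijection with the marked points, rescaling each decoration independently defines an action of the torus $T_{\mathbb{M}}=(\mathbb{G}_m)^{\mathbb{M}}$ on $\mathcal{A}_{\Sigma,SL_2}$ whose generic orbits are the fibers of $p$. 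From Definition~\ref{def:Acoordinate} and bilinearity of the volume form $\omega$, the induced action on the $A$-coordinates is $A_i\mapsto \lambda_{p_1}\lambda_{p_2}A_i$ for the edge $i$ with endpoints $p_1,p_2$.

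Next I would check that each $X_j=\prod_i A_i^{\varepsilon_{ji}}$ is $T_{\mathbb{M}}$-invariant, which simultaneously justifies the independence of the decorated lift in the definition of $X_j$ and shows that $X_j$ descends to a rational function on $\mathcal{X}_{\Sigma,PGL_2}$. This reduces to the combinatorial identity
\[
\sum_{i\ni m}\varepsilon_{ji}=0
\]
for every arc $j$ of $T$ and marked point $m$. Writing $\varepsilon_{ji}=\sum_t\varepsilon_{ji}^t$ and swapping the order of summation, only triangles $t$ having $j$ as a side and $m$ as a vertex can contribute. If $m$ is the apex of $t$ opposite $j$, the two other sides of $t$ are both incident to $m$ and contribute $+1$ and $-1$, cancelling within $t$. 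If $m$ is an endpoint of $j$, each of the two triangles containing $j$ contributes a single $\pm 1$ from the third edge at $m$, and the reversal of induced cyclic order across $j$ forces these two contributions to cancel between triangles.

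Finally I would pass to the quotient to establish birationality. Proposition~\ref{prop:Abirational} identifies $\mathcal{A}_{\Sigma,SL_2}$ birationally with $(\mathbb{G}_m)^I$, and since the boundary segments are in bijection with $\mathbb{M}$ one has $|I|=|J|+|\mathbb{M}|$. The $T_{\mathbb{M}}$-action on $(\mathbb{G}_m)^I$ is generically free because the weight map $\mathbb{Z}^{\mathbb{M}}\to\mathbb{Z}^I$ sending $m\mapsto\sum_{i\ni m}\mathbf{e}_i$ is injective, as one sees by restricting to a spanning subgraph of the incidence bipartite graph of marked points and edges; hence the quotient torus has dimension $|J|$. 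The remaining step, which I expect to be the main obstacle, is to show that the $X_j$ are multiplicatively independent characters trivial on $T_{\mathbb{M}}$ and generate the full character lattice of the quotient torus; equivalently, that the matrix $(\varepsilon_{ji})$ with rows indexed by arcs induces a surjection onto the cokernel of the weight map. I would approach this combinatorial rank statement inductively via flips, using Proposition~\ref{prop:flipelementary} together with the fact that the $A$-transformation rule~\eqref{eqn:Atransformation} is intertwined by the $X_j$ with a known mutation rule on the target torus; this transfers the property across triangulations and reduces the verification to a small base case that can be checked directly, for example on a single quadrilateral with a diagonal. Combining these steps, Proposition~\ref{prop:Abirational} descends under the torus quotient to the desired birational map $\mathcal{X}_{\Sigma,PGL_2}\dashrightarrow(\mathbb{G}_m)^J$.
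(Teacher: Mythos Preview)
The paper does not supply its own proof of this proposition; it is simply quoted from~\cite{FG1}. There is therefore nothing in the paper to compare your argument against, and your outline should be read as an attempt at an independent proof.

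As such, it has two real gaps. First, your assertion that the generic fibers of $p$ are exactly the $T_{\mathbb{M}}$-orbits is not correct in general. The passage from twisted $SL_2$-local systems to $PGL_2$-local systems carries a residual finite ambiguity governed by $H^1(\mathbb{S};\mathbb{Z}/2\mathbb{Z})$, so whenever $\mathbb{S}$ is not simply connected the generic fiber of $p$ is $T_{\mathbb{M}}$ times a nontrivial finite group. For example, on an annulus with one marked point on each boundary component one finds an extra $\mathbb{Z}/2\mathbb{Z}$. A matching discrepancy appears on the other side: the kernel of the monomial map $(\mathbb{G}_m)^I\to(\mathbb{G}_m)^J$ given by $(\varepsilon_{ji})$ is strictly larger than the image of $T_{\mathbb{M}}$ in that example. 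Showing that these two finite discrepancies agree is precisely the content of the birationality statement, and your outline does not address it.

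Second, your proposed induction on flips cannot produce the base case you name. Flips preserve the underlying marked bordered surface, so starting from an arbitrary $\Sigma$ one never reaches ``a single quadrilateral with a diagonal''. An induction via flips can transport the statement between triangulations of the \emph{same} $\Sigma$ (because the $\mathcal{X}$-mutation is itself birational), but it does not reduce the problem to a simpler surface; you still owe a direct argument for at least one triangulation of every $\Sigma$, which is the full statement. The combinatorial rank claim about $(\varepsilon_{ji})$ therefore needs a different proof, for instance a direct lattice computation on a fixed triangulation rather than a reduction by flips.
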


Consider two ideal triangulations $T$ and~$T'$ related by a flip at an arc~$k$. Then the set~$J$ of arcs of~$T$ is naturally in bijection with the set $J'$ of arcs of~$T'$, and we can associate a coordinate $X_j'$ to each $j\in J'=J$.

\begin{proposition}
The coordinates $X_j'$ are given in terms of the coordinates $X_j$~($j\in J$) by the formula 
\[
X_j' = 
\begin{cases}
X_k^{-1} & \text{if $j=k$} \\
X_j{(1+X_k^{-\sgn(\varepsilon_{jk})})}^{-\varepsilon_{jk}} & \text{if $j\neq k$}.
\end{cases}
\]
\end{proposition}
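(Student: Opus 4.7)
The plan is to reduce the claim to a direct computation from the definition $X_j=\prod_i A_i^{\varepsilon_{ji}}$, using the transformation law~\eqref{eqn:Atransformation} for the $A$-coordinates under a flip together with the matrix mutation rule for $\varepsilon$ implied by Proposition~\ref{prop:flipmutation}. Write $\varepsilon'=\varepsilon^{T'}$ for the signed adjacency matrix after the flip. Translating the rule on arrows in $Q(T)$ into integer entries yields $\varepsilon'_{ki}=-\varepsilon_{ki}$ and, for $j,i\neq k$,
\[
\varepsilon'_{ji} \;=\; \varepsilon_{ji} + \tfrac{1}{2}\bigl(\varepsilon_{jk}\lvert\varepsilon_{ki}\rvert + \lvert\varepsilon_{jk}\rvert\varepsilon_{ki}\bigr).
\]

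First I would dispose of the case $j=k$. Since $A'_i=A_i$ for every $i\neq k$ by~\eqref{eqn:Atransformation} and $\varepsilon'_{kk}=0$, one has at once
\[
X'_k \;=\; \prod_{i\neq k}(A'_i)^{\varepsilon'_{ki}} \;=\; \prod_{i\neq k} A_i^{-\varepsilon_{ki}} \;=\; X_k^{-1}.
\]

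For $j\neq k$, the only $A$-coordinate that changes is $A'_k$, so I would isolate its contribution and write $X'_j=(A'_k)^{-\varepsilon_{jk}}\prod_{i\neq k}A_i^{\varepsilon'_{ji}}$. Let $M_+=\prod_{\varepsilon_{ki}>0}A_i^{\varepsilon_{ki}}$ and $M_-=\prod_{\varepsilon_{ki}<0}A_i^{-\varepsilon_{ki}}$, so that $M_+/M_-=X_k$; then~\eqref{eqn:Atransformation} gives $(A'_k)^{-\varepsilon_{jk}}=A_k^{\varepsilon_{jk}}(M_++M_-)^{-\varepsilon_{jk}}$. The key manipulation is to factor out $M_+$ when $\varepsilon_{jk}>0$ and $M_-$ when $\varepsilon_{jk}<0$: either factoring produces the binomial factor $\bigl(1+X_k^{-\sgn(\varepsilon_{jk})}\bigr)^{-\varepsilon_{jk}}$ on the nose, leaving behind a monomial in $A_k$ and in the $A_i$ with $i\neq k$.

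What remains is to verify that this leftover monomial, multiplied by $\prod_{i\neq k}A_i^{\varepsilon'_{ji}}$, reproduces $X_j=\prod_i A_i^{\varepsilon_{ji}}$. This reduces to an exponent-matching identity for each $i\neq k$ which, after a four-fold case analysis on the signs of $\varepsilon_{jk}$ and $\varepsilon_{ki}$, is exactly the matrix mutation rule stated above. The only real obstacle is this sign bookkeeping; the cleanest way to organize it is through the equivalent form
\[
\varepsilon'_{ji} \;=\; \varepsilon_{ji} + [\varepsilon_{jk}]_+[\varepsilon_{ki}]_+ - [-\varepsilon_{jk}]_+[-\varepsilon_{ki}]_+
\]
of the mutation rule (with $[x]_+=\max(x,0)$), whose two product terms correspond respectively to the $M_+$-factoring in the case $\varepsilon_{jk}>0$ and the $M_-$-factoring in the case $\varepsilon_{jk}<0$. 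Once this identity is checked, the formula for $X'_j$ follows by assembling the factors.
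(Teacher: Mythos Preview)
Your argument is correct and is the standard derivation of the $X$-mutation rule from the $A$-mutation rule together with matrix mutation. The paper, however, does not prove this proposition at all: it is stated as a known fact from Fock and Goncharov's work, in the same way as the preceding proposition on the $A$-transformation. So there is nothing to compare against; you have simply supplied a proof where the paper quotes one.

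Two small remarks on presentation. First, your computation implicitly uses that the relation $X_j=\prod_i A_i^{\varepsilon_{ji}}$ holds globally as an identity of rational functions pulled back along $p:\mathcal{A}_{\Sigma,SL_2}\to\mathcal{X}_{\Sigma,PGL_2}$, not merely on a single quadrilateral as in the paper's definition; this is harmless because $\varepsilon_{ji}$ vanishes outside the quadrilateral around $j$, but it would be cleaner to say so. Second, the case $\varepsilon_{jk}=0$ deserves a word: there the binomial factor has exponent $0$ so the formula reduces to $X'_j=X_j$, and on your side $\varepsilon'_{ji}=\varepsilon_{ji}$ for all $i$, so both sides agree trivially. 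With those remarks the proof is complete.
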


\subsection{Construction of cluster varieties}

We now define cluster varieties associated to a marked bordered surface. In addition to the cluster Poisson variety~$\mathcal{X}$ described in the introduction, it will useful to consider another cluster variety denoted~$\mathcal{A}$ in the works of Fock and Goncharov. The latter is closely related to Fomin and Zelevinsky's notion of cluster algebra~\cite{FZIV}.

\begin{definition}
An \emph{ice quiver} is a 2-acyclic quiver together with a distinguished collection of vertices called \emph{frozen vertices}. A vertex which is not frozen is called \emph{mutable}.
\end{definition}

For any ideal triangulation $T$ of the marked bordered surface $\Sigma$, we get an ice quiver whose vertices are the edges of~$T$ with $\varepsilon_{ij}$ arrows from~$i$ to~$j$ whenever $\varepsilon_{ij}>0$. The frozen vertices of this quiver are the boundary segments of~$\Sigma$. Figure~\ref{fig:icequiver} shows an example where $\Sigma$ is a disk with six marked points on its boundary.

\begin{figure}[ht]
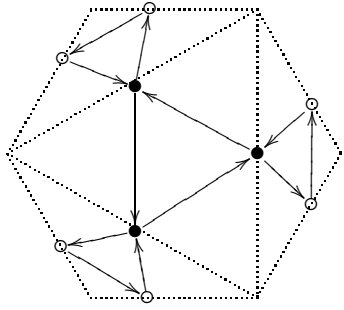
 \begin{center}
\[
\xy /l1.75pc/:
{\xypolygon6"A"{~:{(3,0):}~>{.}}},
{\xypolygon9"B"{~:{(2.3,1.3):}~>{}}},
{"A1"\PATH~={**@{.}}'"A3"'},
{"A1"\PATH~={**@{.}}'"A5"'},
{"A3"\PATH~={**@{.}}'"A5"'},
(-0.5,0)*{\bullet}="a", 
(1.7,-1.2)*{\bullet}="b",
(1.7,1.4)*{\bullet}="c",
"B1"*{\circ}="d", 
"B2"*{\circ}="e", 
"B4"*{\circ}="f", 
"B5"*{\circ}="g", 
"B7"*{\circ}="h", 
"B8"*{\circ}="i", 
\xygraph{
"a":"b",
"b":"c",
"c":"a",
"c":"d",
"d":"e",
"e":"c",
"a":"f",
"f":"g",
"g":"a",
"b":"h",
"h":"i",
"i":"b",
}
\endxy
\]
\caption{An ice quiver with mutable vertices $\bullet$ and frozen vertices $\circ$.\label{fig:icequiver}}
\end{center} \end{figure}

Suppose $Q$ is an ice quiver. Let $I$ denote the set of all vertices of~$Q$ and $J\subseteq I$ the subset of mutable vertices. Let $\mathbb{T}_n$ denote an $n$-regular tree where $n=|J|$. We can label the edges of~$\mathbb{T}_n$ by elements of $J$ in such a way that the $n$ edges emanating from any vertex have distinct labels. Choose a vertex $t_0$ of $\mathbb{T}_n$ and associate the quiver $Q=Q(t_0)$ to this vertex. We associate a quiver $Q(t)$ to every other vertex $t$ of $\mathbb{T}_n$ in such a way that if two vertices are connected by an edge labeled $k$, then the quivers associated to these vertices are related by a mutation in the direction~$k$.

Now for any vertex $t$ of the tree $\mathbb{T}_n$, we have the algebraic tori 
\[
\mathcal{A}_t=(\mathbb{G}_m)^I, \quad \mathcal{X}_t=(\mathbb{G}_m)^J.
\]
We also have a matrix $\varepsilon_{ij}=\varepsilon_{ij}^{(t)}$ defined by 
\begin{align}
\label{eqn:adjacency}
\varepsilon_{ij}=|\{\text{arrows from $i$ to $j$ in $Q(t)$}\}|-|\{\text{arrows from $j$ to $i$ in $Q(t)$}\}|.
\end{align}
If $t$ and $t'$ are vertices of $\mathbb{T}_n$ connected by an edge labeled $k$, then there are birational maps 
\[
\mu_k:\mathcal{X}_t\dashrightarrow\mathcal{X}_{t'}, \quad \mu_k:\mathcal{A}_t\dashrightarrow\mathcal{A}_{t'}.
\]
Abusing notation, we denote them both by~$\mu_k$. If $A_i$ and $X_j$ are the natural coordinates on~$\mathcal{A}_t$ and $\mathcal{X}_t$, respectively, and we write $A_i'$ and $X_j'$ for the similar coordinates on $\mathcal{A}_{t'}$ and~$\mathcal{X}_{t'}$, then these maps are defined by 
\[
\mu_k^*(A_i') = 
\begin{cases}
\frac{\prod_{\varepsilon_{kj}>0}A_j^{\varepsilon_{kj}}+\prod_{\varepsilon_{kj}<0}A_j^{-\varepsilon_{kj}}}{A_k} & \text{if $i=k$} \\
A_i & \text{if $i\neq k$}
\end{cases}
\]
and
\[
\mu_k^*(X_j') = 
\begin{cases}
X_k^{-1} & \text{if $j=k$} \\
X_j{(1+X_k^{-\sgn(\varepsilon_{jk})})}^{-\varepsilon_{jk}} & \text{if $j\neq k$}.
\end{cases}
\]
More generally, if $t$ and $t'$ are any vertices of $\mathbb{T}_n$, there is a unique simple path from $t$ to~$t'$. By composing the maps $\mu_k$ in order along the path connecting $t$ and $t'$, we obtain birational maps $\mathcal{A}_t\dashrightarrow\mathcal{A}_{t'}$ and $\mathcal{X}_t\dashrightarrow\mathcal{X}_{t'}$. 

\begin{lemma}[\cite{GHK}, Proposition~2.4]
Let $\{Z_i\}$ be a collection of integral separated schemes of finite type over~$\mathbb{Q}$ and suppose we have birational maps $f_{ij}:Z_i\dashrightarrow Z_j$ for all $i$, $j$ such that $f_{ii}$ is the identity and $f_{jk}\circ f_{ij}=f_{ik}$ as rational maps. Let $U_{ij}$ be the largest open subset of $Z_i$ such that $f_{ij}:U_{ij}\rightarrow f_{ij}(U_{ij})$ is an isomorphism. Then there is a scheme obtained by gluing the $Z_i$ along the open sets $U_{ij}$ using the maps $f_{ij}$.
\end{lemma}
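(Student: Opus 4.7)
The plan is to verify the hypotheses of the standard scheme-theoretic gluing lemma applied to the data $(Z_i, U_{ij}, f_{ij}|_{U_{ij}})$. Recall that this lemma says: given schemes $Z_i$ equipped with open subsets $V_{ij} \subset Z_i$ and isomorphisms $\varphi_{ij}\colon V_{ij} \to V_{ji}$ satisfying $\varphi_{ii} = \operatorname{id}$, $\varphi_{ij}(V_{ij} \cap V_{ik}) = V_{ji} \cap V_{jk}$, and $\varphi_{jk} \circ \varphi_{ij} = \varphi_{ik}$ on the triple overlaps, one can glue them to a scheme $Z$ containing each $Z_i$ as an open subscheme. Thus the task reduces to checking that the $U_{ij}$ defined in the statement do satisfy these conditions.

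First I would show that each $U_{ij}$ is a well-defined open subscheme and that $f_{ij}$ restricts to an isomorphism $U_{ij} \xrightarrow{\sim} U_{ji}$. Since each $Z_i$ is integral, the domain of definition of the rational map $f_{ij}$ is an open subscheme $D_{ij} \subset Z_i$ on which $f_{ij}$ is an honest morphism; within $D_{ij}$, the locus where $f_{ij}$ is an open immersion is open, and this is precisely $U_{ij}$. The cocycle relation $f_{ji} \circ f_{ij} = f_{ii} = \operatorname{id}_{Z_i}$ as rational maps, together with the separatedness of $Z_i$ (which forces two morphisms from a reduced scheme to a separated scheme that agree on a dense open to agree wherever both are defined), implies that on the open subscheme where both sides are defined as morphisms they literally coincide. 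Hence on $U_{ij}$ the morphism $f_{ji}$ provides a two-sided inverse to $f_{ij}$, realizing $f_{ij}(U_{ij})$ as an open subset of $Z_j$ on which $f_{ji}$ is an open immersion. By the maximality built into the definition of $U_{ji}$, this yields $f_{ij}(U_{ij}) \subseteq U_{ji}$; running the same argument with $i$ and $j$ swapped gives the reverse inclusion, so $f_{ij}\colon U_{ij} \xrightarrow{\sim} U_{ji}$.

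The triple overlap condition $f_{ij}(U_{ij} \cap U_{ik}) = U_{ji} \cap U_{jk}$ is the crux. Given $p \in U_{ij} \cap U_{ik}$, I would choose an affine neighborhood on which the rational equality $f_{jk} \circ f_{ij} = f_{ik}$ promotes, by separatedness again, to a genuine equality of morphisms; on this neighborhood $f_{jk}$ therefore equals $f_{ik} \circ (f_{ij}|_{U_{ij}})^{-1}$ near $f_{ij}(p)$. Since both factors on the right are open immersions onto open subsets of the target, so is $f_{jk}$ in a neighborhood of $f_{ij}(p)$, which forces $f_{ij}(p) \in U_{jk}$. Symmetry (applied to the triple $f_{ji}$, $f_{ik}$, $f_{jk}$) gives the reverse inclusion. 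Once these overlaps are identified, the morphism-level cocycle $f_{jk} \circ f_{ij} = f_{ik}$ on $U_{ij} \cap U_{ik}$ follows by the same extension-by-separatedness argument.

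The main obstacle is precisely this interface between rational maps and actual morphisms: rational maps only agree on dense opens, but the gluing lemma requires strict equalities of honest morphisms on specified open subschemes. All of the work is therefore in exploiting separatedness (together with the integrality that gives each $Z_i$ a well-defined function field on which the rational cocycle takes place) to upgrade the given rational identities to scheme-theoretic ones. Once the four conditions above are verified, the glued scheme is constructed in the standard way as the quotient of $\coprod_i Z_i$ by the equivalence relation generated by the $f_{ij}$, and one checks the universal property by descending morphisms from each $Z_i$; finite type over $\mathbb{Q}$ plays no essential role in the gluing itself but ensures that $Z$ inherits the qualitative properties needed downstream.
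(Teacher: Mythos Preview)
The paper does not give its own proof of this lemma: it is quoted verbatim as \cite{GHK}, Proposition~2.4, and used as a black box to glue the tori $\mathcal{A}_t$ and $\mathcal{X}_t$. So there is no argument in the paper to compare yours against.

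That said, your outline is the standard one and is essentially correct. You have correctly identified that the work lies in upgrading the rational cocycle $f_{jk}\circ f_{ij}=f_{ik}$ to honest equalities of morphisms on the relevant opens, and that separatedness plus integrality is exactly what makes this possible. One point worth tightening: when you argue that $f_{ij}(U_{ij})$ is open in $Z_j$, you should be explicit that this follows because $f_{ji}$ is defined as a morphism on an open containing $f_{ij}(U_{ij})$ and furnishes a local inverse there, so $f_{ij}$ is \'etale (indeed an open immersion) on $U_{ij}$; simply saying ``$f_{ij}:U_{ij}\to f_{ij}(U_{ij})$ is an isomorphism'' does not by itself force the image to be open. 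Once that is nailed down, the maximality argument giving $f_{ij}(U_{ij})=U_{ji}$ and the triple-overlap check go through as you describe.
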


Using this lemma, we can glue the tori defined above to get a scheme.

\begin{definition}
Let $Q$ be an ice quiver. Then the \emph{cluster $K_2$-variety} associated to~$Q$ is the scheme obtained by gluing the tori $\mathcal{A}_t$ for all vertices $t$ of~$\mathbb{T}_n$ using the above birational maps. The \emph{cluster Poisson variety} is the scheme obtained by gluing the tori $\mathcal{X}_t$ for all vertices $t$ of~$\mathbb{T}_n$ using the above maps.
\end{definition}

As the names suggest, the cluster $K_2$-variety is equipped with a canonical class in $K_2$ of its function field, while the cluster Poisson variety has a canonical Poisson structure.

By Propositions~\ref{prop:flipmutation} and~\ref{prop:flipelementary}, we see that there is a cluster $K_2$-variety $\mathcal{A}$ and a cluster Poisson variety~$\mathcal{X}$ canonically associated to a marked bordered surface $\Sigma$. Note that the birational maps used to glue the tori in the definition of these cluster varieties are the transition maps for coordinates on the moduli space $\mathcal{A}_{\Sigma,SL_2}$ and $\mathcal{X}_{\Sigma,PGL_2}$, respectively. Combining this observation with Propositions~\ref{prop:Abirational} and~\ref{prop:Xbirational}, we obtain the following statement.

\begin{proposition}
There exist canonical birational maps 
\[
\mathcal{A}_{\Sigma,SL_2}\dashrightarrow\mathcal{A}, \quad \mathcal{X}_{\Sigma,PGL_2}\dashrightarrow\mathcal{X}.
\]
\end{proposition}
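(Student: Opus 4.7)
The plan is to assemble the proposition from the four ingredients already in hand: Propositions~\ref{prop:Abirational} and~\ref{prop:Xbirational}, which give coordinate identifications $\mathcal{A}_{\Sigma,SL_2}\dashrightarrow(\mathbb{G}_m)^I$ and $\mathcal{X}_{\Sigma,PGL_2}\dashrightarrow(\mathbb{G}_m)^J$ for each ideal triangulation $T$; the explicit flip formula~\eqref{eqn:Atransformation} (and its $X$-analog) relating these identifications under a flip; Proposition~\ref{prop:flipmutation}, which says a flip of the triangulation is precisely a quiver mutation; and Proposition~\ref{prop:flipelementary}, which says any two ideal triangulations are connected by a sequence of flips.

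First I would fix an arbitrary ideal triangulation $T_0$ of $\Sigma$ and associate its ice quiver $Q(T_0)$ to a chosen vertex $t_0$ of the mutation tree $\mathbb{T}_n$. Proposition~\ref{prop:Abirational} provides a birational isomorphism $\varphi_{T_0}^{\mathcal{A}}\colon\mathcal{A}_{\Sigma,SL_2}\dashrightarrow\mathcal{A}_{t_0}$ defined by the cluster coordinates $(A_i)_{i\in I}$, and Proposition~\ref{prop:Xbirational} likewise provides $\varphi_{T_0}^{\mathcal{X}}\colon\mathcal{X}_{\Sigma,PGL_2}\dashrightarrow\mathcal{X}_{t_0}$. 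Composing with the tautological open immersions of the seed tori $\mathcal{A}_{t_0}\hookrightarrow\mathcal{A}$ and $\mathcal{X}_{t_0}\hookrightarrow\mathcal{X}$ produces candidate birational maps to the two cluster varieties.

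Next I would check that the maps so produced do not depend on the initial choice of~$T_0$. It suffices, by Proposition~\ref{prop:flipelementary}, to compare the choices arising from two triangulations $T_0$ and $T_0'$ that differ by a flip at an arc~$k$. On the one hand, the rational coordinates on $\mathcal{A}_{\Sigma,SL_2}$ (respectively $\mathcal{X}_{\Sigma,PGL_2}$) attached to these two triangulations are related by formula~\eqref{eqn:Atransformation} (respectively by the $X$-flip formula). On the other hand, by Proposition~\ref{prop:flipmutation}, the quivers $Q(T_0)$ and $Q(T_0')$ are related by the mutation $\mu_k$, so the birational gluing maps $\mu_k\colon\mathcal{A}_{t_0}\dashrightarrow\mathcal{A}_{t_0'}$ and $\mu_k\colon\mathcal{X}_{t_0}\dashrightarrow\mathcal{X}_{t_0'}$ used in the very definition of $\mathcal{A}$ and~$\mathcal{X}$ are given by those same formulas. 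Hence $\varphi_{T_0}$ and $\varphi_{T_0'}$ glue to the same birational map into $\mathcal{A}$ (respectively $\mathcal{X}$), and the desired canonical map is obtained.

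The main obstacle is really just bookkeeping rather than a substantive difficulty: one must match the labeling of edges of $T$ with the labeling of edges of $\mathbb{T}_n$ so that a flip at the arc $k$ genuinely corresponds to traversing the edge of $\mathbb{T}_n$ labeled by the vertex of $Q(T_0)$ dual to~$k$. Once this identification is made, the comparison of the coordinate transition formulas with the mutation formulas defining the gluing is a term-by-term match, and the existence and canonicity of the birational maps follow immediately.
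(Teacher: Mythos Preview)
Your proposal is correct and follows essentially the same approach as the paper: the paper observes that the gluing birational maps in the definition of $\mathcal{A}$ and $\mathcal{X}$ coincide with the coordinate transition maps on $\mathcal{A}_{\Sigma,SL_2}$ and $\mathcal{X}_{\Sigma,PGL_2}$ under a flip, and then invokes Propositions~\ref{prop:Abirational} and~\ref{prop:Xbirational} together with Propositions~\ref{prop:flipmutation} and~\ref{prop:flipelementary}. Your write-up is simply a more explicit unpacking of this same argument.
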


\subsection{Cluster algebras and $F$-polynomials}

In this subsection, we briefly recall some results from the theory of cluster algebras. This material will be used later to study the canonical basis. Throughout this subsection, we fix a positive integer $m$ and write $\mathcal{F}$ for a field isomorphic to the field of rational functions in $m$ independent variables with coefficients in~$\mathbb{Q}$.

\begin{definition}
By a \emph{seed} we mean an ice quiver $Q$ with $m$ vertices together with an $m$-tuple $(A_i)_{i\in Q_0}$ of elements of~$\mathcal{F}$, such that the $A_i$ are algebraically independent over~$\mathbb{Q}$ and $\mathcal{F}=\mathbb{Q}(A_i:i\in Q_0)$. The $A_i$ are called \emph{cluster variables} and the tuple $(A_i)_{i\in Q_0}$ is called a \emph{cluster}.
\end{definition}

Typically, we number the vertices of $Q$ from~1 to~$m$ so that a cluster can be written as an $m$-tuple $(A_1,\dots,A_m)$. We choose this numbering in such a way that the mutable vertices of~$Q$ correspond to $1,\dots,n$.

\begin{definition}
\label{def:seedmutation}
Let $(Q,(A_1,\dots,A_m))$ be a seed, and let $k$ be a mutable vertex of~$Q$. Then we define a new seed $(Q',(A_1',\dots,A_m'))$ called the seed obtained by \emph{mutation} in the direction~$k$. Here $Q'=\mu_k(Q)$ and the cluster variables $A_1',\dots,A_m'$ are given by the formula~\eqref{eqn:Atransformation} where the matrix $\varepsilon_{ij}$ in this formula is defined by replacing $Q(t)$ with $Q$ in~\eqref{eqn:adjacency}.
\end{definition}

As in the definition of the cluster varieties, we write $\mathbb{T}_n$ for the $n$-regular tree with edges labeled by the numbers $1,\dots,n$ in such a way that the $n$ edges emanating from any vertex have distinct labels. Let us assign a seed $\mathbf{s}_t$ to every vertex $t\in\mathbb{T}_n$ so that if $t$ and $t'$ are vertices connected by an edge labeled~$k$, then $\mathbf{s}_{t'}$ is obtained from $\mathbf{s}_t$ by a mutation in the direction~$k$. We use the notation 
\[
\mathbf{s}_t=(\varepsilon_t,(A_{1;t},\dots,A_{m;t})), \quad \varepsilon_t=(\varepsilon_{ij}^t)
\]
for the data of these seeds.

\begin{definition}
Consider the set 
\[
\mathcal{S}=\{A_{l;t}:1\leq l\leq n,t\in\mathbb{T}_n\}\cup\{A_{l;t}^{\pm1}:n+1\leq l\leq m,t\in\mathbb{T}_n\}.
\]
The \emph{cluster algebra} is defined as the $\mathbb{Z}$-subalgebra of~$\mathcal{F}$ generated by this set~$\mathcal{S}$. For each cluster $(A_{1;t},\dots,A_{m;t})$, we have the subring 
\[
\mathbb{Z}[A_{1;t}^{\pm1},\dots,A_{m;t}^{\pm1}]\subset\mathcal{F}
\]
of Laurent polynomials in the variables $A_{1;t},\dots,A_{m;t}$, and we define the \emph{upper cluster algebra} to be the intersection of these subrings for all $t\in\mathbb{T}_m$.
\end{definition}

In particular, we can take the quiver $Q$ in an initial seed to be the ice quiver associated to an ideal triangulation of a marked bordered surface $\Sigma=(\mathbb{S},\mathbb{M})$. Then we get an associated cluster algebra $\mathscr{A}$ and upper cluster algebra $\mathscr{U}$. By construction, $\mathscr{U}$ is isomorphic to the algebra of regular functions on the cluster $K_2$-variety $\mathcal{A}$. The following fact will be used in our proof of Theorem~\ref{thm:introcanonicalbasis}.

\begin{theorem}[\cite{Muller}, Corollary~11.5]
\label{thm:UequalsA}
If $|\mathbb{M}|>1$ then $\mathscr{U}=\mathscr{A}$.
\end{theorem}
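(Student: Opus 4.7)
The plan is to follow Muller's strategy: prove the stronger statement that the cluster algebra $\mathscr{A}$ is \emph{locally acyclic} and invoke the general principle that local acyclicity forces $\mathscr{U}=\mathscr{A}$. The inclusion $\mathscr{A}\subseteq\mathscr{U}$ is a direct consequence of the Laurent phenomenon of Fomin and Zelevinsky: every cluster variable $A_{l;t}$ lies in $\mathbb{Z}[A_{1;t'}^{\pm1},\dots,A_{m;t'}^{\pm1}]$ for every $t'\in\mathbb{T}_n$, so $\mathscr{A}\subseteq\bigcap_{t'}\mathbb{Z}[A_{i;t'}^{\pm1}]=\mathscr{U}$. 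The content of the theorem is the reverse inclusion $\mathscr{U}\subseteq\mathscr{A}$.

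I would proceed in three steps. First, recall the theorem of Berenstein--Fomin--Zelevinsky that whenever some seed has acyclic underlying quiver the equality $\mathscr{U}=\mathscr{A}$ holds; this is the base case of the induction. Second, prove that the property $\mathscr{U}=\mathscr{A}$ is preserved under a suitable notion of ``cover by cluster localizations'': if $x_1,\dots,x_r$ are cluster variables generating the unit ideal of $\mathscr{A}$, and if for each $i$ the localized algebra $\mathscr{A}[x_i^{-1}]$ (itself a cluster algebra obtained by freezing and inverting $x_i$) satisfies the equality, then so does $\mathscr{A}$. The proof rests on the sheaf behaviour of the upper cluster algebra under inversion of a cluster variable (which passes through the Laurent rings of all clusters) combined with faithfully flat descent for $\operatorname{Spec}\mathscr{A}[x_i^{-1}]\to\operatorname{Spec}\mathscr{A}$.

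Third, and most substantially, produce such a cover when $\Sigma$ comes from a marked bordered surface with $|\mathbb{M}|>1$. The key combinatorial input is the existence of ``boundary'' arcs: when there are at least two marked points on $\partial\mathbb{S}$, one can find arcs $\gamma$ that cut off disks with fewer marked points. Freezing and inverting the cluster variables $A_\gamma$ for a carefully chosen finite family of such arcs yields cluster algebras whose quivers become acyclic after a sequence of mutations, because the pieces obtained after cutting are of type $A$. One then verifies that the family $\{A_{\gamma_i}\}$ generates the unit ideal in $\mathscr{A}$, so that Step~2 applies and the acyclic covers of Step~1 assemble to give $\mathscr{U}=\mathscr{A}$.

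The main obstacle is the third step: both producing the family of boundary arcs whose localizations are acyclic, and verifying that they cover $\operatorname{Spec}\mathscr{A}$. The covering condition is essentially a positivity/non-vanishing statement and requires using the surface combinatorics (flips, triangle structure) in a delicate inductive way. The hypothesis $|\mathbb{M}|>1$ is essential precisely here: with a single marked point there are no boundary arcs available to start the cover, and one cannot reduce to acyclic pieces. The case $|\mathbb{M}|=1$ is genuinely different, and $\mathscr{A}\neq\mathscr{U}$ can in fact occur.
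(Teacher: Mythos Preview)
The paper does not give its own proof of this statement: it is quoted verbatim as Corollary~11.5 of Muller's paper and used as a black box in the proof of Proposition~\ref{prop:span}. There is therefore no argument in the paper to compare your proposal against.

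That said, your outline is a reasonable summary of Muller's strategy. A few points deserve correction if you want it to stand as an actual proof sketch. The covering is not by arcs that ``cut off disks'': Muller's induction cuts along arbitrary arcs, and the resulting pieces are surfaces of strictly smaller complexity (genus, number of boundary components, number of marked points), not type~$A$ polygons in general. The unit-ideal condition is handled not by a direct positivity argument but via Muller's notion of a \emph{covering pair}: two compatible cluster variables $x,y$ with no arrow between them in some seed, for which one checks algebraically that $(x,y)=\mathscr{A}$. For a surface with $|\mathbb{M}|\ge 2$ one exhibits two disjoint arcs forming such a pair, freezes each in turn, and applies the inductive hypothesis to the cut surfaces. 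The base cases are the small surfaces (disks, annuli with few marked points) where an acyclic seed is visible directly. Your identification of where the hypothesis $|\mathbb{M}|>1$ enters is correct: with a single marked point no such covering pair exists, and indeed $\mathscr{A}\subsetneq\mathscr{U}$ for the once-punctured torus analogue.
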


A fundamental problem in the theory of cluster algebras is to express an arbitrary cluster variable $A_{l;t}$ in terms of the variables from some chosen initial seed. In the examples that we consider, this problem is solved by the following theorem, which is obtained from Corollary~6.3 of~\cite{FZIV} by taking the semifield to be trivial.

\begin{theorem}
\label{thm:expressclustervariable}
Let $\mathbf{s}_{t_0}=(Q,(A_1,\dots,A_m))$ be the initial seed and assume $Q$ has no frozen vertices. Then there exists a polynomial $F_l=F_{l;t}^{\varepsilon;t_0}\in\mathbb{Z}[u_1,\dots,u_m]$ and an integral vector $\mathbf{g}_l=\mathbf{g}_{l;t}^{\varepsilon;t_0}\in\mathbb{Z}^m$ such that the cluster variable $A_{l;t}$ can be written 
\[
A_{l;t}=F_l\left(\prod_iA_i^{\varepsilon_{1i}},\dots,\prod_iA_i^{\varepsilon_{mi}}\right)A_1^{g_1}\dots A_m^{g_m}
\]
where $\mathbf{g}_l=(g_1,\dots,g_m)$.
\end{theorem}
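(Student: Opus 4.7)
The plan is to follow the approach of Fomin and Zelevinsky: first establish the result in the auxiliary setting of principal coefficients at $t_0$, and then specialize the coefficients to~$1$ to recover the statement about the trivial semifield.

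First, I enlarge the quiver $Q$ to an ice quiver $\widetilde{Q}$ by adjoining $m$ new frozen vertices $1',\dots,m'$ and a single arrow $i\to i'$ for each $i\in\{1,\dots,m\}$. The resulting initial seed is $\widetilde{\mathbf{s}}_{t_0}=(\widetilde{Q},(A_1,\dots,A_m,y_1,\dots,y_m))$, where $y_1,\dots,y_m$ are new algebraically independent generators attached to the frozen vertices. Applying Definition~\ref{def:seedmutation} along the unique path from $t_0$ to~$t$ produces elements $X_{l;t}$ in the corresponding principal-coefficient cluster algebra. By the Laurent phenomenon, each $X_{l;t}$ lies in $\mathbb{Z}[A_1^{\pm1},\dots,A_m^{\pm1},y_1,\dots,y_m]$; the $y_i$ appear with nonnegative exponents because the frozen variables are never inverted in the exchange rule.

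Next, I equip the ambient Laurent polynomial ring with a $\mathbb{Z}^m$-grading by setting $\deg A_i=\mathbf{e}_i$ and $\deg y_j=-\sum_i\varepsilon_{ij}\mathbf{e}_i$, where $\varepsilon=\varepsilon_{t_0}$. A direct calculation shows that the two monomials on the right-hand side of the exchange relation used at each step of the path have the same total degree once the $y$-monomials recorded by the frozen arrows of $\widetilde{Q}$ are included. Inducting on the path then shows that every $X_{l;t}$ is homogeneous, and I define $\mathbf{g}_l=(g_1,\dots,g_m)$ to be its degree. I define the $F$-polynomial $F_l(u_1,\dots,u_m)$ by substituting $A_1=\dots=A_m=1$ in $X_{l;t}$; because the $y_i$ are only ever multiplied in the exchange rule, a parallel induction shows that $F_l\in\mathbb{Z}[u_1,\dots,u_m]$.

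Finally, I combine these ingredients. Since $X_{l;t}$ is homogeneous of degree $\mathbf{g}_l$ and specializes to $F_l(y_1,\dots,y_m)$ when all $A_i=1$, each monomial of $X_{l;t}$ is uniquely recovered from a monomial of $F_l$ by restoring the factors in the $A_i$ needed to hit degree $\mathbf{g}_l$. Writing $\widehat{y}_j:=y_j\prod_i A_i^{\varepsilon_{ij}}$, whose $A$-degree exactly cancels the $y$-degree of $y_j$, I obtain the separation identity
\[
X_{l;t}=F_l(\widehat{y}_1,\dots,\widehat{y}_m)\,A_1^{g_1}\cdots A_m^{g_m}.
\]
Specializing $y_1=\dots=y_m=1$ passes from the principal-coefficient cluster algebra back to the original seed $\mathbf{s}_{t_0}$, sending $X_{l;t}$ to $A_{l;t}$ and $\widehat{y}_j$ to $\prod_iA_i^{\varepsilon_{ji}}$, which is the claimed formula. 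The main obstacle is the intermediate claim that $F_l$ is a genuine polynomial rather than only a Laurent polynomial in the $y_i$; this requires the careful tracking of $y$-exponents through successive exchange relations in the principal-coefficient setting, and is the core technical content of Proposition~3.6 of \cite{FZIV}.
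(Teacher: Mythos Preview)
The paper does not give its own proof of this statement: it is quoted as a known result, obtained from Corollary~6.3 of~\cite{FZIV} by taking the coefficient semifield to be trivial. Your sketch reproduces precisely the Fomin--Zelevinsky argument behind that corollary (principal coefficients, the $\mathbb{Z}^m$-grading and homogeneity giving the $\mathbf{g}$-vector, the $F$-polynomial via $A_i\mapsto 1$, the separation-of-additions identity, and finally the specialization $y_j\mapsto 1$), so your approach is correct and is exactly what the paper is implicitly invoking.

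One small slip worth fixing: with your definition $\widehat{y}_j=y_j\prod_i A_i^{\varepsilon_{ij}}$, setting $y_j=1$ yields $\prod_i A_i^{\varepsilon_{ij}}$, not $\prod_i A_i^{\varepsilon_{ji}}$ as you write in the last line. To match the form in the statement you should either orient the framing arrows $i'\to i$ in $\widetilde{Q}$ (so that $\deg y_j=-\sum_i\varepsilon_{ji}\mathbf{e}_i$ and $\widehat{y}_j=y_j\prod_i A_i^{\varepsilon_{ji}}$), or simply note that the two conventions differ by replacing $\varepsilon$ with $-\varepsilon$, which amounts to a harmless relabeling since $\varepsilon$ is skew-symmetric. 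This is purely a bookkeeping issue and does not affect the substance of the argument.
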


The polynomial $F_l=F_{l;t}^{\varepsilon;t_0}$ appearing in Theorem~\ref{thm:expressclustervariable} is called the \emph{$F$-polynomial} associated to the cluster variable $A_{l;t}$, and the integral vector $\mathbf{g}_l=\mathbf{g}_{l;t}^{\varepsilon;t_0}$ is called the \emph{$\mathbf{g}$-vector} associated to~$A_{l;t}$. They are important and well studied objects in the theory of cluster algebras.

\section{The canonical basis construction}
\label{sec:TheCanonicalBasisConstruction}

\subsection{Measured laminations}

We will be interested in a canonical basis for the algebra of regular functions on the cluster Poisson variety associated to a marked bordered surface. Geometrically, the elements of this canonical basis correspond to certain measured laminations on the surface~$\mathbb{S}$. We therefore begin by describing these objects.

In the following definition, when we talk about a \emph{curve} on~$\mathbb{S}$, we mean an embedding $C\rightarrow\mathbb{S}$ of a compact, connected, one-dimensional real manifold~$C$ with (possibly empty) boundary into~$\mathbb{S}$. We require that any endpoints of~$C$ map to points on the boundary of~$\mathbb{S}$ away from the marked points. When we talk about homotopies, we mean homotopies within the class of such curves. A curve is called \emph{special} if it cuts out a disk with a single marked point on its boundary. A curve is \emph{contractible} if it can be retracted to a point within this class of curves.

\begin{definition}[\cite{FG1,FG2}]
\label{def:integrallamination}
An \emph{integral lamination} on~$\Sigma=(\mathbb{S},\mathbb{M})$ is a collection of finitely many nonintersecting, noncontractible curves on~$\mathbb{S}$, either closed or ending on the boundary away from the marked points, with integral weights and subject to the following conditions:
\begin{enumerate}
\item The weight of a curve is nonnegative unless the curve is special.
\item If $e$ is a boundary segment, then the total weight of the curves ending on~$e$ vanishes.
\end{enumerate}
Moreover, we impose the following equivalence relations:
\begin{enumerate}
\item A lamination is equivalent to any lamination obtained by modifying its curves by homotopy.
\item A lamination containing a curve of weight zero is equivalent to the lamination with this curve removed.
\item A lamination containing homotopic curves of weights $a$ and $b$ is equivalent to the lamination with one curve removed and the weight $a+b$ on the other.
\end{enumerate}
\end{definition}

Suppose we are given an integral lamination $\ell$ on~$\Sigma$. If $T$ is an ideal triangulation of~$\Sigma$, then we can deform the curves of $\ell$ to get an equivalent lamination where each curve intersects the arcs of the triangulation transversely in the minimal number of points. Then for each arc $i$ of~$T$, we can define 
\[
a_i(\ell)=\frac{1}{2}\mu_i(\ell)
\]
where $\mu_i(\ell)$ is the total weight of the curves of the lamination that intersect~$i$.

\begin{definition}
Given the marked bordered surface $\Sigma$, we will write $\mathcal{A}(\mathbb{Z}^t)$ for the set of equivalence classes of laminations $\ell$ on $\Sigma$ such that the $a_i(\ell)$ are integers for any choice of triangulation.
\end{definition}

The peculiar notation used here is a reflection of the fact that $\mathcal{A}(\mathbb{Z}^t)$ can be understood as the set of points of the cluster $K_2$-variety $\mathcal{A}$ valued in the semifield $\mathbb{Z}^t$ of integers with the usual operations of addition and multiplication replaced by their ``tropical'' analogs $\max$ and $+$, respectively. Further details can be found in~\cite{FG1,FG2}.

\begin{proposition}[\cite{FG2}]
\label{prop:acoordinates}
For any choice of triangulation $T$, the functions $a_i$ provide a bijection 
\[
\mathcal{A}(\mathbb{Z}^t)\stackrel{\sim}{\longrightarrow}\mathbb{Z}^n
\]
where $n$ is the number of arcs in $T$.
\end{proposition}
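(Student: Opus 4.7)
The plan is to prove the proposition by a local analysis within each ideal triangle of $T$. The key observation is that, for a lamination in minimal transverse position with respect to $T$, the restriction of $\ell$ to any ideal triangle is determined up to homotopy by the intersection numbers with the three sides; global reconstruction from the $a_i(\ell)$ then follows by patching the local pictures compatibly along each shared arc.

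For injectivity, I would isotope the curves of $\ell$ so that they meet each arc of $T$ transversely in the minimum number of points. Within an ideal triangle $t$, the lamination then restricts to a finite collection of weighted simple arcs, each connecting two distinct sides of $t$; negatively-weighted arcs concentrated near a vertex $v$ correspond to special curves around $v$. Writing $\mu_s$ for the signed endpoint count of $\ell$ on a side $s$, one has $\mu_s = 2a_s(\ell)$ when $s$ is an internal arc and $\mu_s = 0$ when $s$ is a boundary segment (the latter by the weight-balance condition of Definition~\ref{def:integrallamination}, since a boundary segment belongs to only one triangle). If $\alpha_{jk}$ denotes the weighted count of arcs joining sides $s_j$ and $s_k$ in $t$, the linear system
\[
\alpha_{jk} + \alpha_{jl} = \mu_j \quad \text{for } \{j,k,l\}=\{1,2,3\}
\]
has the unique solution $\alpha_{jk} = (\mu_j + \mu_k - \mu_l)/2$. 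After consolidating parallel arcs and discarding those of weight zero, the canonical form in $t$ depends only on $(a_i(\ell))$, so two laminations with the same $a$-coordinates agree in every triangle and are therefore equivalent.

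For surjectivity, given $(d_i) \in \mathbb{Z}^n$, I would construct a lamination by reversing this analysis. In each ideal triangle $t$, set $\mu_s = 2d_s$ if $s$ is an internal arc and $\mu_s = 0$ otherwise, then compute $\alpha_{jk} = (\mu_j + \mu_k - \mu_l)/2 \in \mathbb{Z}$. If $\alpha_{jk} \geq 0$, draw $\alpha_{jk}$ pairwise disjoint arcs connecting $s_j$ and $s_k$ with unit weight; if $\alpha_{jk} < 0$, place a single special curve of weight $\alpha_{jk}$ around the vertex of $t$ opposite to $s_l$. Along each internal arc $i$, the two adjacent triangles produce $2d_i$ endpoints at matching transverse positions, so the segments glue unambiguously into closed curves or curves terminating on $\partial\mathbb{S}$. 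The weight-balance condition on each boundary segment $e$ then follows from $\alpha_{e,s_j} + \alpha_{e,s_k} = \mu_e = 0$.

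The main technical subtlety lies in the bookkeeping for special curves of negative weight: one must check that the recipe applied in different triangles sharing a common vertex produces consistent weights after consolidation, that no contractible loops are introduced by the gluing (and that any such loops are removed under the equivalence relation), and that the resulting object genuinely satisfies the axioms of Definition~\ref{def:integrallamination}. Once these routine verifications are completed, the two constructions above are mutually inverse, establishing the desired bijection. A pleasant side effect of this local viewpoint is that it also makes the transformation rules for the $a_i$-coordinates under a flip of $T$ transparent, recovering the tropical analogs of the formulas~\eqref{eqn:Atransformation}.
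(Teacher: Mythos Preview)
The paper does not prove this proposition; it is stated with a citation to~\cite{FG2} and no argument is given. Your sketch is precisely the standard reconstruction argument that appears in the original reference: put the lamination in minimal position, recover the weighted arc counts within each triangle via $\alpha_{jk}=(\mu_j+\mu_k-\mu_l)/2$, and glue along shared edges. So there is nothing in the paper to compare against, and your outline is the correct one.

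One remark on the surjectivity step: the corner-by-corner placement of special curves you describe does work, but the bookkeeping you flag as ``routine'' is slightly more delicate than your phrasing suggests, because pieces placed in adjacent triangles around a common marked point~$v$ need not carry the same weight and hence do not assemble directly into a single special curve around~$v$. The clean way to handle this (and the way Fock--Goncharov organize it) is global rather than local: adding a special curve of weight~$w$ around a marked point~$v$ shifts each~$\mu_i$ by~$w$ times the number of ends of~$i$ at~$v$, so one first subtracts enough special curves to arrange that all~$\alpha_{jk}\geq 0$ in every triangle, performs the unambiguous positive-weight gluing, and then adds the special curves back. This sidesteps the consistency check entirely. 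Either route yields the bijection.
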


\subsection{Loop functions}
\label{sec:loopfunctions}

Suppose $(\mathcal{L},s)$ is a general framed $PGL_2(\mathbb{C})$-local system on~$\Sigma$. Given an ideal triangulation~$T$ of $\Sigma$, let us label the arcs of~$T$ by the numbers $1,\dots,n$ and write $X_j\in\mathbb{C}^*$ for the coordinate of $(\mathcal{L},s)$ corresponding to the $j$th arc of~$T$. Let us choose a square root $X_j^{1/2}$ of~$X_j$ for each~$j$.

Suppose that $\ell$ is an integral lamination on $\Sigma$ consisting of a single closed curve $c$. By deforming the curve $c$ if necessary, we may assume that $c$ intersects each arc of the triangulation~$T$ transversely in the minimal number of points. Let $j_1,\dots,j_s$ be the arcs of $T$ that $c$ intersects, ordered according to some choice of orientation for the loop (so an arc may appear more than once on this list). After crossing the arc $j_k$, the curve $c$ enters a triangle~$t$ of~$T$ before leaving by crossing the next arc. If the curve $c$ turns to the left before leaving $t$, as depicted on the left hand side of Figure~\ref{fig:leftrightturn}, then we define 
\[
M_k = 
\left(
\begin{array}{cc}
X_{j_k}^{1/2} & X_{j_k}^{1/2} \\
0 & X_{j_k}^{-1/2} 
\end{array}
\right).
\]
On the other hand, if the curve $c$ turns to the right before leaving $t$, as depicted on the right hand side of Figure~\ref{fig:leftrightturn}, then we define 
\[
M_k = 
\left(
\begin{array}{cc}
X_{j_k}^{1/2} & 0 \\
X_{j_k}^{-1/2}  & X_{j_k}^{-1/2} 
\end{array}
\right).
\]
It is well known that the product 
\[
\rho(c)=M_1\dots M_s
\]
is a matrix representing the monodromy of~$\mathcal{L}$ around~$c$.

\begin{figure}[ht]
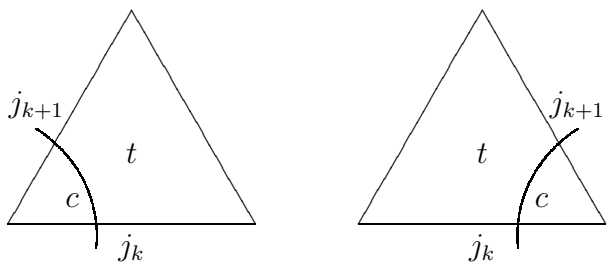
 \begin{center}
\[
\xy /l1.5pc/:
{\xypolygon3"A"{~:{(-3,0):}}},
{\xypolygon3"B"{~:{(-3.5,0):}~>{}}},
(3,-0.5)*{}="s";
(1.75,2)*{}="t";
(3,-1)*{j_{k+1}};
(1,2)*{j_k};
"s";"t" **\crv{(1.5,0.5) & (1.75,2)};
(2.25,1)*{c};
(1,0)*{t};
\endxy
\qquad
\xy /l1.5pc/:
{\xypolygon3"A"{~:{(-3,0):}}},
{\xypolygon3"B"{~:{(-3.5,0):}~>{}}},
(-1,-0.5)*{}="s";
(0.25,2)*{}="t";
(-1,-1)*{j_{k+1}};
(1,2)*{j_k};
"s";"t" **\crv{(0.5,0.5) & (0.25,2)};
(-0.25,1)*{c};
(1,0)*{t};
\endxy
\]
\caption{A left and right turn over the triangle $t$.\label{fig:leftrightturn}}
\end{center} \end{figure}

\begin{definition}
Let $\ell$ be an integral lamination on $\Sigma$ consisting of a single closed curve~$c$ of weight~$k$. Then we define 
\[
\mathbb{I}(\ell)\coloneqq\Tr(\rho(c)^k).
\]
\end{definition}

Thus $\mathbb{I}(\ell)$ is a Laurent polynomial in the variables $X_j^{1/2}$. In fact, since the monodromy $\rho(c)$ is a product of the matrices $M_k$ which factor as 
\[
\left(
\begin{array}{cc}
X_{j_k} & X_{j_k} \\
0  & 1 
\end{array}
\right)\cdot X_{j_k}^{-1/2}
\quad
\text{or}
\quad
\left(
\begin{array}{cc}
X_{j_k} & 0 \\
1  & 1 
\end{array}
\right)\cdot X_{j_k}^{-1/2},
\]
we see that the monodromy factors as $\rho(c)=M\cdot X_{j_1}^{-1/2}\dots X_{j_s}^{-1/2}$ for some matrix $M$ with polynomial entries. Therefore, if we write $F_\ell(X_1,\dots,X_n)$ for the trace of $M^k$, we obtain the following result.

\begin{lemma}
\label{lem:nonperipheralFpolynomial}
Let $\ell$ be an integral lamination on $\Sigma$ consisting of a single closed curve~$c$ of weight~$k$. Then
\[
\mathbb{I}(\ell)=F_\ell(X_1,\dots,X_n)\cdot X_1^{h_{\ell,1}}\dots X_n^{h_{\ell,n}}
\]
where $F_\ell(X_1,\dots,X_n)$ is a polynomial in the coordinates and $h_{\ell,j}$ is $-k/2$ times the geometric intersection number of $c$ and the arc~$j$ (the minimal number of intersections between an element in the homotopy class of $c$ and an element of the homotopy class of $j$ relative to its endpoints).
\end{lemma}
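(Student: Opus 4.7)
The plan is to use directly the factorization of the monodromy matrices $M_k$ already exhibited in the paragraph preceding the lemma, and then convert the product of half-integer powers along the list $j_1,\dots,j_s$ into a product over the arcs of $T$ by counting multiplicities via intersection numbers.

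First I would observe that each matrix $M_k$ factors as $M_k = N_k \cdot X_{j_k}^{-1/2}$, where
\[
N_k = \begin{pmatrix} X_{j_k} & X_{j_k} \\ 0 & 1 \end{pmatrix}
\quad\text{or}\quad
N_k = \begin{pmatrix} X_{j_k} & 0 \\ 1 & 1 \end{pmatrix}
\]
has entries in $\mathbb{Z}[X_1,\dots,X_n]$. Since the scalar factors $X_{j_k}^{-1/2}$ are central, the monodromy factors as
\[
\rho(c) = M_1 \cdots M_s = M \cdot \prod_{l=1}^{s} X_{j_l}^{-1/2},
\qquad M \coloneqq N_1 \cdots N_s,
\]
with $M$ a $2\times 2$ matrix whose entries are polynomials in $X_1,\dots,X_n$. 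Raising to the $k$-th power and taking the trace,
\[
\mathbb{I}(\ell) = \Tr(\rho(c)^k) = \Tr(M^k) \cdot \prod_{l=1}^{s} X_{j_l}^{-k/2}.
\]
Setting $F_\ell(X_1,\dots,X_n) \coloneqq \Tr(M^k)$ gives a polynomial in the coordinates, as required.

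Next I would identify the monomial prefactor. By construction of the list $j_1,\dots,j_s$, the arcs of $T$ are listed with multiplicity equal to the number of transverse intersections with $c$. Because $c$ was deformed so as to meet each arc of $T$ transversely in the minimal possible number of points, the number of times an arc $j$ occurs in the list $(j_1,\dots,j_s)$ is precisely the geometric intersection number $i(c,j)$ of $c$ and $j$. Therefore
\[
\prod_{l=1}^{s} X_{j_l}^{-k/2} = \prod_{j=1}^{n} X_j^{-k\, i(c,j)/2} = \prod_{j=1}^{n} X_j^{h_{\ell,j}},
\]
using the definition $h_{\ell,j} = -(k/2)\, i(c,j)$. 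Combining the two displays yields the claimed factorization.

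The only point requiring any care is the claim that the multiplicity of $j$ in $(j_1,\dots,j_s)$ is the \emph{geometric} (as opposed to merely actual) intersection number, which is why we invoked the assumption that $c$ was deformed into minimal position with respect to $T$. Apart from this small bookkeeping, the lemma follows immediately from the factorization already recorded in the text, so I do not expect any substantial obstacle.
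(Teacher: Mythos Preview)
Your proof is correct and follows essentially the same approach as the paper: factor each $M_k$ as a polynomial-entry matrix times the scalar $X_{j_k}^{-1/2}$, pull the scalars out of the trace, and identify the resulting monomial with $\prod_j X_j^{h_{\ell,j}}$ via the minimal-position assumption on~$c$. The only difference is that you make the conversion from the product over crossings $\prod_{l=1}^s X_{j_l}^{-k/2}$ to the product over arcs $\prod_{j=1}^n X_j^{h_{\ell,j}}$ explicit, whereas the paper leaves this implicit in the statement of the lemma.
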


\subsection{Arc functions}
\label{sec:arcfunctions}

Now suppose that $\ell$ is an integral lamination on $\Sigma$ consisting of curves $c_0,c_1,\dots,c_N$ connecting points on the boundary of~$\mathbb{S}$. We assume that $c_0$ is a curve of weight $+1$ connecting the boundary segment~$b_0$ to the boundary segment~$b_1$, that $c_1$ is a curve of weight $-1$ connecting $b_1$ to the boundary segment~$b_2$, and that this pattern continues with the wights of the curves alternating between~$+1$ and $-1$ until we come to~$c_N$, which is a curve of weight~$-1$ connecting $b_N$ to $b_{N+1}=b_0$.

We will write $\gamma_k$ for the arc or boundary segment obtained from $c_k$ by dragging each of its endpoints along the boundary in the counterclockwise direction until it hits a marked point. If we are given a framed $PGL_2(\mathbb{C})$-local system $(\mathcal{L},s)$ on $\Sigma$, then we can find a decorated twisted $SL_2(\mathbb{C})$-local system $(\widehat{\mathcal{L}},\widehat{s})$ which is sent to $(\mathcal{L},s)$ by the map~\eqref{eqn:pmap}. Then by the construction of Definition~\ref{def:Acoordinate}, we associate a number $A_{\gamma_k}$ to each~$\gamma_k$.

\begin{definition}
Let $\ell$ be an integral lamination consisting of open curves $c_0,\dots,c_N$ as above. Then we define 
\[
\mathbb{I}(\ell)\coloneqq\prod_kA_{\gamma_k}^{w_k}
\]
where $w_k$ is the weight of the curve $c_k$.
\end{definition}

Let $T$ be an ideal triangulation of~$\Sigma$. By adapting the argument from Section~6.2.1 of~\cite{Allegretti18}, we will now show that the function $\mathbb{I}(\ell)$ can be written as a Laurent polynomial in the square roots $X_j^{1/2}$ associated to arcs of~$T$. In particular, it will follow that $\mathbb{I}(\ell)$ is independent of the decorated twisted local system that we chose in order to define it.

In order to apply the results from the theory of cluster algebras that we reviewed above, we will in fact consider the enlarged marked bordered surface $\overline{\Sigma}$ obtained from $\Sigma$ by gluing a triangle to each boundary segment of $\Sigma$ along one of its edges. This $\overline{\Sigma}$ is a marked bordered surface with twice as many boundary segments and marked points as~$\Sigma$. The ideal triangulation $T$ determines an ideal triangulation~$\overline{T}$ of~$\overline{\Sigma}$ in an obvious way, and the associated quiver $Q(\overline{T})$ can be regarded as an ice quiver with no frozen vertices and used to define a cluster algebra associated to the surface. In particular, there is an $F$-polynomial and a $\mathbf{g}$-vector associated to each $\gamma_k$, which is necessarily an arc in the enlarged surface~$\overline{\Sigma}$.

\begin{lemma}
\label{lem:expandAcoordinate}
For every $\gamma_k$, we have 
\[
A_{\gamma_k}=F_{\gamma_k}(X_1,\dots,X_n)\prod_i A_i^{g_{\gamma_k,i}}
\]
where $F_{\gamma_k}$ is the $F$-polynomial associated to $\gamma_k$ and $\mathbf{g}_{\gamma_k}=(g_{\gamma_k,i})$ is the $\mathbf{g}$-vector associated to~$\gamma_k$.
\end{lemma}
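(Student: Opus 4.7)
The plan is to reduce the lemma to a direct application of Theorem~\ref{thm:expressclustervariable} inside the cluster algebra attached to the enlarged surface $\overline{\Sigma}$, with initial seed $\bigl(Q(\overline{T}),(A_1,\dots,A_m)\bigr)$ where $A_1,\dots,A_m$ are the cluster coordinates of Definition~\ref{def:Acoordinate} associated to the arcs of $\overline{T}$. Passing from $\Sigma$ to $\overline{\Sigma}$ is essential, since the ice quiver $Q(T)$ has the boundary segments of $\Sigma$ as frozen vertices, whereas $Q(\overline{T})$ is a quiver with no frozen vertices, as required by the hypothesis of Theorem~\ref{thm:expressclustervariable}.

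The key preliminary step is to identify the moduli-space coordinate $A_{\gamma_k}$ with a cluster variable of the above cluster algebra. Since $\gamma_k$ is an arc of $\overline{\Sigma}$, it may be extended to a complete ideal triangulation $\overline{T}'$, and by Proposition~\ref{prop:flipelementary} there is a finite sequence of flips carrying $\overline{T}$ to $\overline{T}'$. By Proposition~\ref{prop:flipmutation} this sequence induces a corresponding sequence of quiver mutations, and by the transformation formula~\eqref{eqn:Atransformation} the moduli space $A$-coordinates on the two sides of each flip transform by precisely the mutation rule of Definition~\ref{def:seedmutation}. Consequently $A_{\gamma_k}$ coincides with the cluster variable obtained by mutating the initial seed along this sequence.

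With this identification, Theorem~\ref{thm:expressclustervariable} yields
\[
A_{\gamma_k}=F_{\gamma_k}\!\left(\prod_i A_i^{\varepsilon_{1i}},\dots,\prod_i A_i^{\varepsilon_{mi}}\right)\prod_i A_i^{g_{\gamma_k,i}},
\]
and the arguments of $F_{\gamma_k}$ are, by the definition of the $X$-coordinates appearing just before Proposition~\ref{prop:Xbirational}, exactly the values $X_1,\dots,X_n$ of the triangulation $\overline{T}$. Substituting produces the stated formula. No step is substantively difficult; the only genuinely necessary device is the enlargement $\overline{\Sigma}$, which removes the frozen vertices and brings the setup within the scope of Theorem~\ref{thm:expressclustervariable}.
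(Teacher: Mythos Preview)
Your approach matches the paper's: pass to $\overline{\Sigma}$ so that $Q(\overline{T})$ has no frozen vertices, identify $A_{\gamma_k}$ with a cluster variable via flips, and invoke Theorem~\ref{thm:expressclustervariable}. However, there is a genuine gap in your final substitution step.

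Theorem~\ref{thm:expressclustervariable} produces
\[
A_{\gamma_k}=F_{\gamma_k}\!\left(\prod_i A_i^{\varepsilon_{1i}},\dots,\prod_i A_i^{\varepsilon_{mi}}\right)\prod_i A_i^{g_{\gamma_k,i}},
\]
where $m$ is the number of \emph{arcs of $\overline{T}$}, i.e.\ the number of \emph{edges} of $T$. Only the first $n$ of these arguments (those indexed by arcs of $T$) are equal to the Fock--Goncharov coordinates $X_1,\dots,X_n$; the remaining $m-n$ arguments correspond to boundary segments of $\Sigma$ and are monomials in the $A_i$ that are not $X$-coordinates of $\Sigma$ at all. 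So your sentence ``the arguments of $F_{\gamma_k}$ are \dots\ exactly the values $X_1,\dots,X_n$'' is not justified as written. You must show that $F_{\gamma_k}$ does not actually depend on the last $m-n$ variables. The paper handles this by treating the case where $\gamma_k$ is a boundary segment of $\Sigma$ separately (then $F_{\gamma_k}=1$), and otherwise invoking the matrix formula of~\cite{MW}, which shows that $F_{\gamma_k}$ involves only the variables attached to edges that $\gamma_k$ crosses; since $\gamma_k$ is an arc on $\Sigma$, these are all arcs of $T$. Without this argument, your substitution is incomplete.
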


\begin{proof}
Let us label the edges of $T$ by the numbers $1,\dots,m$. Then we can view the coordinate functions $A_1,\dots,A_m$ as cluster variables in an initial seed for the cluster algebra associated to~$\overline{\Sigma}$. The function $A_{\gamma_k}$ is also a cluster variable in this algebra, and so by Theorem~\ref{thm:expressclustervariable}, we have 
\[
A_{\gamma_k}=F_{\gamma_k}\left(\prod_iA_i^{\varepsilon_{1i}},\dots,\prod_iA_i^{\varepsilon_{mi}}\right)A_1^{g_{\gamma_k,1}}\dots A_m^{g_{\gamma_k,m}}.
\]
If $\gamma_k$ is a boundary segment of~$\Sigma$, then we have $F_{\gamma_k}=1$ and we are done. Otherwise, it follows from the matrix formula of~\cite{MW} that $F_{\gamma_k}$ is a polynomial only in the variables associated to edges that $\gamma_k$ crosses, which are necessarily arcs of~$T$. For any arc $j$ of~$T$, the product $\prod_iA_i^{\varepsilon_{ji}}$ equals the $X$-coordinate associated to this arc.
\end{proof}

Let $c_k'$ be the curve on~$\overline{\Sigma}$ obtained by modifying $\gamma_k$ in a neighborhood of each of its endpoints, dragging the endpoint away from the marked point in the counterclockwise direction onto the adjacent boundary segment of~$\overline{\Sigma}$.

\begin{lemma}
\label{lem:productgvectors}
Let $\mathbf{s}=(s_i)$ be the vector defined by $\mathbf{s}=\sum_{k\text{ even}}\mathbf{g}_{\gamma_k}-\sum_{k\text{ odd}}\mathbf{g}_{\gamma_k}$. Then there exists a half integral vector $\mathbf{h}=(h_{\ell,j})$, indexed by the arcs $j$ of~$T$, such that 
\[
\prod_iA_i^{s_i}=\prod_jX_j^{h_{\ell,j}}.
\]
Namely, $h_{\ell,j}$ is given by $-\frac{1}{2}\sum_kw_k\mu_{k,j}$ where $\mu_{k,j}$ is the geometric intersection number of $c_k'$ and the arc~$j$ (the minimal intersection number relative to endpoints).
\end{lemma}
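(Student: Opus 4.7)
The plan is to reduce the equality of monomials to an equality of exponents on each $A_i$, and then verify the resulting numerical identity using a known combinatorial description of $\mathbf{g}$-vectors for arcs in surface cluster algebras.

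First I would use that $X_j=\prod_i A_i^{\varepsilon_{ji}}$, where $i$ runs over all edges of $\overline{T}$, to rewrite
\[
\prod_j X_j^{h_{\ell,j}}=\prod_i A_i^{\sum_j \varepsilon_{ji}\, h_{\ell,j}}.
\]
Since the coordinates $A_i$ are algebraically independent elements of the ambient field, the desired identity $\prod_i A_i^{s_i}=\prod_j X_j^{h_{\ell,j}}$ is equivalent to the exponent identity
\[
\sum_k w_k\, g_{\gamma_k,i} \;=\; -\tfrac{1}{2}\sum_k w_k \sum_j \varepsilon_{ji}\, \mu_{k,j}
\]
for every edge $i$ of $\overline{T}$ (in particular, the right-hand side must be an integer, which will follow from the analysis below).

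Second, I would apply the explicit description of the $\mathbf{g}$-vector of a cluster variable attached to an arc in a surface cluster algebra, as developed in the work of Musiker--Schiffler--Williams and of Fomin--Thurston. This expresses $g_{\gamma_k,i}$ as a local combinatorial quantity depending only on how the arc $\gamma_k$ enters and exits each triangle of $\overline{T}$, and it relates $\mathbf{g}$-vectors to shear-type coordinates of the elementary lamination dual to $\gamma_k$. In particular, one obtains a formula of the form ``interior crossing contribution'' plus ``endpoint contribution,'' where the interior crossings produce precisely $-\tfrac{1}{2}\sum_j \varepsilon_{ji}\,\mu_{k,j}^{\gamma_k}$ with $\mu_{k,j}^{\gamma_k}$ the intersection number of $\gamma_k$ with the arc $j$.

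Third, I would analyse the weighted sum $\sum_k w_k g_{\gamma_k,i}$. The curves $c_0,\dots,c_N$ are arranged cyclically with alternating weights and share boundary segments in pairs ($c_{k-1}$ and $c_k$ both end on $b_k$). After dragging to $\gamma_k$, the two endpoint contributions of $\gamma_{k-1}$ and $\gamma_k$ occurring at the marked point just counterclockwise of $b_k$ sit in the same triangle of $\overline{T}$ and enter with opposite signs in the alternating sum, so they cancel pairwise. The passage from $\gamma_k$ to $c_k'$ (dragging endpoints back onto the glued boundary of $\overline{\Sigma}$) converts $\mu_{k,j}^{\gamma_k}$ into $\mu_{k,j}$; the discrepancy precisely accounts for the endpoint contributions that survive the alternating cancellation. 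Combining these, the exponent identity above drops out.

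The main obstacle is step three, specifically the book-keeping of endpoint contributions. The interior-crossing part of the $\mathbf{g}$-vector formula gives half-integers, so one must verify that the half-integer pieces assemble correctly under the alternating sum to produce the integer value $s_i$, and simultaneously that the endpoint pieces cancel in a manner consistent with the conversion $\mu^{\gamma_k}_{j}\rightsquigarrow\mu_{k,j}$. Degenerate cases in which a curve $c_k'$ is homotopic to a boundary segment, or in which two of the $\gamma_k$ coincide, require separate verification but reduce to the same local picture.
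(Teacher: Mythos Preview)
Your reduction to an exponent identity is correct and is exactly how the paper begins. The divergence comes in step~2, and it creates a genuine gap.

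The paper does not decompose $g_{\gamma_k,i}$ into an ``interior crossing'' piece $-\tfrac{1}{2}\sum_j \varepsilon_{ji}\,\mu_{k,j}^{\gamma_k}$ plus an ``endpoint'' correction. No such decomposition is standard, and you give no argument for it; the $\mathbf{g}$-vector components are integers, so splitting off a half-integer ``interior'' part requires justification you do not supply. Because this formula is the input to your step~3, the endpoint-cancellation argument there is built on an unverified claim, and you yourself flag it as the main obstacle.

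The paper sidesteps all of this by quoting a single result of Reading (\cite{Reading}, Proposition~5.2): the component $g_{\gamma_k,j}$ equals the shear parameter of the curve $c_k'$ at the arc $j$. Note that this already refers to $c_k'$, the curve appearing in the statement of the lemma, not to $\gamma_k$. Consequently there is no need to pass from $\gamma_k$-intersection numbers to $c_k'$-intersection numbers, and no ``endpoint contributions'' ever appear. Once $s_i=\sum_k w_k\,(\text{shear of }c_k'\text{ at }i)$, the remaining identity
\[
\prod_j X_j^{\eta_j}=\prod_i A_i^{-2s_i},\qquad \eta_j=\sum_k w_k\,\mu_{k,j},
\]
is a local check in each quadrilateral: it is the (tropicalised) relation between shear coordinates and intersection numbers coming from $X_j=\prod_i A_i^{\varepsilon_{ji}}$.

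In short, replace your step~2 by the direct identification $g_{\gamma_k,j}=\text{shear parameter of }c_k'\text{ at }j$, and your step~3 disappears entirely. The alternating-weight structure of $\ell$ plays no role in the paper's argument beyond the definition of $\mathbf{s}$.
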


\begin{proof}
Let $c$ be a curve on a marked bordered surface and let $T$ be an ideal triangulation of this surface. We assume that $c$ intersects the arcs of $T$ transversely and the number of intersections with a given edge is equal to the geometric intersection number. If $j$ is any arc of~$T$, then there is a quadrilateral $q$ formed by the two triangles that share this edge~$j$. The \emph{shear parameter} at $j$ is defined as a sum of contributions from all intersections of $c$ with the arc~$j$. Specifically, such an intersection contributes $+1$ (respectively, $-1$) if the curve connects opposite sides of~$q$ in the manner illustrated on the left (respectively, right) hand side of Figure~\ref{fig:shear}.
\begin{figure}[ht]
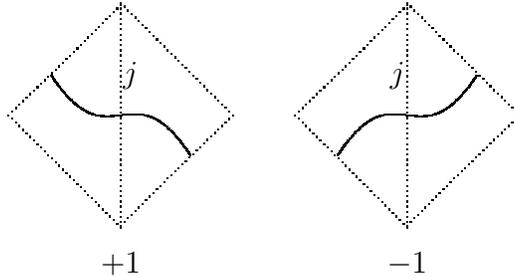
 \begin{center}
\[
\xy /l1.25pc/:
{\xypolygon4"A"{~:{(2,2):}~>{.}}};
{"A1"\PATH~={**@{.}}'"A3"};
(-0.75,1)*{}="A0";
(2.75,-1)*{}="B0";
(1,3.75)*{+1};
(0.75,-1)*{j};
"A0";"B0" **\crv{(0.75,-1.25) & (1.25,1.25)};
\endxy
\qquad
\xy /l1.25pc/:
{\xypolygon4"A"{~:{(2,2):}~>{.}}};
{"A1"\PATH~={**@{.}}'"A3"};
(-0.75,-1)*{}="A0";
(2.75,1)*{}="B0";
(1,3.75)*{-1};
(1.25,-1)*{j};
"A0";"B0" **\crv{(0.75,1.25) & (1.25,-1.25)};
\endxy
\]
\caption{Two types of contributions to the shear parameter.\label{fig:shear}}
\end{center} \end{figure}
In particular, we can talk about the shear parameter associated to the curve $c_k'$ and an arc $j$ of~$\overline{T}$. According to~\cite{Reading}, Proposition~5.2, the component $g_{\gamma_k,j}$ of the $\mathbf{g}$-vector $\mathbf{g}_{\gamma_k}$ equals this shear parameter. Using this result and the definition of the $X_j$ as monomials in the $A_i$, one can check that 
\[
\prod_jX_j^{\eta_j}=\prod_iA_i^{-2s_i}
\]
where $\eta_j=\sum_kw_k\mu_{k,j}$.
\end{proof}

\begin{lemma}
\label{lem:arcFpolynomial}
Let $\ell$ be an integral lamination consisting of open curves $c_0,\dots,c_N$ as above. Then 
\[
\mathbb{I}(\ell)=F_\ell(X_1,\dots,X_n)\cdot X_1^{h_{\ell,1}}\dots X_n^{h_{\ell,n}}
\]
where 
\[
F_\ell(X_1,\dots,X_n)=\prod_{k=0}^NF_{\gamma_k}(X_1,\dots,X_n)
\]
and $h_{\ell,j}$ is defined as in Lemma~\ref{lem:productgvectors}.
\end{lemma}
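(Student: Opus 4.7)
The plan is to prove the formula by direct substitution combining the two preceding lemmas. I would begin with the definition
\[
\mathbb{I}(\ell)=\prod_{k=0}^N A_{\gamma_k}^{w_k}
\]
and expand each factor via Lemma~\ref{lem:expandAcoordinate}, writing $A_{\gamma_k}=F_{\gamma_k}(X_1,\dots,X_n)\prod_i A_i^{g_{\gamma_k,i}}$. This immediately splits the product into $F$-polynomial factors and a monomial in the initial cluster variables $A_i$:
\[
\mathbb{I}(\ell)=\Big(\prod_{k=0}^N F_{\gamma_k}^{w_k}\Big)\cdot\prod_i A_i^{\sum_{k} w_k\, g_{\gamma_k,i}}.
\]

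Next, since the weights alternate as $w_k=(-1)^k$ with $w_0=+1$, the exponent $\sum_k w_k g_{\gamma_k,i}$ is exactly the $i$-th component $s_i$ of the vector $\mathbf{s}=\sum_{k\text{ even}}\mathbf{g}_{\gamma_k}-\sum_{k\text{ odd}}\mathbf{g}_{\gamma_k}$ introduced in Lemma~\ref{lem:productgvectors}. Applying that lemma converts the $A_i$-monomial into the desired monomial $\prod_j X_j^{h_{\ell,j}}$ in the cluster Poisson coordinates, yielding
\[
\mathbb{I}(\ell)=\Big(\prod_{k=0}^N F_{\gamma_k}^{w_k}\Big)\prod_j X_j^{h_{\ell,j}}.
\]

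To match the statement, I would then identify the product of $F$-polynomials with the claimed unweighted product. Recall from the proof of Lemma~\ref{lem:expandAcoordinate} that $F_{\gamma_k}=1$ whenever $\gamma_k$ is a boundary segment of $\Sigma$, since such segments correspond to initial cluster variables of $\overline{T}$. The key geometric observation to verify is that when $w_k=-1$, the counterclockwise dragging produces a $\gamma_k$ homotopic to a boundary segment of $\Sigma$; granting this, $F_{\gamma_k}^{w_k}=F_{\gamma_k}=1$ for every odd $k$, and the signed product collapses to the unsigned product $\prod_{k=0}^N F_{\gamma_k}$ appearing in the statement.

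I expect the principal obstacle to be precisely this final geometric step. It should follow from the cyclic arrangement of the curves $c_0,\dots,c_N$: consecutive curves share a boundary segment and carry opposite weights, and the non-intersection condition on the lamination together with the alternating sign pattern forces every negative-weight curve into a boundary-hugging homotopy class, whose counterclockwise drag retracts onto a single boundary segment of $\Sigma$. Making this precise will probably require a careful case analysis near each shared boundary segment $b_k$, taking into account the two possible configurations of $c_{k-1}$ and $c_k$ on either side of the marked points adjacent to $b_k$.
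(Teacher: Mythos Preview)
Your approach is exactly the paper's: expand each $A_{\gamma_k}$ via Lemma~\ref{lem:expandAcoordinate}, collect the $A$-monomial into $\prod_i A_i^{s_i}$, and apply Lemma~\ref{lem:productgvectors}. The only difference is the order in which you handle the $F$-polynomial simplification and the $A$-to-$X$ conversion, which is immaterial.

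Where you diverge from the paper is in your assessment of the ``principal obstacle.'' You propose to deduce that each weight-$(-1)$ curve yields a boundary segment from the cyclic arrangement of the $c_k$ and the non-intersection hypothesis, anticipating a case analysis near each shared $b_k$. This is unnecessary. The paper simply writes ``If $w_k=-1$ then $\gamma_k$ is a boundary segment on~$\Sigma$,'' and the reason is already built into Definition~\ref{def:integrallamination}: condition~1 forces any curve of negative weight to be \emph{special}, i.e.\ to cut off a disk containing a single marked point. For such a curve the counterclockwise drag of the two endpoints sends one to that marked point and the other to an adjacent marked point, and the disk provides the homotopy of the dragged curve to the boundary segment between them. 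So the step you flag as delicate is a direct consequence of the lamination axioms rather than of the particular cyclic pattern $c_0,\dots,c_N$, and no case analysis is required.
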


\begin{proof}
By Lemma~\ref{lem:expandAcoordinate} and the definition of $\mathbb{I}(\ell)$, we have 
\[
\mathbb{I}(\ell)=\prod_{k=0}^N\left(F_{\gamma_k}(X_1,\dots,X_n)\cdot\prod_i A_i^{g_{\gamma_k,i}}\right)^{w_k}.
\]
By assumption, we have either $w_k=+1$ or $w_k=-1$. If $w_k=-1$ then $\gamma_k$ is a boundary segment on~$\Sigma$, and the associated polynomial is $F_{\gamma_k}=1$. Therefore 
\[
\mathbb{I}(\ell)=\left(\prod_{k=0}^NF_{\gamma_k}(X_1,\dots,X_n)\right)\cdot \prod_iA_i^{s_i}
\]
where $s_i=\sum_{k\text{ even}}g_{\gamma_k,i}-\sum_{k\text{ odd}}g_{\gamma_k,i}$. The result now follows from Lemma~\ref{lem:productgvectors}.
\end{proof}

\subsection{The general case}

We can now give the general definition of the canonical basis for the algebra of regular functions on the cluster Poisson variety. In the following discussion, two integral laminations $\ell_1$ and $\ell_2$ will be called \emph{compatible} if no curve from $\ell_1$ intersects or is homotopic to a curve from~$\ell_2$. If $\ell_1$ and~$\ell_2$ are compatible laminations, we write $\ell_1+\ell_2$ for the lamination defined by taking the union of the curves from $\ell_1$ and $\ell_2$.

\begin{definition}
If $\ell_1$ and $\ell_2$ are compatible laminations, then we define 
\[
\mathbb{I}(\ell_1+\ell_2)\coloneqq\mathbb{I}(\ell_1)\mathbb{I}(\ell_2).
\]
\end{definition}

If $\ell$ is any integral lamination on~$\Sigma$, then $\ell$ can be represented as a union of curves of weight~$\pm1$. Recalling condition~2 in Definition~\ref{def:integrallamination}, we can inductively remove laminations of the types considered in Subsections~\ref{sec:loopfunctions} and~\ref{sec:arcfunctions}, and we see that $\ell$ can be written as a sum 
\begin{align}
\label{eqn:decomposelamination}
\ell=\sum_i\ell_i
\end{align}
where the $\ell_i$ are pairwise compatible laminations of these two types. Thus $\mathbb{I}(\ell)$ is defined for any lamination~$\ell$. The next result follows immediately from the definition and Lemmas~\ref{lem:nonperipheralFpolynomial} and~\ref{lem:arcFpolynomial}.

\begin{proposition}
\label{prop:generalFpolynomial}
If $\ell$ is any integral lamination on $\Sigma$ written in the form~\eqref{eqn:decomposelamination}, then 
\[
\mathbb{I}(\ell)=\prod_iF_{\ell_i}(X_1,\dots,X_n)\cdot X_1^{h_{\ell,1}}\dots X_n^{h_{\ell,n}}
\]
where $h_{\ell,k}=\sum_ih_{\ell_i,k}$.
\end{proposition}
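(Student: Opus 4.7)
The plan is to observe that this proposition is essentially a formal consequence of the multiplicativity definition $\mathbb{I}(\ell_1+\ell_2) = \mathbb{I}(\ell_1)\mathbb{I}(\ell_2)$ together with the factorization results already established for the two primitive types of laminations.

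First I would fix a decomposition $\ell=\sum_i \ell_i$ of the form~\eqref{eqn:decomposelamination}, where each $\ell_i$ is either a single closed loop with integer weight (as in Section~\ref{sec:loopfunctions}) or a cyclic collection of alternating $\pm 1$-weighted arcs ending on the boundary (as in Section~\ref{sec:arcfunctions}). By iterating the definition $\mathbb{I}(\ell_1+\ell_2) = \mathbb{I}(\ell_1)\mathbb{I}(\ell_2)$ for compatible laminations, we obtain $\mathbb{I}(\ell)=\prod_i \mathbb{I}(\ell_i)$.

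Next I would apply Lemma~\ref{lem:nonperipheralFpolynomial} to each $\ell_i$ of closed-loop type and Lemma~\ref{lem:arcFpolynomial} to each $\ell_i$ of the arc type. In both cases, we get an expression of the form
\[
\mathbb{I}(\ell_i) = F_{\ell_i}(X_1,\dots,X_n)\cdot X_1^{h_{\ell_i,1}}\dots X_n^{h_{\ell_i,n}},
\]
where $F_{\ell_i}$ is a polynomial and the $h_{\ell_i,j}$ are the half-integers specified in those lemmas. Taking the product over $i$ and collecting the exponents on each $X_j$ via the additivity $\sum_i h_{\ell_i,j}=h_{\ell,j}$ then yields exactly the claimed formula.

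There is no essential obstacle here, since the proposition is a bookkeeping consequence of previous lemmas once the decomposition is chosen; the substantive content lies in the earlier lemmas themselves. The only mild subtlety worth flagging is that the factorization is stated relative to a chosen decomposition of $\ell$, so I would not attempt to prove independence of the decomposition at this stage (that fact would follow a posteriori from the well-definedness of $\mathbb{I}(\ell)$ guaranteed by the preceding definitions).
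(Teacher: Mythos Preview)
Your proposal is correct and matches the paper's approach exactly: the paper states that the proposition ``follows immediately from the definition and Lemmas~\ref{lem:nonperipheralFpolynomial} and~\ref{lem:arcFpolynomial},'' which is precisely the multiplicativity-plus-factorization argument you spell out.
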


Note in particular that if $\ell\in\mathcal{A}(\mathbb{Z}^t)$, then the total weight of the curves of $\ell$ that intersect a given edge is always even and therefore, for such laminations $\ell$, $\mathbb{I}(\ell)$ is a Laurent polynomial in the coordinates $X_i$ and is independent of the choice of square roots $X_i^{1/2}$. Thus the above construction gives a canonical map 
\[
\mathbb{I}:\mathcal{A}(\mathbb{Z}^t)\rightarrow\mathcal{O}(\mathcal{X}).
\]
We claim that the image of this map is a canonical $\mathbb{Q}$-vector space basis for the right hand side. We will now prove this in several steps.

To prove that the functions $\mathbb{I}(\ell)$ are linearly independent, we use the following well known fact.

\begin{lemma}[\cite{FG2}, Section~7.2]
\label{lem:highestterm}
Let $\ell$ be any integral lamination. Then for any ideal triangulation, $\mathbb{I}(\ell)$ can be written as a Laurent polynomial in the $X_i^{1/2}$ with positive integer coefficients. The highest order term of this Laurent polynomial is $X_1^{a_1}\dots X_n^{a_n}$ where $a_1,\dots,a_n$ are the coordinates of $\ell$.
\end{lemma}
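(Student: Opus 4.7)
The plan is to bootstrap from the explicit formulas already derived in Sections~\ref{sec:loopfunctions} and~\ref{sec:arcfunctions} for the two basic classes of laminations, and then combine multiplicatively. By Proposition~\ref{prop:generalFpolynomial}, for any decomposition $\ell=\sum_i\ell_i$ into loop and arc pieces one has $\mathbb{I}(\ell)=\prod_i F_{\ell_i}(X_1,\dots,X_n)\cdot\prod_j X_j^{h_{\ell,j}}$. A product of Laurent polynomials in $X_j^{\pm 1/2}$ with nonnegative integer coefficients and a unique componentwise-maximum monomial again has these properties, with leading exponents adding; since the coordinates $a_j$ are themselves additive under compatible unions, it suffices to verify positivity and identify the maximum monomial in the two basic cases.

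For a loop $c$ of weight $k$, I would argue directly from the trace formula $\mathbb{I}(\ell)=\Tr(\rho(c)^k)$. Each matrix $M_i$ from Section~\ref{sec:loopfunctions} has entries in $\mathbb{Z}_{\geq 0}[X^{\pm 1/2}]$, so every entry of $\rho(c)^k$ is a positive sum of monomials and the trace is manifestly a positive Laurent polynomial. For the leading term, the entry $(M_i)_{1,1}=X_{j_i}^{1/2}$ has strictly maximal $X_{j_i}$-exponent among the four entries of $M_i$; selecting the $(1,1)$ entry at every step contributes $\prod_i X_{j_i}^{1/2}$ to the $(1,1)$ entry of $\rho(c)$, and no other sequence of choices can match this exponent simultaneously in every $X_{j_i}$. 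Taking the $k$-th power and then the trace therefore produces the unique maximum monomial $\prod_j X_j^{k\mu_j(\ell)/2}=\prod_j X_j^{a_j(\ell)}$.

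For an arc-type lamination, Lemma~\ref{lem:arcFpolynomial} factors $\mathbb{I}(\ell)$ as $\prod_{k}F_{\gamma_k}(X)\cdot\prod_j X_j^{h_{\ell,j}}$. I would derive positivity and the top-degree monomial of each $F_{\gamma_k}$ from the snake graph expansion of cluster variables in~\cite{MSW2}, applied to the arc $\gamma_k$ in the enlarged surface $\overline{\Sigma}$: each $F$-polynomial is a positive sum over perfect matchings with unit constant term, and its componentwise-maximum monomial is read off from the unique maximal matching. Combining this exponent with the shear-based vector $\mathbf{h}$ of Lemma~\ref{lem:productgvectors}, and performing an edge-by-edge intersection count, would reduce the final exponent of $X_j$ in $\mathbb{I}(\ell)$ to $\tfrac12\sum_k w_k\mu_{k,j}$, which is exactly $a_j(\ell)$.

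The main obstacle will be the arc case: one must match the top-degree monomial of each snake graph $F$-polynomial with the shear contribution $\mathbf{h}$ so that the correct half-weighted intersection number emerges as their sum, in spite of the cancellations introduced by the alternating signs $w_k=\pm 1$ in an arc-type lamination and the fact that boundary-segment pieces contribute trivially to the $F$-factor but nontrivially to $\mathbf{h}$. Once this bookkeeping is made precise and multiplicativity of leading terms is combined with additivity of intersection numbers across the decomposition $\ell=\sum_i\ell_i$, the lemma follows.
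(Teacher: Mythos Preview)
The paper does not supply its own proof of this lemma: it is quoted directly from \cite{FG2}, Section~7.2, and used as a black box. So your proposal is not a re-derivation of something in the paper but an independent attempt to prove the cited result using the machinery of Sections~\ref{sec:loopfunctions}--\ref{sec:arcfunctions}.

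Your loop argument is correct and is essentially the argument in \cite{FG2}: every entry of each $M_k$ lies in $\mathbb{Z}_{\ge 0}[X^{\pm 1/2}]$, the $(1,1)$ entry strictly dominates, and the unique all-$(1,1)$ path in the product gives the top monomial $\prod_j X_j^{a_j(\ell)}$. The multiplicativity reduction via Proposition~\ref{prop:generalFpolynomial} is also fine, since a componentwise-unique top monomial is preserved under products.

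The arc case, however, is only a sketch, and the part you flag as the ``main obstacle'' is genuinely the whole difficulty. You propose to read the top monomial of each $F_{\gamma_k}$ off the maximal perfect matching of the snake graph and then cancel it against the shear-based vector $\mathbf{h}$ from Lemma~\ref{lem:productgvectors}; but you do not actually carry out the identity $\sum_{k\ \mathrm{even}} d_{k,j} + h_{\ell,j} = a_j(\ell)$, which requires comparing intersection numbers of three different curves ($c_k$, $\gamma_k$, and $c_k'$) with each arc $j$ and tracking how the endpoint-dragging affects them. This is doable but is not just ``bookkeeping''---it is the content of the lemma in the arc case. A more uniform route, and one closer in spirit to the loop argument you already gave, is to bypass snake graphs and use the matrix formula of \cite{MW} for $F_{\gamma_k}$ (already invoked in the proof of Lemma~\ref{lem:expandAcoordinate}): that formula expresses $F_{\gamma_k}$ as an entry of a product of $2\times 2$ matrices with nonnegative entries and strictly dominant $(1,1)$ entry, so positivity and the unique top monomial follow exactly as in your loop case, and the identification with $a_j(\ell)$ becomes a direct intersection count rather than a comparison of two separately computed quantities.
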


\begin{proposition}
\label{prop:independent}
The functions $\mathbb{I}(\ell)$ for $\ell\in\mathcal{A}(\mathbb{Z}^t)$ are linearly independent.
\end{proposition}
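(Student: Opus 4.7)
The plan is to fix an ideal triangulation $T$ of $\Sigma$, expand each $\mathbb{I}(\ell)$ as a Laurent polynomial in the cluster coordinates $X_1,\dots,X_n$, and exploit the fact that the vector of leading exponents of the expansion recovers the $\mathbf{a}$-coordinates of $\ell$. Combined with the bijection from Proposition~\ref{prop:acoordinates}, this will let me isolate a single nonzero term in any hypothetical linear relation.

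First, I would note that by Proposition~\ref{prop:generalFpolynomial} together with the remark following it, each $\mathbb{I}(\ell)$ with $\ell\in\mathcal{A}(\mathbb{Z}^t)$ actually lies in $\mathbb{Z}[X_1^{\pm 1},\dots,X_n^{\pm 1}]$, not merely in the half-integer ring. By Lemma~\ref{lem:highestterm} this Laurent polynomial has nonnegative integer coefficients, and the monomial $X_1^{a_1(\ell)}\cdots X_n^{a_n(\ell)}$ is its leading term in the sense that its exponent $\mathbf{a}(\ell)$ componentwise dominates every other exponent with nonzero coefficient. In particular, if $\mathbf{a}(\ell)\not\leq\mathbf{a}(\ell')$ componentwise, then $X^{\mathbf{a}(\ell)}$ does not appear with nonzero coefficient in $\mathbb{I}(\ell')$.

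Now suppose we have a finite nontrivial linear relation
\[
\sum_{i=1}^N c_i\,\mathbb{I}(\ell_i) = 0
\]
with all $c_i\ne 0$ and the $\ell_i\in\mathcal{A}(\mathbb{Z}^t)$ pairwise distinct. By Proposition~\ref{prop:acoordinates} the vectors $\mathbf{a}(\ell_i)\in\mathbb{Z}^n$ are distinct. Choose an index $i_0$ for which $\mathbf{a}(\ell_{i_0})$ is componentwise maximal among $\mathbf{a}(\ell_1),\dots,\mathbf{a}(\ell_N)$. For $i\neq i_0$ we have $\mathbf{a}(\ell_{i_0})\not\leq\mathbf{a}(\ell_i)$, so $X^{\mathbf{a}(\ell_{i_0})}$ does not occur in $\mathbb{I}(\ell_i)$. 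Consequently the coefficient of $X^{\mathbf{a}(\ell_{i_0})}$ on the left-hand side equals $c_{i_0}$ times the (strictly positive) leading coefficient of $\mathbb{I}(\ell_{i_0})$, and vanishing of the whole sum forces $c_{i_0}=0$, a contradiction.

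The substantive ingredient is the highest-term statement of Lemma~\ref{lem:highestterm}; once that is in hand, linear independence follows from the maximal-term argument above. The place I would be most careful is in invoking Lemma~\ref{lem:highestterm} in the strong form that $\mathbf{a}(\ell)$ is the unique componentwise maximum of the support of $\mathbb{I}(\ell)$, rather than merely a maximal exponent in some ad hoc total order, since it is this uniqueness that guarantees that no leading monomial can accidentally cancel against terms from other laminations.
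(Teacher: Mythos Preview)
Your proof is correct and follows essentially the same approach as the paper: both arguments use Lemma~\ref{lem:highestterm} to identify the leading exponent of $\mathbb{I}(\ell)$ with $\mathbf{a}(\ell)$, then invoke Proposition~\ref{prop:acoordinates} to conclude that distinct laminations have distinct leading exponents, so some leading term cannot cancel. The only cosmetic difference is that the paper refines the componentwise partial order to a lexicographic total order before selecting a maximal leading term, whereas you work directly with a componentwise-maximal $\mathbf{a}(\ell_{i_0})$; both routes rely on the same strong form of Lemma~\ref{lem:highestterm} (that $\mathbf{a}(\ell)$ componentwise dominates the support of $\mathbb{I}(\ell)$), which you rightly flag as the substantive point.
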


\begin{proof}
Suppose $\alpha_1,\dots,\alpha_s$ are rational numbers and $\ell_1,\dots,\ell_n$ are distinct laminations such that 
\[
\sum_{k=1}^s\alpha_k\mathbb{I}(\ell_k)=0.
\]
We can impose a lexicographic total ordering on the set of all monic Laurent monomials in the variables~$X_j$. Then one of the canonical functions, say $\mathbb{I}(\ell_1)$, will have the maximal leading term with respect to this total ordering. If there is another lamination $\ell_k$ such that $\mathbb{I}(\ell_k)$ has the same leading term, then by Lemma~\ref{lem:highestterm}, the lamination $\ell_k$ has the same coordinates as~$\ell_1$. By Proposition~\ref{prop:acoordinates}, we must have $\ell_1=\ell_k$, contradicting the assumption that these laminations are distinct. It follows that the leading term of $\mathbb{I}(\ell_1)$ cannot cancel with any other term in the above sum, and we must have $\alpha_1=0$. Thus 
\[
\sum_{k=2}^s\alpha_k\mathbb{I}(\ell_k)=0
\]
and we can apply the same argument to this new sum. Continuing in this way, we see that $\alpha_k=0$ for $k=1,\dots,s$.
\end{proof}

To prove that the functions $\mathbb{I}(\ell)$ span the algebra of all regular functions, we consider a modification of the notion of an integral lamination in which the constituent curves are allowed to have intersections.

\begin{definition}
A \emph{quasi-lamination} on $\Sigma=(\mathbb{S},\mathbb{M})$ is a collection of finitely many arcs, boundary segments, and closed loops on~$\mathbb{S}$ with weights $\pm1$ such that all intersections are transverse and only boundary segments can have weight~$-1$.
\end{definition}

If $\ell$ is a quasi-lamination consisting of a single closed loop, then we can lift $\ell$ to a loop in the punctured tangent bundle $T'\mathbb{S}$. The monodromy of a decorated twisted $SL_2$-local system around this loop can be written as a Laurent polynomial in the coordinates $A_i$ associated to a choice of ideal triangulation~$T$, and this Laurent polynomial has all positive or all negative coefficients (see~\cite{FG1}, Theorem~12.2). Thus, after multiplying by $\pm1$, we get a Laurent polynomial with all positive coefficients which we call $\mathbb{I}'(\ell)$. On the other hand, if $\ell$ is a quasi-lamination consisting of a single arc or boundary segment $\gamma$ of weight~$w$, then we define $\mathbb{I}'(\ell)$ to be the function~$A_\gamma^w$, written in terms of the coordinates associated with the triangulation~$T$. Finally, if $\ell$ is a quasi-lamination which is obtained as a union of two quasi-laminations $\ell_1$ and $\ell_2$, then we define 
\[
\mathbb{I}'(\ell)=\mathbb{I}'(\ell_1)\mathbb{I}'(\ell_2).
\]
In this way, we associate to every quasi-lamination $\ell$ a Laurent polynomial $\mathbb{I}'(\ell)$ in the coordinates~$A_i$. The following fact was used in~\cite{FG1} in the proof of Theorem~12.2 (see also~\cite{MW}, Section~6).

\begin{lemma}
\label{lem:skein}
Let $\ell$ be a quasi-lamination on $\Sigma$, and let $p$ be a point of $\mathbb{S}$ where two curves of $\ell$ intersect so that locally around $p$ the lamination looks like the left hand side of Figure~\ref{fig:skein}. Let $\ell_1$ and $\ell_2$ be the quasi-laminations obtained by modifying $\ell$ in a neighborhood of $p$ as illustrated in the middle and right hand side of Figure~\ref{fig:skein}. Then we have 
\[
\mathbb{I}'(\ell)\pm\mathbb{I}'(\ell_1)\pm\mathbb{I}'(\ell_2)=0
\]
for some choice of signs depending on the quasi-laminations.
\end{lemma}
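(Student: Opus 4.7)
The plan is to reduce the skein identity to the elementary Plücker relation in $\wedge^2\mathbb{C}^2$. Fix a general decorated twisted $SL_2(\mathbb{C})$-local system $(\widehat{\mathcal{L}},\widehat{s})$ on $\Sigma$, and lift each curve of $\ell$ to a loop or path in the punctured tangent bundle $T'\mathbb{S}$. After choosing a basepoint and auxiliary paths, parallel transport trivializes the fibers of $\widehat{\mathcal{L}}$ that we need, identifying them with a fixed copy of $V=\mathbb{C}^2$ equipped with the volume form $\omega$. Each closed loop in $\ell$ then contributes a monodromy matrix $M\in SL_2(\mathbb{C})$ to $\mathbb{I}'(\ell)$ via its trace, and each arc $\gamma$ contributes a factor $\omega(v^\gamma_{1}\wedge v^\gamma_{2})$ where the $v^\gamma_k\in V$ come from parallel-transporting the decoration from the two endpoints along $\gamma$. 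Because $\mathbb{I}'$ is defined multiplicatively on disjoint unions of quasi-laminations, the effect of resolving the single crossing at $p$ is localized: $\mathbb{I}'(\ell)$, $\mathbb{I}'(\ell_1)$, $\mathbb{I}'(\ell_2)$ all share a common global factor coming from the curves of $\ell$ that do not pass through $p$, and the identity reduces to an identity involving only the two strands through~$p$.

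Next, I would carry out the case analysis according to the types of the two strands crossing at $p$. In each case the core tool is one of the two equivalent linear-algebra identities: the Plücker relation
\[
\omega(a\wedge b)\,\omega(c\wedge d) \;=\; \omega(a\wedge c)\,\omega(b\wedge d)\,-\,\omega(a\wedge d)\,\omega(b\wedge c),\qquad a,b,c,d\in V,
\]
and the trace identity $\Tr(A)\Tr(B)=\Tr(AB)+\Tr(AB^{-1})$ valid for $A,B\in SL_2$. When both crossing strands are arcs/boundary segments, parallel-transporting the four endpoint decorations to a common fiber produces four vectors in $V$, and the two smoothings $\ell_1,\ell_2$ at $p$ precisely compute the other two wedge products appearing in the Plücker relation. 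When both strands are closed loops $c_1,c_2$, the two smoothings correspond to the two concatenations of $c_1$ with $c_2$ and with $c_2^{-1}$ respectively, so the trace identity applied to the monodromies gives the desired relation. The mixed case of an arc meeting a loop follows by writing $\Tr(M)\,\omega(v\wedge w) = \omega(Mv\wedge w)+\omega(v\wedge Mw)$, which is Plücker applied to $v,w,Mv,Mw$ combined with $\det M=1$; again the two summands on the right are precisely the wedge factors associated to the two smoothings.

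Finally, I would check that the smoothings chosen by these algebraic identities really agree up to homotopy in $T'\mathbb{S}$ with the lifted resolutions $\ell_1,\ell_2$ coming from Figure~\ref{fig:skein}. The main obstacle, and the reason the statement only claims equality up to a choice of signs, is bookkeeping of signs coming from two sources: (i) the twisting, i.e.\ the monodromy $-1$ of $\widehat{\mathcal{L}}$ around $\sigma_{\mathbb{S}}$, which can appear when two different lifts of the same smoothed quasi-lamination differ by $\sigma_{\mathbb{S}}$, and (ii) the skew-symmetry of $\omega$, which produces a sign depending on how we order the endpoints of each arc when comparing the original configuration with its two smoothings. These signs can be computed locally at $p$ once a tangent framing is fixed, and one verifies that the resulting relation is exactly of the form $\mathbb{I}'(\ell)\pm\mathbb{I}'(\ell_1)\pm\mathbb{I}'(\ell_2)=0$. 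Because only a choice of signs is claimed, and any two such sign choices differ by an overall sign of one term, this ad hoc verification is enough; no attempt at a globally consistent sign rule is needed.
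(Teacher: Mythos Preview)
Your approach is correct and is essentially the standard argument: reduce the skein relation to the Pl\"ucker identity in $\wedge^2\mathbb{C}^2$ (for arc--arc crossings), the $SL_2$ trace identity $\Tr(A)\Tr(B)=\Tr(AB)+\Tr(AB^{-1})$ (for loop--loop crossings), and the mixed identity $\Tr(M)\,\omega(v\wedge w)=\omega(Mv\wedge w)+\omega(v\wedge Mw)$ (for arc--loop crossings), with sign ambiguities absorbed by the twisting and the antisymmetry of~$\omega$. Note, however, that the paper does not actually give its own proof of this lemma: it is stated as a known fact with references to \cite{FG1}, Theorem~12.2, and \cite{MW}, Section~6, where precisely this kind of computation is carried out. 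So there is nothing to compare against in the paper itself; your outline matches what one finds in those references.
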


\begin{figure}[ht]
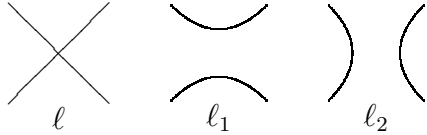
 \begin{center}
\[
\stackrel{
\xy /l1.5pc/:
{\xypolygon4"C"{~:{(1.5,0):}~>{}}};
{"C1"\PATH~={**@{-}}'"C3"};
{"C2"\PATH~={**@{-}}'"C4"};
\endxy
}
{\ell}
\qquad
\stackrel{
\xygraph{
    !{0;/r3.0pc/:}
    [u(0.5)]!{\xunoverh}
}}
{\ell_1}
\qquad
\stackrel{
\xygraph{
    !{0;/r3.0pc/:}
    [u(0.5)]!{\xunoverv}
}}
{\ell_2}
\]
\caption{Resolution of a crossing.\label{fig:skein}}
\end{center} \end{figure}

This lemma gives a \emph{skein relation} that we can use to express the Laurent polynomial associated with a quasi-lamination $\ell$ in terms of the Laurent polynomials associated with simpler quasi-laminations. Another important ingredient in our proof is the following fact.

\begin{lemma}
\label{lem:Chebyshev}
Let $\ell$ be a lamination consisting of a single closed loop of weight~1, and let $k\ell$ be the lamination defined by the same curve with weight~$k$. Then for $K>0$ we can write 
\[
\mathbb{I}(\ell)^K=\sum_{k=0}^Kc_k\mathbb{I}(k\ell)
\]
for $c_k\in\mathbb{Z}_{\geq0}$ such that $c_k=0$ if $k$ has parity different from~$K$.
\end{lemma}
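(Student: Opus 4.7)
The plan is to exploit the fact that the monodromy matrix $\rho(c)$ lies in $SL_2(\mathbb{C})$. Indeed, inspecting the formulas for the factors $M_k$ in Section~\ref{sec:loopfunctions}, each one has determinant one, so $\det\rho(c)=1$. The Cayley--Hamilton identity for a $2\times 2$ matrix of determinant one then gives $\rho(c)^2 = t\,\rho(c) - I$ with $t=\Tr\rho(c)=\mathbb{I}(\ell)$. Multiplying by $\rho(c)^{k-1}$ and taking traces yields the Chebyshev-type recursion
\[
S_{k+1} = t\,S_k - S_{k-1}, \qquad S_0 = 2,\ S_1 = t,
\]
where $S_k := \Tr\rho(c)^k$. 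By the definition of $\mathbb{I}$ in Section~\ref{sec:loopfunctions}, we have $S_k = \mathbb{I}(k\ell)$ for every $k\geq 1$, and we adopt the convention $\mathbb{I}(0\cdot\ell) = 1$, corresponding to the empty lamination obtained by setting the weight to zero in Definition~\ref{def:integrallamination}.

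I would then rewrite the recursion as $tS_k = S_{k+1} + S_{k-1}$ for $k\geq 1$ and note separately that $tS_0 = 2t = 2S_1$. Using this, I would prove by induction on $K\geq 1$ that
\[
t^K = \sum_{k=0}^{K} a_k^{(K)}\, S_k
\]
for nonnegative integers $a_k^{(K)}$ which vanish unless $k\equiv K\pmod 2$. The base case $K=1$ is just $t=S_1$. For the inductive step, one multiplies the expansion for $t^K$ by $t$ and distributes: each $tS_k$ with $k\geq 1$ produces $S_{k+1}+S_{k-1}$, shifting the index and hence flipping its parity, while the single term $a_0^{(K)}\,tS_0$ becomes $2a_0^{(K)} S_1$, again of parity opposite to that of $K$. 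Nonnegativity and integrality are obviously preserved, so the claim follows.

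To finish, I would convert the expansion from the $S_k$ to the $\mathbb{I}(k\ell)$: for $k\geq 1$ set $c_k := a_k^{(K)}$ since $S_k = \mathbb{I}(k\ell)$, and for $k=0$ set $c_0 := 2\,a_0^{(K)}$ since $S_0 = 2 = 2\,\mathbb{I}(0\cdot\ell)$. This yields the formula of the lemma with coefficients in $\mathbb{Z}_{\geq 0}$, and since $a_k^{(K)}=0$ unless $k\equiv K\pmod 2$ the same is true for the $c_k$. The argument is really just a classical Chebyshev identity once one has observed $\rho(c)\in SL_2(\mathbb{C})$, so there is no serious obstacle; the one mild point to watch is the mismatch between $S_0=2$ and the convention $\mathbb{I}(0\cdot\ell)=1$, which harmlessly doubles the constant term while preserving positivity.
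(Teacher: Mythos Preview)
Your argument is correct and follows essentially the same route as the paper: both proofs rest on the Chebyshev recursion $S_{k+1}=tS_k-S_{k-1}$ coming from $\rho(c)\in SL_2$, and then expand $t^K$ in the $S_k$ with nonnegative coefficients of the right parity. The only difference is that the paper invokes this identity by citing \cite{AllegrettiKim} and \cite{MSW2}, whereas you supply the short induction yourself; you are also more careful than the paper about the $k=0$ term, correctly noting that the empty-lamination convention forces $\mathbb{I}(0\cdot\ell)=1$ rather than $S_0=2$ and absorbing the factor of $2$ into $c_0$.
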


\begin{proof}
Recall that the expression $\mathbb{I}(k\ell)$ is defined as $\Tr(\rho(\ell)^k)$ where $\rho(\ell)$ is a matrix in $SL_2(\mathbb{C})$ representing the monodromy around~$\ell$. It is well known (see~\cite{AllegrettiKim}, Proposition~3.6) that this trace is equal to $P_k(\Tr(\rho(\ell)))$ where $P_k$ is the \emph{Chebyshev polynomial} defined recursively by 
\[
P_0(t)=2, \quad P_1(t)=t, \quad P_{N+1}(t)=tP_N(t)-P_{N-1}(t).
\]
On the other hand, it was shown in~\cite{MSW2}, Proposition~2.35 (see also~\cite{AllegrettiKim}, Lemma~3.19) that there is an identity 
\[
t^K=\sum_{k=0}^Kc_kP_k(t)
\]
where $c_k\in\mathbb{Z}_{\geq0}$, and $c_k=0$ if $k$ has parity different from~$K$. The lemma follows.
\end{proof}

We now have all of the ingredients we need to prove that the functions $\mathbb{I}(\ell)$ span $\mathcal{O}(\mathcal{X})$.

\begin{proposition}
\label{prop:span}
If $|\mathbb{M}|>1$ then the functions $\mathbb{I}(\ell)$ span $\mathcal{O}(\mathcal{X})$.
\end{proposition}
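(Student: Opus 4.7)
The plan is to transfer the spanning problem from the cluster Poisson variety $\mathcal{X}$ to the cluster $K_{2}$-variety $\mathcal{A}$, where Theorem~\ref{thm:UequalsA} makes the ring of regular functions explicit, and then to reduce an arbitrary element of that cluster algebra to a linear combination of lamination functions using the skein relation (Lemma~\ref{lem:skein}) and the Chebyshev identity (Lemma~\ref{lem:Chebyshev}).

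First I would consider the regular morphism $\pi : \mathcal{A} \to \mathcal{X}$ defined on each coordinate chart by $\pi^{\ast}(X_{j}) = \prod_{i} A_{i}^{\varepsilon_{ji}}$. Because the presence of boundary edges among the $A_{i}$ makes the rectangular matrix $(\varepsilon_{ji})$ have full row rank, $\pi$ is dominant and the pullback $\pi^{\ast} : \mathcal{O}(\mathcal{X}) \to \mathcal{O}(\mathcal{A})$ is injective. Next I would verify, by comparing the trace construction on the $\mathcal{X}$-side with the $A$-coordinate definitions used in Sections~\ref{sec:loopfunctions} and~\ref{sec:arcfunctions}, the compatibility $\pi^{\ast} \mathbb{I}(\ell) = \mathbb{I}'(\ell)$ for every lamination $\ell$, where $\mathbb{I}'$ denotes the quasi-lamination function on $\mathcal{A}$ introduced before Lemma~\ref{lem:skein}. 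Consequently, writing $f \in \mathcal{O}(\mathcal{X})$ as a $\mathbb{Q}$-linear combination of the $\mathbb{I}(\ell)$ is equivalent, via injectivity, to writing $\pi^{\ast} f$ as the same combination of $\mathbb{I}'(\ell)$ inside $\mathcal{O}(\mathcal{A})$.

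With this reduction in hand, the hypothesis $|\mathbb{M}| > 1$ enters through Theorem~\ref{thm:UequalsA}, which identifies $\mathcal{O}(\mathcal{A})$ with the cluster algebra $\mathscr{A}$. Thus $\pi^{\ast} f$ is a $\mathbb{Z}$-linear combination of products of cluster variables $A_{\gamma}$ indexed by arcs and boundary segments, and each such product equals $\mathbb{I}'(\mu)$ for the quasi-lamination $\mu = \sum_{k} w_{k} \gamma_{k}$ determined by the exponents. I would then reduce $\mathbb{I}'(\mu)$ to a $\mathbb{Z}$-linear combination of $\mathbb{I}'(\ell)$ for $\ell \in \mathcal{A}(\mathbb{Z}^{t})$ by induction on a complexity measure of $\mu$. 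If $\mu$ has a transverse self-intersection, Lemma~\ref{lem:skein} rewrites $\mathbb{I}'(\mu) = \mp \mathbb{I}'(\mu_{1}) \mp \mathbb{I}'(\mu_{2})$, where $\mu_{1}$ and $\mu_{2}$ have strictly fewer crossings. Once $\mu$ is crossing-free, homotopic open curves or boundary segments are consolidated into single weighted curves by equivalence relation~(3) of Definition~\ref{def:integrallamination}, while $K$ homotopic copies of a closed loop $c$ of weight $1$ produce a factor $\mathbb{I}'(c)^{K}$ which Lemma~\ref{lem:Chebyshev} rewrites as a nonnegative integer combination of $\mathbb{I}'(kc)$ for $k \leq K$. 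The endpoint of the induction is an expression as a $\mathbb{Z}$-linear combination of $\mathbb{I}'(\ell)$ for integral laminations $\ell \in \mathcal{A}(\mathbb{Z}^{t})$, and injectivity of $\pi^{\ast}$ lifts this to the desired expression for the original $f$.

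The main obstacle is organizing the combinatorial reduction with a complexity measure that strictly decreases at every step. One must choose it so that a skein resolution (which may create new pairs of homotopic components even while eliminating a crossing) is genuinely simplifying, and so that the subsequent Chebyshev consolidation of homotopic loops does not reintroduce crossings with surviving curves. A secondary technical point is the injectivity of $\pi^{\ast}$: verifying that, for unpunctured marked bordered surfaces with $|\mathbb{M}| > 1$, the signed adjacency matrix $(\varepsilon_{ji})$ including its boundary columns always has full row rank is essential, since otherwise the transfer of the problem from $\mathcal{X}$ to $\mathcal{A}$ would fail.
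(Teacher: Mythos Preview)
Your overall strategy---pull back to $\mathcal{O}(\mathcal{A})$, invoke Theorem~\ref{thm:UequalsA}, and reduce cluster monomials to laminations via skein and Chebyshev---is exactly the route the paper takes. But there is a genuine gap at the final step: you assert without argument that ``the endpoint of the induction is an expression as a $\mathbb{Z}$-linear combination of $\mathbb{I}'(\ell)$ for integral laminations $\ell \in \mathcal{A}(\mathbb{Z}^{t})$.'' The reduction you describe produces integral laminations, but not obviously ones in $\mathcal{A}(\mathbb{Z}^t)$. Recall that $\ell\in\mathcal{A}(\mathbb{Z}^t)$ requires all intersection numbers $\mu_i(\ell)$ to be even; for a generic lamination this fails, and then $\mathbb{I}(\ell)$ is only a Laurent polynomial in the $X_j^{1/2}$, so it does not lie in $\mathcal{O}(\mathcal{X})$ and is not in the image of $\pi^*$. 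Your injectivity argument therefore cannot lift such terms back.

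The paper closes this gap by tracking a $\mathbb{Z}/2\mathbb{Z}$-homology class through the entire reduction. Each monomial in the initial expression for $\pi^*f$ arises from a product of cross ratios $X_i=\prod_jA_j^{\varepsilon_{ij}}$, so the associated $1$-chain $\sum d_\gamma\gamma$ represents $0$ in $H_1(\mathbb{S},\mathbb{Z}/2\mathbb{Z})$. One then checks that this vanishing is preserved by the cluster exchange relations used to pass from $\mathscr{U}$ to $\mathscr{A}$, by skein resolution (the three local pictures are homologous), and by the Chebyshev expansion (this is exactly why the parity clause in Lemma~\ref{lem:Chebyshev} matters: only terms $k\ell$ with $k\equiv K\pmod 2$ appear). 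Finally one invokes the characterization that $\ell\in\mathcal{A}(\mathbb{Z}^t)$ if and only if $[\ell]=0$ in $H_1(\mathbb{S},\mathbb{Z}/2\mathbb{Z})$. Without this homological bookkeeping, or some substitute for it, your argument does not go through. The injectivity of $\pi^*$ and the well-ordering of the skein reduction, which you flag as the main obstacles, are by comparison routine.
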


\begin{proof}
Let $f\in\mathcal{O}(\mathcal{X})$ be any regular function on the cluster Poisson variety. Then for any ideal triangulation~$T$ of~$\Sigma$, we can express $f$ as a Laurent polynomial in the associated coordinates~$X_i$. Substituting $X_i=\prod_jA_j^{\varepsilon_{ij}}$, we get a Laurent polynomial 
\begin{align}
\label{eqn:expressioninU}
f=\sum_{\mathbf{d}=(d_\gamma)\in\mathbb{Z}^m}\lambda_{\mathbf{d}}\cdot\prod_{\gamma}A_\gamma^{d_\gamma}
\end{align}
where $\lambda_{\mathbf{d}}\in\mathbb{Q}$ and $\gamma$ runs over all edges of~$T$. Consider the term in this expression corresponding to a vector $\mathbf{d}=(d_\gamma)$. The sum $z_{\mathbf{d}}=\sum d_\gamma\gamma$ is a chain of unoriented singular 1-simplices defining a class $[z_{\mathbf{d}}]\in H_1(\mathbb{S},\mathbb{Z}/2\mathbb{Z})$, and since this term arises from a product of the cross ratios $X_i$, we in fact have $[z_{\mathbf{d}}]=0$.

Now the expression~\eqref{eqn:expressioninU} is an element of the upper cluster algebra $\mathscr{U}$. Since we assume $|\mathbb{M}|>1$, Theorem~\ref{thm:UequalsA} says that this upper cluster algebra $\mathscr{U}$ equals the cluster algebra~$\mathscr{A}$, and hence we can write 
\begin{align}
\label{eqn:expressioninA}
f=\sum_{\mathbf{e}}\zeta_\mathbf{e}\cdot\prod_\alpha A_\alpha^{e_\alpha}\prod_\beta A_\beta^{e_\beta}.
\end{align}
Here $\alpha$ runs over all arcs on $\Sigma$ (not necessarily belonging to the ideal triangulation $T$) and we have $e_\alpha\in\mathbb{Z}_{\geq0}$, while $\beta$ runs over all boundary segments and we have $e_\beta\in\mathbb{Z}$. The coefficient $\zeta_{\mathbf{e}}$ is a rational number. Let us consider the sum $w_{\mathbf{e}}=\sum e_\alpha\alpha+\sum e_\beta\beta$ which defines a class $[w_{\mathbf{e}}]\in H_1(\mathbb{S},\mathbb{Z}/2\mathbb{Z})$. The expression~\eqref{eqn:expressioninA} is obtained from~\eqref{eqn:expressioninU} by applying relations in the algebra~$\mathscr{U}$. These relations have the form 
\[
A_{\gamma_0} A_{\gamma_0'} = A_{\gamma_1}A_{\gamma_3}+A_{\gamma_2}A_{\gamma_4}
\]
where $\gamma_0,\gamma_0',\gamma_1,\dots,\gamma_4$ form a quadrilateral as illustrated in Figure~\ref{fig:quadrilateral}.
\begin{figure}[ht]
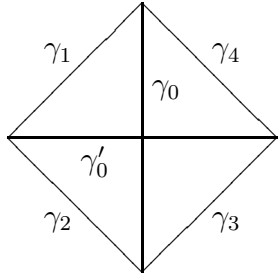
 \begin{center}
\[
\xy /l1.5pc/:
{\xypolygon4"A"{~:{(2,2):}}};
{\xypolygon4"B"{~:{(2,2):}~>{}}};
{\xypolygon4"C"{~:{(2.5,0):}~>{}}};
{"A1"\PATH~={**@{-}}'"A3"};
{"A2"\PATH~={**@{-}}'"A4"};
"C1"*{\gamma_2},
"C2"*{\gamma_3},
"C3"*{\gamma_4},
"C4"*{\gamma_1},
(0.5,-1)*{\gamma_0},
(2,0.5)*{\gamma_0'},
\endxy
\]
\caption{Labeling of a quadrilateral.\label{fig:quadrilateral}}
\end{center} \end{figure}
It is easy to see that the singular 1-chain $\gamma_0+\gamma_0'$ is homologous to both $\gamma_1+\gamma_3$ and $\gamma_2+\gamma_4$. We have already seen that $[z_{\mathbf{d}}]=0$ in $H_1(\mathbb{S},\mathbb{Z}/2\mathbb{Z})$, and therefore we have $[w_{\mathbf{e}}]=0$ as well.

Consider the term of~\eqref{eqn:expressioninA} corresponding to the vector $\mathbf{e}$. We associate a quasi-lamination~$\ell_{\mathbf{e}}$ to this term as follows. This quasi-lamination consists of $e_\alpha$ copies of the arc $\alpha$ and $|e_\beta|$ copies of the boundary segment $\beta$. All arcs have weight $+1$ while a boundary segment $\beta$ has weight~$\pm1$ according to the sign of~$e_\beta$. By what we have said so far, this quasi-lamination defines a singular 1-cycle representing the zero class in $H_1(\mathbb{S},\mathbb{Z}/2\mathbb{Z})$. The arcs of $\ell_{\mathbf{e}}$ may intersect, but we can apply Lemma~\ref{lem:skein} to write 
\[
\prod_\alpha A_\alpha^{e_\alpha}\prod_\beta A_\beta^{e_\beta}=\mathbb{I}'(\ell_{\mathbf{e}})=\sum_{i=1}^N\kappa_i\mathbb{I}'(\ell_i)
\]
where the $\ell_i$ are quasi-laminations without intersections and $\kappa_i\in\mathbb{Z}$. Note that the three quasi-laminations appearing in Lemma~\ref{lem:skein} are homologous, and therefore each $\ell_i$ represents the zero class in homology. Let $\ell_{i,1},\dots,\ell_{i,r}$ be the closed loops appearing in the quasi-lamination $\ell_i$ with multiplicities $K_{i,1},\dots,K_{i,r}$, respectively, and let $\ell_{i,0}$ be the quasi-lamination formed by all remaining arcs and boundary segments in~$\ell_i$. Then we can write 
\[
\mathbb{I}'(\ell_i)=\pm\mathbb{I}(\ell_{i,0})\mathbb{I}(\ell_{i,1})^{K_{i,1}}\dots\mathbb{I}(\ell_{i,r})^{K_{i,r}}.
\]
(Here, by abuse of notation, we use the same symbol for a lamination and for the quasi-lamination obtained by dragging the endpoints of all arcs and boundary segments along~$\partial\mathbb{S}$ in the counterclockwise direction until they hit the marked points.) By Lemma~\ref{lem:Chebyshev}, we can expand each power $\mathbb{I}(\ell_{i,s})^{K_{i,s}}$ as a linear combination of functions $\mathbb{I}(k_{i,s}\ell_{i,s})$ where $k_{i,s}$ has the same parity as~$K_{i,s}$. Thus we expand $\mathbb{I}'(\ell_i)$ as a linear combination of terms of the form 
\[
\mathbb{I}(\ell_{i,0})\mathbb{I}(k_{i,1}\ell_{i,1})\dots\mathbb{I}(k_{i,r}\ell_{i,r})=\mathbb{I}(\ell_{i,0}+k_{i,1}\ell_{i,1}+\dots+k_{i,r}\ell_{i,r}).
\]
The lamination appearing on the right hand side of this expression determines the same class in homology as $\ell_i$, namely the zero class. It is known (see~\cite{thesis}, Theorem~5.2.1) that an integral lamination lies in~$\mathcal{A}(\mathbb{Z}^t)$ if and only if its class in $H_1(\mathbb{S},\mathbb{Z}/2\mathbb{Z})$ is zero. Thus we have shown that each term in~\eqref{eqn:expressioninA} is a linear combination of the functions $\mathbb{I}(\ell)$ for $\ell\in\mathcal{A}(\mathbb{Z}^t)$. This completes the proof.
\end{proof}

Combining Propositions~\ref{prop:independent} and~\ref{prop:span}, we obtain the following.

\begin{theorem}
If $|\mathbb{M}|>1$ then the functions $\mathbb{I}(\ell)$ for $\ell\in\mathcal{A}(\mathbb{Z}^t)$ form a canonical $\mathbb{Q}$-vector space basis for the algebra $\mathcal{O}(\mathcal{X})$.
\end{theorem}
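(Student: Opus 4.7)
The plan is to deduce the theorem as an immediate corollary of the two preceding propositions. Proposition~\ref{prop:independent} gives linear independence of $\{\mathbb{I}(\ell):\ell\in\mathcal{A}(\mathbb{Z}^t)\}$, and Proposition~\ref{prop:span} gives spanning under the hypothesis $|\mathbb{M}|>1$; together these two facts say exactly that the $\mathbb{I}(\ell)$ form a $\mathbb{Q}$-basis. I would then note that this basis deserves to be called canonical, since the indexing set $\mathcal{A}(\mathbb{Z}^t)$ and the map $\ell\mapsto\mathbb{I}(\ell)$ are defined intrinsically on $\Sigma$, with no reference to a choice of ideal triangulation: the notion of integral lamination is topological, and the functions $\mathbb{I}(\ell)$ are built either from traces of monodromies of framed $PGL_2$-local systems or from products of $A$-coordinates, neither of which depends on a triangulation once one works on the cluster variety itself.

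Although this final step is purely formal, it is worth reviewing where each input does its work. Linear independence rests on a leading-term argument: by Lemma~\ref{lem:highestterm}, the top Laurent monomial of $\mathbb{I}(\ell)$ in any cluster chart records the tropical coordinates $a_i(\ell)$, which distinguish distinct laminations thanks to the bijection in Proposition~\ref{prop:acoordinates}. A lexicographic order on Laurent monomials then lets one peel off the basis elements one at a time and conclude by induction on the number of terms in a putative linear dependence.

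The spanning statement of Proposition~\ref{prop:span} is where the real content lies. That argument uses Theorem~\ref{thm:UequalsA} (the equality of upper and ordinary cluster algebras, which is precisely where $|\mathbb{M}|>1$ enters) to expand any $f\in\mathcal{O}(\mathcal{X})$ in the $A$-coordinates, reinterprets each resulting monomial as a quasi-lamination, resolves its self-intersections via the skein relations of Lemma~\ref{lem:skein}, and converts powers of loop trace functions into traces of powers via the Chebyshev identity of Lemma~\ref{lem:Chebyshev}. The delicate bookkeeping is to track the image class in $H_1(\mathbb{S},\mathbb{Z}/2\mathbb{Z})$, which vanishes for any product of the $X_i$ and is preserved by each of these transformations; this guarantees that the laminations appearing at the end lie in $\mathcal{A}(\mathbb{Z}^t)$ rather than in the larger set of all integral laminations. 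The hard part, therefore, is not the theorem itself but this combinatorial-homological bookkeeping internal to Proposition~\ref{prop:span}; once it has been carried out, the theorem follows in one line.
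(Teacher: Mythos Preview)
Your proposal is correct and matches the paper's own argument exactly: the theorem is stated immediately after Propositions~\ref{prop:independent} and~\ref{prop:span} with the one-line justification ``Combining Propositions~\ref{prop:independent} and~\ref{prop:span}, we obtain the following.'' Your additional commentary on why the basis is canonical and on the internal structure of the two propositions is accurate and helpful, though strictly more than the paper itself supplies at this point.
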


\section{Representations of quivers}
\label{sec:RepresentationsOfQuivers}

\subsection{The Jacobian algebra}

Here we collect some elementary facts about quivers with potential and their Jacobian algebras with an emphasis on the quivers with potential associated to triangulated surfaces. Further details on this material can be found in~\cite{DWZ1} and~\cite{L1}.

Recall that a quiver can be viewed as a quadruple $Q=(Q_0,Q_1,s,t)$ where $Q_0$ is the set of vertices, $Q_1$ is the set of arrows, and the maps $s:Q_1\rightarrow Q_0$ and $t:Q_1\rightarrow Q_0$ take an arrow to its source and target, respectively.

\begin{definition}
A \emph{path} in $Q$ is a sequence of arrows $a_0,\dots,a_k$ such that $s(a_i)=t(a_{i-1})$ for $i=1,\dots,k$. We will denote this path by $p=a_k\dots a_0$. The \emph{source} of $p$ is defined by $s(p)=s(a_0)$, and the \emph{target} is defined by $t(p)=t(a_k)$. Two paths $p$ and $q$ are said to be \emph{compatible} if $s(p)=t(q)$, and in this case the composition $pq$ is defined by juxtaposition. The \emph{path algebra} $\mathbb{C}Q$ is the algebra over $\mathbb{C}$ generated by paths in~$Q$, equipped with a multiplication where the product of paths $p$ and $q$ is the composition $pq$ if these paths are compatible and zero otherwise.
\end{definition}

Given a quiver $Q$, the set of arrows defines an ideal in the path algebra $\mathbb{C}Q$ and we write $\widehat{\mathbb{C}Q}$ for the completion of $\mathbb{C}Q$ with respect to this ideal. We regard $\widehat{\mathbb{C}Q}$ as a space of possibly infinite linear combinations of paths in~$Q$ with multiplication defined by concatenation.

\begin{definition}
A \emph{potential} for~$Q$ is an element of $\widehat{\mathbb{C}Q}$ all of whose terms are cyclic paths of positive length. Two potentials are said to be \emph{cyclically equivalent} if their difference lies in the closure of the vector subspace of $\widehat{\mathbb{C}Q}$ spanned by elements of the form $a_1\dots a_d-a_2\dots a_da_1$ where $a_1\dots a_d$ is a cyclic path. A \emph{quiver with potential} is a pair $(Q,W)$ where $Q$ is a quiver and $W$ is a potential for~$Q$ such that no two terms of $W$ are cyclically equivalent.
\end{definition}

In~\cite{L1}, Labardini-Fragoso associated a quiver with potential to any ideal triangulation~$T$ of a marked bordered surface $\Sigma$. We recall the definition of this quiver with potential in the case where $\Sigma$ is a marked bordered surface without punctures. Note that in this case if $t$ is a triangle of $T$ whose edges are all arcs, then there is an oriented 3-cycle $C(t)$ of~$Q(T)$ whose vertices are the edges of $t$. Thus we can define a natural potential $W(T)$ for~$Q(T)$ by 
\[
W(T)=\sum_{t}C(t)
\]
where the sum is over all triangles of $T$ whose edges are arcs. Labardini-Fragoso showed in~\cite{L1} that if two ideal triangulations are related by a flip, then the associated quivers with potential are related by a mutation in the sense of~\cite{DWZ1}.

\begin{definition}
If $a\in Q_1$ is any arrow and $p=a_1\dots a_d$ is a cyclic path in~$Q$, then the \emph{cyclic derivative} of~$p$ with respect to~$a$ is given by the expression 
\[
\partial_a(p)=\sum_{i:a_i=a}a_{i+1}\dots a_da_1\dots a_{i-1}.
\]
\end{definition}

This operation extends by linearity to give a map $\partial_a$ from the vector space of potentials to $\widehat{\mathbb{C}Q}$ such that any two cyclically equivalent potentials have the same image.

\begin{definition}
Let $(Q,W)$ be a quiver with potential. The \emph{Jacobian ideal} is defined as the closure of the two-sided ideal in $\widehat{\mathbb{C}Q}$ generated by the set $\{\partial_aW:a\in Q_1\}$. The \emph{Jacobian algebra} $J(Q,W)$ is defined as the quotient of $\widehat{\mathbb{C}Q}$ by the Jacobian ideal.
\end{definition}

\begin{definition}
An ideal $\mathfrak{a}\subseteq\mathbb{C}Q$ is said to be \emph{admissible} if we have $\mathfrak{b}^k\subseteq\mathfrak{a}\subseteq\mathfrak{b}^2$ for some $k\geq2$ where $\mathfrak{b}$ is the ideal generated by all arrows in $\mathbb{C}Q$.
\end{definition}

In the case of marked bordered surfaces without punctures, the results of Labardini-Fragoso give the following.

\begin{theorem}[\cite{L1}, Theorem~36]
\label{thm:Jfinitedimensional}
Let $(Q(T),W(T))$ be the quiver with potential associated to an ideal triangulation $T$ of a marked bordered surface without punctures. Then the Jacobian algebra $J(Q(T),W(T))$ is finite dimensional. Moreover, the ideal $\mathfrak{a}$ generated by the set $\{\partial_aW(T):a\in (Q(T))_1\}$ in the (uncompleted) path algebra $\mathbb{C}Q(T)$ is admissible, and $J(Q(T),W(T))\cong\mathbb{C}Q(T)/\mathfrak{a}$.
\end{theorem}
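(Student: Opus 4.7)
The plan is to extract the arrows and 3-cycles of $(Q(T),W(T))$ from the combinatorics of the triangulation, identify the Jacobian relations explicitly, and then show that these relations are strong enough to force every sufficiently long path in $Q(T)$ to vanish.

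First, I would set up the following dictionary. Vertices of $Q(T)$ are arcs of $T$, and each arrow corresponds to a \emph{corner} of a triangle of $T$: for a triangle $\tau$ with two arc-sides $i, j$ meeting at a marked point, there is an arrow $i \to j$ (with direction determined by the orientation of $\mathbb{S}$). The potential $W(T) = \sum_\tau C(\tau)$ sums 3-cycles, one for each triangle $\tau$ whose three sides are all arcs; call such $\tau$ an \emph{interior} triangle. For an interior triangle $\tau$ with 3-cycle $cba$ (arrows composed around $\tau$), the cyclic derivatives give $\partial_a W = cb$, $\partial_b W = ac$, $\partial_c W = ba$, and setting these to zero yields the Jacobian relations: any length-two path traversing two consecutive corners of an interior triangle vanishes in $J(Q(T),W(T))$.

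Second, I would note that the ideal $\mathfrak{a} \subseteq \mathbb{C}Q(T)$ generated by $\{\partial_a W(T) : a \in Q(T)_1\}$ is contained in $\mathfrak{b}^2$ by construction, since each generator is a path of length exactly two. The heart of the theorem is then to exhibit an integer $k \geq 2$ with $\mathfrak{b}^k \subseteq \mathfrak{a}$, which simultaneously proves admissibility, finite-dimensionality, and the identification $J(Q(T),W(T)) \cong \mathbb{C}Q(T)/\mathfrak{a}$ (because the $\mathfrak{b}$-adic completion is unaffected by quotienting by an admissible ideal with finite-dimensional quotient).

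Third, I would prove the bound by a combinatorial/topological argument on the surface. A path $a_n \cdots a_1$ corresponds to a sequence of arcs $j_0, j_1, \ldots, j_n$ together with triangles $\tau_1, \ldots, \tau_n$ where $\tau_k$ contains both $j_{k-1}$ and $j_k$ as sides. Nonvanishing modulo $\mathfrak{a}$ forces $\tau_k \neq \tau_{k+1}$ for all $k$, which means the walk must genuinely cross from one triangle into a distinct neighboring one at every step, never making two consecutive turns within a single interior triangle. I would then exploit the fact that $\Sigma$ has \emph{no punctures}: every marked point lies on $\partial\mathbb{S}$, so the arcs incident to any marked point are \emph{linearly} ordered (with the two extremes bounded by boundary segments) rather than cyclically ordered. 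Consequently, the rotations a walk can accumulate at any single marked point are bounded by the valence at that point, and a pigeonhole argument over the finitely many triangles and corners then yields an explicit $k$ with $\mathfrak{b}^k \subseteq \mathfrak{a}$.

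The main obstacle is precisely this bound. This is where the unpunctured hypothesis is decisive: if one had a puncture in the interior, the arcs around it would form a cyclic sequence and a walk could spiral indefinitely without triggering a relation, breaking finite-dimensionality (which is why for punctured surfaces the definition of $W(T)$ must include additional cycle terms around punctures). The technical work consists of formalizing how nonzero paths correspond to "non-backtracking" walks in an appropriate sense, identifying the configurations in which such a walk is forced to reuse a triangle corner and hence fall into $\mathfrak{a}$, and translating this into the explicit estimate on path length. Once the inclusion $\mathfrak{b}^k \subseteq \mathfrak{a} \subseteq \mathfrak{b}^2$ is in hand, admissibility is immediate, the quotient $\mathbb{C}Q(T)/\mathfrak{a}$ is finite-dimensional, and the canonical surjection $\widehat{\mathbb{C}Q(T)} \twoheadrightarrow \mathbb{C}Q(T)/\mathfrak{a}$ descends to the asserted isomorphism $J(Q(T),W(T)) \cong \mathbb{C}Q(T)/\mathfrak{a}$.
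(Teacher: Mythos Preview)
The paper does not prove this result; it is quoted from Labardini-Fragoso \cite{L1} without argument, so there is no ``paper's own proof'' to compare against. Your outline is essentially the standard argument one finds in \cite{L1} (and in the closely related work of Assem--Br\"ustle--Charbonneau-Jodoin--Plamondon \cite{ABCP}), and it is correct.

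One refinement is worth making, because your third step is slightly vague about the mechanism and invokes more than is needed. You correctly note that in a path $a_n\cdots a_1$ not lying in~$\mathfrak a$ the triangles satisfy $\tau_k\neq\tau_{k+1}$. The orientation convention on the arrows (each $a_k$ goes from $j_{k-1}$ to the counterclockwise-next side $j_k$ of~$\tau_k$) then forces something stronger than merely ``crossing into a distinct neighbouring triangle'': since $\tau_k$ and $\tau_{k+1}$ lie on opposite sides of $j_k$, the corner vertex of $a_k$ and the corner vertex of $a_{k+1}$ are the \emph{same} endpoint of~$j_k$. Inductively, every corner of the path lies at a single marked point~$p$, and the path simply walks through consecutive arc-ends in the fan at~$p$, always in the same angular direction. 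Because $p$ lies on $\partial\mathbb{S}$, this fan is a finite \emph{linear} sequence bounded by two boundary-segment ends, so the length of any nonvanishing path is at most one less than the number of arc-ends at~$p$. This yields $\mathfrak b^k\subseteq\mathfrak a$ directly, with $k$ one more than the maximal number of arc-ends at any marked point; no separate pigeonhole over triangles and corners is required. Your diagnosis of what fails at a puncture is exactly right: the fan becomes cyclic and the walk can spiral indefinitely.
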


As a consequence of this result, we see that $J(Q(T),W(T))$ belongs to a class of algebras whose representation theory is particularly well studied.

\begin{definition}
A finite dimensional algebra $A$ is called a \emph{string algebra} if there exists a quiver $Q$ and an admissible ideal $\mathfrak{a}$ such that $A\cong\mathbb{C}Q/\mathfrak{a}$ and the following conditions hold:
\begin{enumerate}
\item If $i$ is a vertex of $Q$, then there are at most two arrows that start at~$i$ and at most two arrows that end at~$i$.
\item For each arrow $a$, there is at most one arrow $b$ and at most one arrow $c$ such that $ab\not\in\mathfrak{a}$ and $ca\not\in\mathfrak{a}$.
\end{enumerate}
\end{definition}

The following concepts play a distinguished role in the representation theory of a string algebra.

\begin{definition}
For any arrow $a$ of $Q$, we denote by $a^{-1}$ a formal inverse of~$a$ with $s(a^{-1})=t(a)$ and $t(a^{-1})=s(a)$. A word $w=a_Na_{N-1}\dots a_1$ in the arrows and their formal inverses is called a \emph{string} if we have $t(a_i)=s(a_{i+1})$ and $a_{i+1}\neq a_i^{-1}$ for all indices~$i$, and no subword or its inverse belongs to the ideal~$\mathfrak{a}$. A string $w$ is \emph{cyclic} if $s(a_1)=t(a_N)$. A \emph{band} is defined to be a cyclic string $w$ such that each power $w^k$ is a string but $w$ itself is not a proper power of any string.
\end{definition}

Below we will be interested in representations of a string algebra associated to strings and bands.

\begin{definition}
A \emph{representation} $M$ of a quiver $Q=(Q_0,Q_1,s,t)$ consists of a finite-dimensional $\mathbb{C}$-vector space $M(i)$ for each $i\in Q_0$ and a $\mathbb{C}$-linear map $M(a):M(s(a))\rightarrow M(t(a))$ for each $a\in Q_1$. The \emph{dimension vector} of $M$ is the tuple of nonnegative integers given by 
\[
\underline{\dim} M = (\dim_{\mathbb{C}} M(i))_{i\in Q_0}.
\]
\end{definition}

Representations of a quiver $Q$ can be viewed as modules over the path algebra $\mathbb{C}Q$. Indeed, given a representation $M$ of the quiver $Q$, we get a module over $\mathbb{C}Q$ whose underlying vector space is $M=\bigoplus_{i\in Q_0}M(i)$. One can show that this correspondence between representations of~$Q$ and modules over~$\mathbb{C}Q$ in fact provides an equivalence of categories.

By a \emph{morphism} $f:Q'\rightarrow Q$ of quivers $Q'=(Q_0',Q_1',s',t')$ and $Q=(Q_0,Q_1,s,t)$, we mean a pair of maps $f_0:Q_0'\rightarrow Q_0$ and $f_1:Q_1'\rightarrow Q_1$ such that $f_0\circ s'=s\circ f_1$ and $f_0\circ t'=t\circ f_1$. Given a morphism $f:Q'\rightarrow Q$ and a representation $M'$ of $Q'$, we get a corresponding representation $f_*(M')$ of $Q$ where, for any $i\in Q_0$ and any $a\in Q_1$, we define 
\[
(f_*(M'))(i)\coloneqq\bigoplus_{i'\in f_0^{-1}(i)}M'(i'), \qquad (f_*(M'))(a)\coloneqq\bigoplus_{a'\in f_1^{-1}(a)}M'(a').
\]
Note that if $A\cong\mathbb{C}Q/\mathfrak{a}$ is a string algebra and $w$ is a string for~$A$, then $w$ determines in a natural way a quiver $Q'$ of Dynkin type $A$ and a morphism of quivers $f:Q'\rightarrow Q$ satisfying the following properties:
\begin{enumerate}
\item If $a$,~$b\in Q_1'$ are arrows with $a\neq b$ and $s'(a)=s'(b)$, then $f_1(a)\neq f_1(b)$.
\item If $a$,~$b\in Q_1'$ are arrows with $a\neq b$ and $t'(a)=t'(b)$, then $f_1(a)\neq f_1(b)$.
\end{enumerate}
If $M'$ is the representation of this quiver $Q'$ defined by setting $M'(i)=\mathbb{C}$ for all vertices $i$ and $M'(a)=\mathrm{id}_{\mathbb{C}}$ for all arrows~$a$, then this construction gives a representation $f_*(M')$ of~$Q$ which we can regard as a module over~$A$.

\begin{definition}
The module defined by the above construction is called the \emph{string module} associated to~$w$.
\end{definition}

Similarly, if $w$ is a band, then there is a quiver $Q'$ of affine type $\widetilde{A}$ and a morphism $f:Q'\rightarrow Q$ satisfying the properties~1 and~2 above. For any choice of $n\in\mathbb{Z}_{>0}$ and $\phi\in\Aut(\mathbb{C}^n)$, we can define a representation $M'$ of this quiver $Q'$ by setting $M'(i)=\mathbb{C}^n$ for all vertices~$i$ and setting $M'(a)=\mathrm{id_{\mathbb{C}^n}}$ for all but one of the arrows $a\in Q_1'$ and $M'(a)=\phi$ for the remaining arrow $a\in Q_1'$. We then get the module $f_*(M')$ over~$A$.

\begin{definition}
An indecomposable module defined by the above construction is called a \emph{band module} associated to the band~$w$ and the data $n\in\mathbb{Z}_{>0}$ and~$\phi\in\Aut(\mathbb{C}^n)$.
\end{definition}

It is known~\cite{BrustleZhang} that every finite-dimensional indecomposable module over a string algebra is isomorphic to a string module or a band module. The string module associated to a string $w$ is isomorphic to a string module associated to the inverse of~$w$. Similarly, the band module associated to a band $w$ and $n\in\mathbb{Z}_{>0}$, $\phi\in\Aut(\mathbb{C}^n)$ is isomorphic to band modules obtained by reversing the orientation of $w$ and applying cyclic permutations. Apart from these, there are no isomorphisms between string and band modules. Note that in order for the band module defined above to be indecomposable, the automorphism $\phi$ must be expressible as an $n\times n$ Jordan block for some eigenvalue $\lambda\in\mathbb{C}^*$. The band module is then denoted $M(w,n,\lambda)$.

Finally, let us discuss the Grassmannian of subrepresentations.

\begin{definition}
Let $M$ be a representation of a quiver $Q$. By a \emph{subrepresentation}, we mean a submodule $N$ of~$M$, regarded as a module over the path algebra. Equivalently, $N$ consists of a subspace $N(i)\subseteq M(i)$ for each $i\in Q_0$ such that 
\[
M(a)(N(i))\subseteq N(j)
\]
for every arrow $a:i\rightarrow j$ in~$Q_1$. For any tuple $\mathbf{e}=(e_i)_{i\in Q_0}$ of nonnegative integers, we define the \emph{quiver Grassmannian} $\Gr_{\mathbf{e}}(M)$ to be the set of subrepresentations of $M$ with dimension vector~$\mathbf{e}$.
\end{definition}

The quiver Grassmannian is a closed subset of the product $\prod_{i=1}^n\Gr_{e_i}(M(i))$ of ordinary Grassmannians. Hence $\Gr_{\mathbf{e}}(M)$ is a complex projective variety. We will be particularly interested in the following subset, which was studied in~\cite{Dupont,CerulliEsposito,CerulliDupontEsposito} in the special case where $Q$ is affine.

\begin{definition}
If $M$ is an indecomposable module over a string algebra $\mathbb{C}Q/\mathfrak{a}$, then we write $\Gr_{\mathbf{e}}^\circ(M)$ for the set of points $N\in\Gr_{\mathbf{e}}(M)$ such that $\Ext_{\mathbb{C}Q}^1(N,M/N)=0$. If $Q$ is affine, then $\Gr_{\mathbf{e}}^\circ(M)$ is called the \emph{transverse quiver Grassmannian}.
\end{definition}

\begin{lemma}
The set $\Gr_{\mathbf{e}}^\circ(M)$ is constructible in $\Gr_{\mathbf{e}}(M)$.
\end{lemma}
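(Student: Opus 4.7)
The plan is to show the stronger statement that $\Gr_{\mathbf{e}}^\circ(M)$ is in fact \emph{open} in $\Gr_{\mathbf{e}}(M)$, which trivially implies constructibility. The strategy is a standard upper semicontinuity argument for $\Ext^1$ using the fact that, since $\Ext$ here is taken in the category of representations of the path algebra $\mathbb{C}Q$ (i.e.\ with no relations imposed), one has a very explicit 4-term complex computing $\Hom_{\mathbb{C}Q}$ and $\Ext^1_{\mathbb{C}Q}$.

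First I would recall that for any two representations $X$ and $Y$ of $Q$ there is an exact sequence
\[
0 \longrightarrow \Hom_{\mathbb{C}Q}(X,Y) \longrightarrow \bigoplus_{i \in Q_0}\Hom_{\mathbb{C}}(X(i),Y(i)) \stackrel{d_{X,Y}}{\longrightarrow} \bigoplus_{a \in Q_1}\Hom_{\mathbb{C}}(X(s(a)),Y(t(a))) \longrightarrow \Ext^1_{\mathbb{C}Q}(X,Y) \longrightarrow 0,
\]
where $d_{X,Y}$ sends $(f_i)$ to the tuple whose $a$-component is $f_{t(a)}\circ X(a) - Y(a)\circ f_{s(a)}$. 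Next I would introduce on the projective variety $\Gr_{\mathbf{e}}(M)$ the tautological subrepresentation $\mathcal{N}\subseteq M\otimes\mathcal{O}_{\Gr_{\mathbf{e}}(M)}$ and its quotient $\mathcal{Q}=(M\otimes\mathcal{O})/\mathcal{N}$, which are vector bundles equipped with $Q$-representation structures whose fibers at a point $[N]\in\Gr_{\mathbf{e}}(M)$ are $N$ and $M/N$ respectively.

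Applying the construction of $d_{X,Y}$ with $X=\mathcal{N}$ and $Y=\mathcal{Q}$ produces a morphism of locally free sheaves
\[
D\colon\bigoplus_{i\in Q_0}\mathcal{H}om_{\mathcal{O}}(\mathcal{N}(i),\mathcal{Q}(i)) \longrightarrow \bigoplus_{a\in Q_1}\mathcal{H}om_{\mathcal{O}}(\mathcal{N}(s(a)),\mathcal{Q}(t(a)))
\]
whose source and target have constant ranks, determined by $\mathbf{e}$ and the dimension vector of $M$. The fiberwise cokernel of $D$ at $[N]$ is precisely $\Ext^1_{\mathbb{C}Q}(N,M/N)$. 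Since the Euler form identity
\[
\dim_{\mathbb{C}}\Hom_{\mathbb{C}Q}(N,M/N) - \dim_{\mathbb{C}}\Ext^1_{\mathbb{C}Q}(N,M/N) = \sum_{i\in Q_0}e_i\,(\dim M(i)-e_i) - \sum_{a\in Q_1}e_{s(a)}\,(\dim M(t(a))-e_{t(a)})
\]
shows the difference on the left is a constant function of $[N]$, the vanishing of $\Ext^1$ at $[N]$ is equivalent to $\dim_{\mathbb{C}}\Hom_{\mathbb{C}Q}(N,M/N)$ attaining its minimum, equivalently to the rank of the fiber map $D|_{[N]}$ attaining its maximal value. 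I would then invoke the standard fact that the locus where a morphism between two vector bundles has maximal rank is Zariski open; applying this to $D$ yields that $\Gr_{\mathbf{e}}^\circ(M)$ is open in $\Gr_{\mathbf{e}}(M)$ and thus constructible.

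There is essentially no hard obstacle here; the only thing to be careful about is that the $\Ext^1$ in the definition is taken in $\Mod\,\mathbb{C}Q$ rather than $\Mod\,J(Q,W)$, so that the hereditary 4-term complex above is available and no higher $\Ext$ corrections intervene. Once that observation is in place, the upper-semicontinuity of fiberwise cokernel dimension of a map of vector bundles finishes the argument.
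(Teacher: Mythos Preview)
Your argument is correct and in fact proves the stronger statement that $\Gr_{\mathbf{e}}^\circ(M)$ is \emph{open} in $\Gr_{\mathbf{e}}(M)$. One small point of phrasing: the equivalence ``$\Ext^1=0$ iff the rank of $D$ is maximal'' is cleanest if ``maximal'' means ``equal to the rank of the target bundle'' (i.e.\ $D$ is fibrewise surjective); then openness follows immediately from the fact that the surjectivity locus of a morphism of vector bundles is Zariski open, and the Euler-form detour is not strictly needed.

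The paper takes a different route. Rather than arguing semicontinuity directly, it invokes the argument of Cerulli Irelli--Dupont--Esposito to identify $\Gr_{\mathbf{e}}^\circ(M)$ with the set of smooth points of $\Gr_{\mathbf{e}}(M)$ lying on irreducible components of the minimal possible dimension, and then observes that such a set is constructible. Your approach is more elementary and self-contained, and yields the sharper conclusion (open rather than merely constructible). The paper's approach, on the other hand, buys an explicit geometric description of $\Gr_{\mathbf{e}}^\circ(M)$ in terms of the singular locus and the component structure of the full Grassmannian; this description is exactly what is used later (for instance in the annulus computation, where one needs to know that $\Gr_{\mathbf{e}}^\circ(M)$ coincides with the smooth points in the zero-dimensional components).
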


\begin{proof}
The same argument used in Section~4 of~\cite{CerulliDupontEsposito} shows that for any finite-dimensional representation $M$, the set $\Gr_{\mathbf{e}}^\circ(M)$ coincides with the set of smooth points in~$\Gr_{\mathbf{e}}(M)$ contained in irreducible components of the minimal possible dimension. In particular, it is a constructible set.
\end{proof}

\subsection{Band modules}

Let $\Sigma$ be a marked bordered surface without punctures and $T$ an ideal triangulation of~$\Sigma$. Suppose $\ell$ is an integral lamination on $\Sigma$ consisting of a single closed curve $c$ of weight~$k$. We can choose an orientation for this curve $c$ and assume that it intersects each edge of the triangulation transversely in the minimal number of points. By considering the arcs of the triangulation that $c$ intersects in order as we travel around the curve according to the orientation, we obtain in an obvious way a band $w=w(\ell)$ for the quiver~$Q(T)$.

Now for any choice of $\lambda\in\mathbb{C}^*$, the construction described above gives a corresponding band module $M(w,k,\lambda)$ over the Jacobian algebra. We will be interested in varieties parametrizing submodules of this band module. Our results will turn out to be independent of~$\lambda$, and so we fix this parameter once and for all.

\begin{definition}
We write $M_{T,\ell}$ for the band module associated to the lamination~$\ell$ and a fixed choice of $\lambda\in\mathbb{C}^*$ by the above construction.
\end{definition}

We will first consider the case where $\Sigma$ is an annulus and $\ell$ consists of a single closed curve $c$ of weight $k=1$ that winds once around this annulus.

\begin{lemma}
\label{lem:computeEulercharacteristic}
Suppose $\Sigma$ is an annulus and $\ell$ is the integral lamination consisting of a single closed curve $c$ of weight~1 that winds once around the annulus. Let $\mathbf{e}$ be a dimension vector such that $e_i\leq\dim_\mathbb{C} M_{T,\ell}(i)$ for every vertex $i$ of $Q(T)$. Then we have $\chi(\Gr_{\mathbf{e}}^\circ(M_{T,\ell}))=0$ if there exists an arrow $a:i\rightarrow j$ in~$Q(T)$ such that 
\begin{enumerate}
\item $\dim_\mathbb{C} M_{T,\ell}(i)=1$ and $\dim_\mathbb{C} M_{T,\ell}(j)=1$.
\item $e_i=1$ and $e_j=0$.
\end{enumerate}
Otherwise, we have $\chi(\Gr_{\mathbf{e}}^\circ(M_{T,\ell}))=1$.
\end{lemma}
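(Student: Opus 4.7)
The plan is to analyze $\Gr^\circ_{\mathbf{e}}(M)$ for $M = M_{T,\ell}$ by combining the combinatorial description of submodules of a band module with the hereditary computation of $\Ext^1$ over the path algebra $\mathbb{C}Q(T)$. First, I exploit the fact that the band $w = w(\ell)$ determines a morphism of quivers $f \colon Q' \to Q(T)$ from a cyclic quiver $Q'$, with $M = f_*(M')$ the pushforward of the uniform one-dimensional representation of $Q'$. A submodule $N \subseteq M$ is then a family of subspaces $N(i) \subseteq M(i)$ preserved by all arrow maps $M(a)$, and these maps are nonzero precisely when the arrow $a$ appears (in either orientation) in the band word.

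Second, I apply the standard hereditary four-term exact sequence
\[
0 \to \Hom(N, M/N) \to \bigoplus_{i \in Q(T)_0} \Hom(N_i, (M/N)_i) \to \bigoplus_{a \in Q(T)_1} \Hom(N_{s(a)}, (M/N)_{t(a)}) \to \Ext^1(N, M/N) \to 0
\]
to compute the transverse obstruction. The key observation is that at each vertex $i$ with $\dim M(i) = 1$, one of $N(i)$ or $(M/N)(i)$ is automatically zero, so $\Hom(N_i, (M/N)_i)$ vanishes there; consequently a contribution to $\Ext^1$ coming from an arrow $a \colon i \to j$ between two dim-$1$ vertices with $e_i = 1$ and $e_j = 0$ cannot be cancelled by the middle term of the sequence.

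Third, I carry out the case analysis. If there exists an arrow $a \colon i \to j$ in $Q(T)$ between two dim-$1$ vertices with $e_i = 1$ and $e_j = 0$, then either $a$ appears in the band, in which case $M(a)$ is an isomorphism and the submodule condition $M(a)(N(i)) \subseteq N(j)$ fails so that $\Gr_{\mathbf{e}}(M) = \emptyset$, or $a$ does not appear in the band, in which case the candidate submodule exists but the arrow contributes a nonzero class to $\Ext^1(N,M/N)$ by the previous paragraph, so $N \notin \Gr^\circ_{\mathbf{e}}(M)$; either way $\chi = 0$. Conversely, if no such arrow exists, I show that the unique submodule with dimension vector $\mathbf{e}$ determined by the band combinatorics is transverse, giving $\chi = 1$.

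The main difficulty will be handling vertices $i$ with $\dim M(i) > 1$, which can arise when $T$ contains arcs that wrap around the inner boundary of the annulus and are therefore crossed by $c$ more than once. At such a vertex the choice of $N(i)$ is a priori parametrized by a nontrivial Grassmannian rather than a single point, so one must check that the rigidifying constraints coming from the neighbouring dim-$1$ vertices in the band pin $N(i)$ down to a distinguished subspace of $M(i)$ determined by the cyclic structure of the band word. This is the step I expect to be the main obstacle, but once it is established the transverse locus $\Gr^\circ_{\mathbf{e}}(M)$ reduces to a single point in the non-empty case, and the dichotomy $\chi \in \{0,1\}$ claimed by the lemma follows.
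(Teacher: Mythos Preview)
Your anticipated main obstacle does not exist: in this setting every vertex satisfies $\dim_{\mathbb C} M_{T,\ell}(i)\le 1$. The hypothesis is that $c$ winds exactly once around the annulus. Lifting to the universal cover (a strip $\mathbb{R}\times[0,1]$), an arc from one boundary component to the other lifts to a path joining the two edges of the strip and hence meets the lift of $c$ (a horizontal line $\mathbb{R}\times\{1/2\}$) in a single point, regardless of how much the arc winds; a simple arc with both endpoints on the same boundary component lifts to a path between adjacent lifts of the marked point and can be kept entirely on one side of that line, so it meets $c$ not at all. Thus $c$ crosses every arc of $T$ at most once, and $M_{T,\ell}(i)\in\{0,\mathbb C\}$ for all $i$. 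Your claim that arcs ``wrapping around the inner boundary'' are crossed multiply is simply false for a once-winding core curve.

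Once this is in hand the whole argument collapses: for each vertex there is at most one subspace $N(i)\subseteq M(i)$ of the prescribed dimension, so $\Gr_{\mathbf e}(M_{T,\ell})$ is a priori a single point or empty. The paper's route from here is different from yours. Rather than compute $\Ext^1$ via the four-term sequence, it invokes the description of $\Gr^\circ_{\mathbf e}$ from Cerulli~Irelli--Dupont--Esposito as the set of smooth points lying in irreducible components of minimal dimension; since $\Gr_{\mathbf e}(M_{T,\ell})$ is already zero-dimensional, this gives $\Gr^\circ_{\mathbf e}(M_{T,\ell})=\Gr_{\mathbf e}(M_{T,\ell})$ immediately, and the dichotomy in the lemma reduces to whether the unique candidate tuple $(N(i))$ is closed under the arrow maps. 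Your four-term-sequence computation would also succeed once the dimension bound is in place, and is arguably more self-contained than citing an external result; but the case split ``arrow in the band versus not in the band'' and the entire discussion of rigidifying higher-dimensional subspaces are addressing a situation that never arises.
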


\begin{proof}
It follows from our assumptions that for each vertex $i$ of~$Q(T)$, we can have either $M_{T,\ell}(i)=\mathbb{C}$ or $M_{T,\ell}(i)=0$. If $a:i\rightarrow j$ is an arrow in~$Q(T)$ connecting vertices $i$ and~$j$, then either $M_{T,\ell}(a)$ is an isomorphism or $M_{T,\ell}(a):0\rightarrow\mathbb{C}$ or $M_{T,\ell}(a):\mathbb{C}\rightarrow0$ or $M_{T,\ell}(a):0\rightarrow0$. In the latter three cases, $M_{T,\ell}(a)$ restricts uniquely to $0\rightarrow0$, whereas the in first case there is a restriction to $0\rightarrow\mathbb{C}$ but not to $\mathbb{C}\rightarrow0$. It follows that $\Gr_{\mathbf{e}}(M_{T,\ell})$ is a single point, except when there is an arrow satisfying properties~1 and~2, in which case we have $\Gr_{\mathbf{e}}(M_{T,\ell})=\emptyset$. By Corollary~1.2 of~\cite{CerulliDupontEsposito}, the transverse quiver Grassmannian $\Gr_{\mathbf{e}}^\circ(M_{T,\ell})$ coincides with the set of smooth points contained in irreducible components of $\Gr_{\mathbf{e}}(M_{T,\ell})$ of dimension~0. Thus, under our assumptions, $\Gr_{\mathbf{e}}^\circ(M_{T,\ell})=\Gr_{\mathbf{e}}(M_{T,\ell})$.
\end{proof}

Suppose $\ell$ is a lamination consisting of a single closed curve $c$ of weight~1 and this curve intersects each edge of the triangulation transversely in the minimal number of points. In Lemma~\ref{lem:nonperipheralFpolynomial}, we saw that $\mathbb{I}(\ell)$ can be expressed as 
\[
\mathbb{I}(\ell)=F_\ell(X_1,\dots,X_n)\cdot X_1^{h_{\ell,1}}\dots X_n^{h_{\ell,n}}
\]
for a certain polynomial $F(X_1,\dots,X_n)$. To define this polynomial, we choose an orientation for the curve $c$ and consider the arcs $i_1,\dots,i_s$ that the curve $c$ passes through, ordered according to the chosen orientation. If $c$ turns to the left after crossing $i_k$, we define 
\[
S_k=
\left(
\begin{array}{cc}
X_{i_k} & X_{i_k} \\
0 & 1
\end{array}
\right),
\]
and if $c$ turns to the right, we define 
\[
S_k=
\left(
\begin{array}{cc}
X_{i_k} & 0 \\
1 & 1
\end{array}
\right).
\]
Then the polynomial $F(X_1,\dots,X_n)$ is defined as the trace of the product $S_1\dots S_s$.

Note that the above matrices factor as 
\[
\left(
\begin{array}{cc}
X_{i_k} & X_{i_k} \\
0  & 1 
\end{array}
\right)
=
\left(
\begin{array}{cc}
X_{i_k} & 0 \\
0  & 1 
\end{array}
\right)
\left(
\begin{array}{cc}
1 & 1 \\
0  & 1 
\end{array}
\right)
\]
and
\[
\left(
\begin{array}{cc}
X_{i_k} & 0 \\
1  & 1 
\end{array}
\right)
=
\left(
\begin{array}{cc}
X_{i_k} & 0 \\
0  & 1 
\end{array}
\right)
\left(
\begin{array}{cc}
1 & 0 \\
1  & 1 
\end{array}
\right).
\]
Let us denote the first matrix factor on the right hand side of either of these factorizations by~$P_k$ and denote the second factor by $Q_k$. Thus, for either form of the matrix $S_k=P_kQ_k$, we can write 
\[
Q_k=
\left(
\begin{array}{cc}
q_{11}^{(k)} & q_{10}^{(k)} \\
q_{01}^{(k)}  & q_{00}^{(k)} 
\end{array}
\right)
\]
with $q_{ij}^{(k)}=0$ or~1 for all indices $i$, $j$, and~$k$.

To establish a relationship between the $\Gr_{\mathbf{e}}^\circ(M_{T,\ell})$ and $\mathbb{I}(\ell)$, we employ a version of the state-sum description of $\mathbb{I}(\ell)$ given in~\cite{BonahonWong}.

\begin{definition}
A \emph{state} for $c$ is an assignment of numbers $\sigma_1,\dots,\sigma_s\in\{0,1\}$ to the points of intersection between $c$ and the arcs $i_1,\dots,i_s$ in order.
\end{definition}

\begin{lemma}
\label{lem:statesum}
We have 
\[
F_\ell(X_1,\dots,X_n)=\sum_\sigma q_{\sigma_1\sigma_2}^{(1)}q_{\sigma_2\sigma_3}^{(2)}\dots q_{\sigma_s\sigma_1}^{(s)}X_{i_1}^{\sigma_1}X_{i_2}^{\sigma_2}\dots X_{i_s}^{\sigma_s}
\]
where the sum is over all states $\sigma$.
\end{lemma}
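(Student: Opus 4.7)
The plan is to prove this identity by a direct expansion of the trace that defines $F_\ell$. Since $S_k = P_k Q_k$ for each $k$, we have
\[
F_\ell(X_1,\dots,X_n) = \Tr(P_1 Q_1 P_2 Q_2 \cdots P_s Q_s),
\]
and writing out this trace as a sum of products of matrix entries introduces $2s$ summation indices $\sigma_1,\tau_1,\ldots,\sigma_s,\tau_s \in \{0,1\}$, one for each factor in the product, with the cyclic identification $\sigma_{s+1} = \sigma_1$ built into the trace.

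The key step is to exploit the diagonality of the matrices $P_k$. Indexing the rows and columns of a $2\times 2$ matrix by $\{0,1\}$ in such a way that $(P_k)_{00}=1$ and $(P_k)_{11}=X_{i_k}$, we can write $(P_k)_{\sigma\tau} = \delta_{\sigma\tau} X_{i_k}^{\sigma}$. Substituting into the expanded trace forces $\tau_k = \sigma_k$ for every $k$ and collapses the double sum onto one indexed only by $\sigma_1,\ldots,\sigma_s$. After collecting the scalar factors $X_{i_k}^{\sigma_k}$ and using the definition $q^{(k)}_{ij} = (Q_k)_{ij}$, what remains is precisely
\[
\sum_{\sigma_1,\ldots,\sigma_s} q^{(1)}_{\sigma_1 \sigma_2} q^{(2)}_{\sigma_2 \sigma_3} \cdots q^{(s)}_{\sigma_s \sigma_1}\, X_{i_1}^{\sigma_1} X_{i_2}^{\sigma_2} \cdots X_{i_s}^{\sigma_s}.
\]

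To finish, we observe that a tuple $(\sigma_1,\ldots,\sigma_s)\in\{0,1\}^s$ is by definition a state for $c$, since the crossings of $c$ with the arcs $i_1,\ldots,i_s$ are visited in this cyclic order. There is no substantial obstacle to this argument; it is essentially a bookkeeping exercise in the style of standard state-sum expansions of matrix products. The only subtle point is fixing the $\{0,1\}$-indexing convention for rows and columns so that the diagonal entries of $P_k$ align with the exponents appearing in the claimed formula. Once this is set up, the calculation proceeds uniformly, independent of whether $c$ turns left or right at each crossing, since this distinction is entirely absorbed into the entries $q^{(k)}_{ij}$ of the $Q_k$.
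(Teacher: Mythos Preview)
Your argument is correct and essentially the same as the paper's: the paper first multiplies out $S_k=P_kQ_k$ to obtain $(S_k)_{\sigma\tau}=X_{i_k}^{\sigma}q^{(k)}_{\sigma\tau}$ (with the indices read modulo~$2$) and then expands $\Tr(S_1\cdots S_s)$ over $s$ indices, whereas you expand over $2s$ indices and use the diagonality of $P_k$ to collapse half of them; the end computation is identical.
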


\begin{proof}
In the above notation, we have 
\[
S_k=P_kQ_k=
\left(
\begin{array}{cc}
X_{i_k}q_{11}^{(k)} & X_{i_k}q_{10}^{(k)} \\
q_{01}^{(k)}  & q_{00}^{(k)} 
\end{array}
\right).
\]
Write $(S_k)_{ij}$ for the $(i,j)$ entry of this matrix where we consider the indices modulo~2. Then 
\begin{align*}
\Tr(S_1\dots S_s) &= \sum_{\sigma_1,\dots,\sigma_s\in\{0,1\}}(S_1)_{\sigma_1\sigma_2}(S_2)_{\sigma_2\sigma_3}\dots(S_s)_{\sigma_s\sigma_1} \\
&= \sum_{\sigma_1,\dots,\sigma_s\in\{0,1\}} q_{\sigma_1\sigma_2}^{(1)}X_{i_1}^{\sigma_1}\cdot q_{\sigma_2\sigma_3}^{(2)}X_{i_2}^{\sigma_2}\cdot\dots\cdot q_{\sigma_s\sigma_1}^{(s)}X_{i_s}^{\sigma_s}
\end{align*}
as desired.
\end{proof}

From this state-sum formula, we obtain the following.

\begin{lemma}
\label{lem:simplebandGrassmannian}
Suppose $\Sigma$ is an annulus and $\ell$ is the integral lamination consisting of a single closed curve $c$ of weight~1 that winds once around the annulus. Then 
\[
F_\ell(X_1,\dots,X_n)=\sum_{\mathbf{e}=(e_1,\dots,e_n)\in\mathbb{Z}_{\geq0}^n}\chi(\Gr_{\mathbf{e}}^\circ(M_{T,\ell}))X_1^{e_1}\dots X_n^{e_n}.
\]
\end{lemma}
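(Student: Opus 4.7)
The approach is to compare the state-sum expansion of Lemma~\ref{lem:statesum} with the $0/1$-valued Euler characteristic formula of Lemma~\ref{lem:computeEulercharacteristic}, matching coefficients term by term. Since $\ell$ winds once around the annulus and meets each arc of $T$ in the minimal number of points, each arc is crossed at most once by $c$, so the intersection indices $i_1,\dots,i_s$ are pairwise distinct. Consequently, the map $\sigma\mapsto\mathbf{e}(\sigma):=\sum_k\sigma_k\mathbf{e}_{i_k}$ is a bijection from states onto the set of dimension vectors $\mathbf{e}$ with $e_i\leq\dim_\mathbb{C}M_{T,\ell}(i)$. Dimension vectors outside this set contribute zero to both sides: no matching state on the left, empty quiver Grassmannian on the right.

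For $\mathbf{e}=\mathbf{e}(\sigma)$, the coefficient provided by Lemma~\ref{lem:statesum} is the $0/1$-valued product $\prod_k q_{\sigma_k\sigma_{k+1}}^{(k)}$, while the coefficient on the right is the $0/1$-valued $\chi(\Gr_{\mathbf{e}}^\circ(M_{T,\ell}))$ of Lemma~\ref{lem:computeEulercharacteristic}. Inspecting the explicit form of the matrices $Q_k$, the product vanishes exactly when there exists a position $k$ at which either $c$ turns left with $(\sigma_k,\sigma_{k+1})=(0,1)$, or $c$ turns right with $(\sigma_k,\sigma_{k+1})=(1,0)$. Meanwhile, by Lemma~\ref{lem:computeEulercharacteristic}, $\chi(\Gr_{\mathbf{e}}^\circ(M_{T,\ell}))$ vanishes exactly when $Q(T)$ contains an arrow $a\colon i\to j$ between two band vertices with $(e_i,e_j)=(1,0)$.

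The remaining step is to match these two vanishing conditions via the following geometric dictionary: a right turn of $c$ at position $k$ corresponds to an arrow $i_k\to i_{k+1}$ of $Q(T)$, while a left turn corresponds to an arrow $i_{k+1}\to i_k$. This is obtained by comparing the counterclockwise-in-triangle convention orienting the arrows of $Q(T)$ with the turn pictures of Figure~\ref{fig:leftrightturn}. Moreover, every arrow of $Q(T)$ between two band vertices arises as such a turn: a simple loop winding once around an annulus separates $\mathbb{S}$ into two sub-annuli, so any triangle of $T$ whose two arc-edges are both in the band must be traversed by $c$ through precisely those two edges, since otherwise $c$ would be forced to cross one of these arcs a second time. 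Under this dictionary, the forbidden $(\sigma_k,\sigma_{k+1})$-patterns at left and right turns coincide with the forbidden $(e_i,e_j)=(1,0)$ configurations for arrows of $Q(T)$, so the product of $q$-values agrees with the Euler characteristic in every case. The main obstacle is verifying this dictionary carefully, particularly aligning the orientation conventions between $Q(T)$ and the matrices $S_k$ of Section~\ref{sec:loopfunctions}; once that is done the matching of the two indicator functions is routine bookkeeping.
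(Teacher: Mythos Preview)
Your proposal is correct and follows essentially the same approach as the paper's proof: both compare the state-sum formula of Lemma~\ref{lem:statesum} with the $0/1$-valued Euler characteristic from Lemma~\ref{lem:computeEulercharacteristic}, matching the vanishing conditions via the turn/arrow dictionary inside each triangle. You are somewhat more explicit than the paper about the bijection between states and admissible dimension vectors and about why every arrow of $Q(T)$ between band vertices actually arises from a turn of~$c$, but the underlying argument is identical.
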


\begin{proof}
Consider a dimension vector $\mathbf{e}$ satisfying $e_i\leq\dim_\mathbb{C}M_{T,\ell}(i)$ for every vertex $i$ of~$Q(T)$. If the Euler characteristic $\chi(\Gr_{\mathbf{e}}^\circ(M_{T,\ell}))$ vanishes, then by Lemma~\ref{lem:computeEulercharacteristic}, there exists an arrow $a:i\rightarrow j$ in~$Q(T)$ such that 
\begin{enumerate}
\item $\dim_\mathbb{C} M_{T,\ell}(i)=1$ and $\dim_\mathbb{C} M_{T,\ell}(j)=1$.
\item $e_i=1$ and $e_j=0$.
\end{enumerate}
The first property implies that the curve $c$ connects the arcs $i$ and $j$ within some triangle~$t$ of the triangulation~$T$. If the curve $c$ is oriented from~$i=i_k$ to~$j=i_{k+1}$, then it turns to the right as it passes over the triangle $t$, so we have 
\[
Q_k=
\left(
\begin{array}{cc}
1 & 0 \\
1 & 1
\end{array}
\right),
\]
and hence $q_{10}^{(k)}=0$. Similarly, if the curve is oriented from $j=i_k$ to $i=i_{k+1}$, then it turns to the left, so we have 
\[
Q_k=
\left(
\begin{array}{cc}
1 & 1 \\
0 & 1
\end{array}
\right),
\]
hence $q_{01}^{(k)}=0$. In either case, it follows from Lemma~\ref{lem:statesum} that the coefficient of $X_1^{e_1}\dots X_n^{e_n}$ in the Laurent expansion of $F(X_1,\dots,X_n)$ vanishes. Conversely, if the coefficient of $X_1^{e_1}\dots X_n^{e_n}$ vanishes, then we have $q_{\sigma_k\sigma_{k+1}}^{(k)}=0$ for some~$k$. This means either $c$ turns to the right after crossing~$i_k$ and we have $\sigma_k=1$ and $\sigma_{k+1}=0$, or $c$ turns to the left and we have $\sigma_k=0$ and $\sigma_{k+1}=1$. It follows that there is an arrow satisfying properties~1 and~2, and $\chi(\Gr_{\mathbf{e}}^\circ(M_{T,\ell}))=0$. Finally, if $\chi(\Gr_{\mathbf{e}}^\circ(M_{T,\ell}))$ is nonzero, then we must have $\chi(\Gr_{\mathbf{e}}^\circ(M_{T,\ell}))=1$, and similarly any nonzero term in the state-sum expression for $F(X_1,\dots,X_n)$ has coefficient~1. This completes the proof.
\end{proof}

Given an integral lamination $\ell$ and an integer $k\geq1$, let us write $k\ell$ for the lamination obtained from~$\ell$ by multiplying the weight of each curve by~$k$. The following lemma is the reason we work with transverse rather than ordinary quiver Grassmannians.

\begin{lemma}
\label{lem:bandGrassmannian}
Suppose $\Sigma$ is an annulus and $\ell$ is the integral lamination consisting of a single closed curve $c$ of weight~1 that winds once around the annulus. Then for any $k\geq1$, we have 
\[
F_{k\ell}(X_1,\dots,X_n)=\sum_{\mathbf{e}=(e_1,\dots,e_n)\in\mathbb{Z}_{\geq0}^n}\chi(\Gr_{\mathbf{e}}^\circ(M_{T,k\ell}))X_1^{e_1}\dots X_n^{e_n}.
\]
\end{lemma}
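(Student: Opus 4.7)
The plan is to extend both sides of the identity of Lemma~\ref{lem:simplebandGrassmannian} from $\ell$ to $k\ell$. On the left-hand side, since $\mathbb{I}(k\ell)=\Tr((M_1\cdots M_s)^k)$, the same matrix factorization $M_j=P_jQ_j\cdot X_{i_j}^{-1/2}$ used in the proof of Lemma~\ref{lem:statesum} yields the state-sum
\[
F_{k\ell}(X_1,\dots,X_n)=\sum_{\sigma\in\{0,1\}^{ks}}\prod_{j=1}^{ks}q_{\sigma_j\sigma_{j+1}}^{(j)}\prod_{j=1}^{ks}X_{i_{j}}^{\sigma_j},
\]
where indices are taken modulo $s$ for the superscripts and arc labels and modulo $ks$ for the state coordinates. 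Exactly as in Lemma~\ref{lem:simplebandGrassmannian}, each term equals $0$ or $1$, vanishing precisely when the state $\sigma$ has a forbidden transition $\sigma_j\to\sigma_{j+1}$ inconsistent with the turning direction of $c$ at position $j$.

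On the right-hand side, I compute $\chi(\Gr_{\mathbf{e}}^\circ(M_{T,k\ell}))$ by torus localization. The band module $M_{T,k\ell}=M(w,k,\lambda)$ is the pushforward of a standard representation along a morphism $\widetilde{f}\colon\widetilde{Q}\to Q(T)$ from a cyclic cover $\widetilde{Q}$ with $ks$ vertices (with one arrow twisted by $\lambda$), and a generic one-parameter subgroup of the torus $(\mathbb{C}^*)^{ks}$ rescaling the standard basis vectors acts on $\Gr_{\mathbf{e}}(M_{T,k\ell})$ with isolated fixed points. These fixed points are the coordinate subrepresentations, indexed by subsets $S\subseteq\{1,\dots,ks\}$ closed under arrows of $\widetilde{Q}$; writing $\sigma_j=1$ if $j\in S$ and $\sigma_j=0$ otherwise, this closure condition is exactly the absence of forbidden transitions, and the dimension vector at vertex $i$ of $Q(T)$ is $|\{j:\sigma_j=1,\,i_j=i\}|$. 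Combining Bia{\l}ynicki--Birula with the characterization of $\Gr_{\mathbf{e}}^\circ$ in Corollary~1.2 of~\cite{CerulliDupontEsposito} as the set of smooth points in minimal-dimensional irreducible components of $\Gr_{\mathbf{e}}$, one identifies $\chi(\Gr_{\mathbf{e}}^\circ(M_{T,k\ell}))$ with the number of such fixed points lying in $\Gr_{\mathbf{e}}^\circ$, which matches the nonzero terms in the state-sum coefficient by coefficient.

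The main obstacle is verifying that the torus-fixed points contributing to $\chi(\Gr_{\mathbf{e}}^\circ)$ are exactly the coordinate subrepresentations parametrized by states without forbidden transitions. For $k=1$ this was automatic because $\Gr_{\mathbf{e}}$ was already either empty or a single point, but for $k\ge 2$ the ordinary quiver Grassmannian has positive-dimensional irreducible components, and one must invoke the detailed combinatorics of submodules of higher-multiplicity band modules over affine type~$\widetilde{A}$ string algebras, as developed in~\cite{CerulliEsposito,CerulliDupontEsposito,Dupont}, to ensure that every compatible coordinate fixed point lies in $\Gr_{\mathbf{e}}^\circ$ and that no contribution from non-minimal-dimensional components is picked up by the Bia{\l}ynicki--Birula decomposition on the constructible subset $\Gr_{\mathbf{e}}^\circ$.
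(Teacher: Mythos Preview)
Your proposal has a genuine gap at the torus-localization step. The band module $M_{T,k\ell}=M(w,k,\lambda)$ is \emph{not} the pushforward of one-dimensional spaces along a $ks$-vertex cyclic cover with a single $\lambda$-twisted arrow. That pushforward produces, on the underlying $s$-cycle, a cyclic permutation matrix with one entry scaled by~$\lambda$; its characteristic polynomial is $t^k-\lambda$, so it is diagonalizable and the resulting module is the \emph{semisimple} sum $\bigoplus_{\mu^k=\lambda} M(w,1,\mu)$, not the indecomposable $M(w,k,\lambda)$, which by definition carries a single $k\times k$ Jordan block. For the actual band module, the Jordan block is incompatible with a $(\mathbb{C}^*)^{ks}$-grading by one-dimensional weight spaces, so the torus action you describe does not act on $\Gr_{\mathbf e}(M_{T,k\ell})$, and there is no reason for its fixed locus to consist of isolated ``coordinate subrepresentations'' indexed by states. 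Consequently the bridge between your state-sum (which is correct and does compute $F_{k\ell}$) and the Euler characteristics of the transverse Grassmannians is not established.

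The obstacle you flag in your final paragraph is in fact the entire content of the lemma. The paper does not attempt a direct combinatorial match; instead it identifies $CC^\circ(M_{T,\ell})$ with the trace of an $SL_2$ monodromy matrix (using the $k=1$ case already proved) and then invokes the main theorem of~\cite{Dupont}, which asserts that for affine quivers the transverse cluster character satisfies the Chebyshev recursion $CC^\circ(M(w,k,\lambda))=P_k\bigl(CC^\circ(M(w,1,\lambda))\bigr)$. Since $\Tr(\rho^k)=P_k(\Tr\rho)$ as well, the two sides agree for all $k$. The passage to $\Gr_{\mathbf e}^\circ$ for $k\ge 2$ is thus handled entirely by Dupont's theorem, which encodes precisely the ``detailed combinatorics'' you allude to; without it (or an equivalent substitute) the argument does not close.
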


\begin{proof}
To any indecomposable module~$M$ over $J(Q(T),W(T))$, Dupont~\cite{Dupont} associates a Laurent polynomial in the cluster variables by the formula 
\[
CC^\circ(M)=\sum_{\mathbf{e}}\chi(\Gr_{\mathbf{e}}^\circ(M))\prod_{j=1}^nX_j^{e_j} \left(A_j^{-1}\prod_{i=1}^nA_i^{[\varepsilon_{ij}]_+}\right)^{\dim M_j}
\]
where $[r]_{+}=\max(r,0)$. This expression is a variant of the well known \emph{cluster character} in which ordinary quiver Grassmannians have been replaced by transverse quiver Grassmannians. One can check that 
\[
\left(\prod_{j=1}^nA_j^{-1}\prod_{i=1}^nA_i^{[\varepsilon_{ij}]_+}\right)^2=\prod_{j=1}^nX_j^{-1},
\]
and hence if $\ell$ consists of a single closed curve $c$ of weight~1, then 
\[
CC^\circ(M_{T,\ell})=F_\ell(X_1,\dots,X_n)X_1^{-1/2}\dots X_n^{-1/2}
\]
for some choice of square roots. Thus $CC^\circ(M_{T,\ell})$ equals the trace of a matrix in~$SL_2$ representing the monodromy around the curve~$c$. By the main result of~\cite{Dupont}, we have 
\[
CC^\circ(M_{T,k\ell})=P_k(CC^\circ(M_{T,\ell}))
\]
where $P_k$ is the $k$th Chebyshev polynomial. By~\cite{AllegrettiKim}, Proposition~3.6, this last expression equals the trace of the $k$th power of the monodromy matrix. By what we have said, there is a factorization 
\[
CC^\circ(M_{T,k\ell})=\left(\sum_{\mathbf{e}}\chi(\Gr_{\mathbf{e}}^\circ(M_{T,k\ell}))\prod_{j=1}^nX_j^{e_j}\right)X_1^{-k/2}\dots X_n^{-k/2}.
\]
Comparing with the expression for $\mathbb{I}(k\ell)$ given by Lemma~\ref{lem:nonperipheralFpolynomial}, we obtain the desired result.
\end{proof}

Finally, we consider the case where $\Sigma$ is an arbitrary marked bordered surface without punctures and $\ell$ consists of a single closed curve $c$ of arbitrary weight~$k$. Let $t_0$ be any triangle that $c$ intersects, and let $\tilde{t}_0$ be a copy of $t_0$ with an orientation preserving homeomorphism $\tilde{t}_0\rightarrow t_0$. If we choose an orientation for $c$, then we can travel along this curve in the direction specified by the orientation, starting from some point in $t_0$, and we eventually enter a neighboring triangle~$t_1$. Let $\tilde{t}_1$ be a copy of~$t_1$ with an orientation preserving homeomorphism $\tilde{t}_1\rightarrow t_1$, and let us glue $\tilde{t}_0$ and $\tilde{t}_1$ along a boundary edge so that the homeomorphisms agree on the glued edge. If we continue traveling along~$c$, we enter another triangle $t_2$, and we can glue a homeomorphic copy $\tilde{t}_2$ of this triangle to the union $\tilde{t}_0\cup\tilde{t}_1$. Continue this process until we return to the initial triangle for the last time. At this point, we have constructed a surface 
\[
\tilde{t}_0\cup\tilde{t}_1\cup\dots\cup\tilde{t}_N,
\]
and we now glue the triangles $\tilde{t}_N$ and $\tilde{t}_0$ in this expression along an edge so that the homeomorphisms $\tilde{t}_N\rightarrow t_N$ and $\tilde{t}_0\rightarrow t_0$ agree on the glued edge. In this way, we obtain a surface homeomorphic to an annulus.

The annulus defined in this way is naturally a marked bordered surface equipped with an ideal triangulation $\widetilde{T}$ whose triangles are $\tilde{t}_0,\dots,\tilde{t}_N$. The curve $c$ can be lifted to a closed curve on this surface which we regard as an integral lamination $\tilde{\ell}$ by assigning it the weight~$k$. We also get a representation $M_{\widetilde{T},\tilde{\ell}}$ of $Q(\widetilde{T})$ by the construction described previously. Note that there is a map of quivers $f:Q(\widetilde{T})\rightarrow Q(T)$. It sends each arc of $\widetilde{T}$ to the corresponding arc of $T$.

\begin{lemma}
\label{lem:coveringformula}
The Euler characteristic of $\Gr_{\mathbf{e}}^\circ(M_{T,\ell})$ is 
\[
\chi(\Gr_{\mathbf{e}}^\circ(M_{T,\ell}))=\sum_{\mathbf{d}}\chi(\Gr_{\mathbf{d}}^\circ(M_{\widetilde{T},\tilde{\ell}}))
\]
where the sum is over all dimension vectors for $Q(\widetilde{T})$ such that $e_i=\sum_{\tilde{i}\in f^{-1}(i)}d_{\tilde{i}}$.
\end{lemma}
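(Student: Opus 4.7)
The plan is to identify $M_{T,\ell}$ with the pushforward $f_*M_{\widetilde{T},\tilde{\ell}}$ and then to prove the Euler characteristic identity using a flat deformation together with $\mathbb{C}^*$-localization.

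First, I would establish the module identification. By construction, the band $w(\ell)$ in $Q(T)$ records the sequence of arcs crossed by $c$ together with the left/right turn pattern in each triangle, and the lifted curve $\tilde{c}$ crosses the lifted arcs of $\widetilde{T}$ in the corresponding sequence with the same turn pattern. Hence $w(\ell)=f\circ w(\tilde{\ell})$ as morphisms from a common cyclic type $\widetilde{A}$ quiver. Since a band module is by definition the pushforward of a type $\widetilde{A}$ representation, functoriality of the pushforward gives $M_{T,\ell}\cong f_*M_{\widetilde{T},\tilde{\ell}}$ as $\mathbb{C}Q(T)$-modules. The Jacobian relations $\partial_a W(T)=0$ follow because the potential $W(T)$ pulls back under $f$ to a sum of 3-cycles appearing in $W(\widetilde{T})$, so the relations for $W(T)$ applied to $f_*M_{\widetilde{T},\tilde{\ell}}$ reduce to consequences of $\partial_{\tilde{a}}W(\widetilde{T})=0$, which hold on $M_{\widetilde{T},\tilde{\ell}}$ by Theorem~\ref{thm:Jfinitedimensional}.

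Next, I would prove the Euler characteristic identity. Pushforward of submodules gives a natural injection
\[
\Pi\colon \bigsqcup_{\mathbf{d}\colon f_*\mathbf{d}=\mathbf{e}} \Gr^\circ_{\mathbf{d}}(M_{\widetilde{T},\tilde{\ell}}) \hookrightarrow \Gr^\circ_{\mathbf{e}}(M_{T,\ell}),\qquad \widetilde{N}\mapsto f_*\widetilde{N},
\]
whose image is the locus of $f$-graded submodules, i.e., those $N$ with $N(i)=\bigoplus_{\tilde{i}\in f^{-1}(i)}(N(i)\cap\widetilde{M}(\tilde{i}))$ for every vertex $i$. I would realize this image as the fixed locus of a $\mathbb{C}^*$-action and apply localization. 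The obvious torus $(\mathbb{C}^*)^{|\widetilde{Q}_0|}$ scaling the summands of each $M_{T,\ell}(i)=\bigoplus_{\tilde{i}}\widetilde{M}(\tilde{i})$ does not preserve the $Q(T)$-module structure, since an arrow $a$ of $Q(T)$ with several preimages $\tilde{a}_1,\dots,\tilde{a}_r$ is sent to a map $M_{T,\ell}(a)=\bigoplus_k\widetilde{M}(\tilde{a}_k)$ linking different summands. To circumvent this, I would construct a flat family $\{M_s\}_{s\in\mathbb{A}^1}$ of $J(Q(T),W(T))$-modules with $M_0\cong M_{T,\ell}$ and $M_s(a)=\sum_k s^{n_k}\widetilde{M}(\tilde{a}_k)$ for generically chosen integer weights $n_k$; for $s\neq 0$ the module $M_s$ admits the desired $\mathbb{C}^*$-action with $f$-graded fixed locus. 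Constancy of Euler characteristics in flat families of constructible sets, combined with the constructibility of the transverse locus $\Gr^\circ$ proved earlier in the paper, then yields
\[
\chi(\Gr^\circ_{\mathbf{e}}(M_{T,\ell}))=\chi(\Gr^\circ_{\mathbf{e}}(M_s))=\sum_{\mathbf{d}\colon f_*\mathbf{d}=\mathbf{e}} \chi(\Gr^\circ_{\mathbf{d}}(M_{\widetilde{T},\tilde{\ell}})),
\]
where the last step applies $\mathbb{C}^*$-localization to the generic fiber.

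The principal obstacle is selecting the integer weights $n_k$ so that the potential relations $\partial_a W(T)=0$ are satisfied for every $s$. This amounts to finding a $\mathbb{Z}$-grading on the arrows of $Q(\widetilde{T})$ whose pushforward to $Q(T)$ is trivial and which is constant on each 3-cycle of $W(\widetilde{T})$; such a grading exists because $f$ sends 3-cycles of $\widetilde{T}$ to 3-cycles of $T$ in a combinatorially controlled way coming from the unrolling construction. A secondary technical point is verifying that the transverse locus $\Gr^\circ$ (smooth points of minimal-dimensional irreducible components) behaves well along the deformation, which follows from the semicontinuity arguments used to establish its constructibility.
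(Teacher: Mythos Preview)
Your overall strategy—identify $M_{T,\ell}\cong f_*M_{\widetilde T,\tilde\ell}$ and then apply torus localization—matches the paper's, but the deformation step does not do what you need. For $s\in\mathbb{C}^*$ the module $M_s$ with structure maps $\sum_k s^{n_k}\widetilde M(\tilde a_k)$ is isomorphic to $M_{T,\ell}$ via the diagonal automorphism of the underlying graded vector space, so your family over $\mathbb{G}_m$ is constant up to isomorphism and no individual fiber $M_s$ acquires a $\mathbb{C}^*$-action that $M_{T,\ell}$ lacked. (The claim $M_0\cong M_{T,\ell}$ is also inconsistent with $M_s(a)=\sum_k s^{n_k}\widetilde M(\tilde a_k)$ unless every $n_k$ vanishes.) The paper instead invokes Haupt, who constructs the torus action directly on the variety $\Gr_{\mathbf e}(M_{T,\ell})$: although conjugating the structure maps by the diagonal torus does not fix them, it carries submodules of $M$ to submodules of an isomorphic module, and this is packaged into an algebraic $\mathbb{T}$-action on the Grassmannian whose fixed points are exactly the $f$-graded submodules.

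The more serious gap is that you never verify the transverse condition is compatible with $f_*$. For localization on $\Gr^\circ_{\mathbf e}(M_{T,\ell})$ to yield $\sum_{\mathbf d}\chi(\Gr^\circ_{\mathbf d}(M_{\widetilde T,\tilde\ell}))$ rather than $\sum_{\mathbf d}\chi(\Gr_{\mathbf d}(M_{\widetilde T,\tilde\ell}))$, you need the biconditional: $\widetilde N\in\Gr^\circ_{\mathbf d}(\widetilde M)$ if and only if $f_*\widetilde N\in\Gr^\circ_{\mathbf e}(M)$. You assert the forward direction when you write $\Pi$ as landing in $\Gr^\circ_{\mathbf e}$, and you implicitly need the reverse to identify the fixed locus inside $\Gr^\circ_{\mathbf e}(M)$, but neither is argued—and it is not automatic, since the two $\Ext^1$ groups are computed over the different path algebras $\mathbb{C}Q(T)$ and $\mathbb{C}Q(\widetilde T)$. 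The paper proves this directly: given an extension $0\to N\to V\to M/N\to 0$ of $\mathbb{C}Q(T)$-modules with $N=f_*\widetilde N$, one decomposes each $V(i)$ along the preimages of the summands $\widetilde M(\tilde i)/\widetilde N(\tilde i)$ to lift it to an extension over $\mathbb{C}Q(\widetilde T)$, so splitting upstairs forces splitting downstairs; the other direction is immediate because $f_*$ is exact. Your ``secondary technical point'' about semicontinuity along the deformation addresses a different (and, since the family is isotrivial, vacuous) concern.
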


\begin{proof}
For convenience, let us write $M=M_{T,\ell}$ and $\widetilde{M}=M_{\widetilde{T},\tilde{\ell}}$. Then we have $M=f_*(\widetilde{M})$ by construction. We claim that if $\widetilde{N}\in\Gr_{\mathbf{d}}(\widetilde{M})$ is a submodule and $N=f_*(\widetilde{N})\in\Gr_{\mathbf{e}}(M)$, then $\widetilde{N}\in\Gr_{\mathbf{d}}^\circ(\widetilde{M})$ if and only if $N\in\Gr_{\mathbf{e}}^\circ(M)$. To see this, suppose we are given that $N\in\Gr_{\mathbf{e}}^\circ(M)$. If 
\begin{align}
\label{eqn:extensionupstairs}
\xymatrix{ 
0 \ar[r] & \widetilde{N} \ar[r] & \widetilde{V} \ar[r] & \widetilde{M}/\widetilde{N} \ar[r] & 0
}
\end{align}
is an extension of $\mathbb{C}Q(\widetilde{T})$-modules, then by applying $f_*$, we get a corresponding extension 
\begin{align}
\label{eqn:extensiondownstairs}
 \xymatrix{ 
0 \ar[r] & N \ar[r]^-{\iota} & V \ar[r]^-{\pi} & M/N \ar[r] & 0
}
\end{align}
of $\mathbb{C}Q(T)$-modules which splits by our assumption on $N$. It follows that~\eqref{eqn:extensionupstairs} splits and we have $\widetilde{N}\in\Gr_{\mathbf{d}}^\circ(\widetilde{M})$.

Conversely, suppose we are given that $\widetilde{N}\in\Gr_{\mathbf{d}}^\circ(\widetilde{M})$. Suppose we have an extension of the form~\eqref{eqn:extensiondownstairs}. For any vertex $i$ of the quiver $Q(T)$, we have decompositions $M(i)=\bigoplus_{\tilde{i}\in f^{-1}(i)}\widetilde{M}(\tilde{i})$, \ $N(i)=\bigoplus_{\tilde{i}\in f^{-1}(i)}\widetilde{N}(\tilde{i})$, and $(M/N)(i)=\bigoplus_{\tilde{i}\in f^{-1}(i)}\widetilde{M}(\tilde{i})/\widetilde{N}(\tilde{i})$. For each vertex $\tilde{i}$ of $Q(\widetilde{T})$, let us write $\widetilde{V}(\tilde{i})$ for the preimage of $\widetilde{M}(\tilde{i})/\widetilde{N}(\tilde{i})$ under the map~$\pi$. One can check that any two of the vector spaces $\widetilde{V}(\tilde{i})$ associated to the vertices $\tilde{i}$ in this way intersect trivially and $V=\sum_{\tilde{i}\in f^{-1}(i)}\widetilde{V}(\tilde{i})$. Hence for each $i$ we have a decomposition 
\[
V(i)\cong\bigoplus_{\tilde{i}\in f^{-1}(i)}\widetilde{V}(\tilde{i}).
\]
Moreover, for any arrow $a$ in $Q(T)$, the map $V(a)$, respects the decompositions so that we have a representation $\widetilde{V}$ of $Q(\widetilde{T})$ and $V=f_*(\widetilde{V})$. We then have a split exact sequence of the form~\eqref{eqn:extensionupstairs}, and hence \eqref{eqn:extensiondownstairs} is split. It follows that $N\in\Gr_{\mathbf{e}}^\circ(M)$. This proves our claim.

Now let us consider the map 
\[
f_*:\coprod_{\mathbf{d}}\Gr_{\mathbf{d}}(\widetilde{M})\rightarrow\Gr_{\mathbf{e}}(M)
\]
where the union is over all dimension vectors for $Q(\widetilde{T})$ such that $e_i=\sum_{\tilde{i}\in f^{-1}(i)}d_{\tilde{i}}$. In~\cite{Haupt}, Haupt showed that there exists an action of an algebraic torus $\mathbb{T}$ on~$\Gr_{\mathbf{e}}(M)$ such that the map $f_*$ identifies this union with the set of $\mathbb{T}$-fixed points. Thus, by the above claim, there is a decomposition of $\Gr_{\mathbf{e}}^\circ(M)^{\mathbb{T}}$ as a union of constructible subsets 
\[
\Gr_{\mathbf{e}}^\circ(M)^{\mathbb{T}}=\coprod_{\mathbf{d}}\Gr_{\mathbf{d}}^\circ(\widetilde{M}).
\]
Since $\Gr_{\mathbf{e}}^\circ(M)$ is a constructible subset of $\Gr_{\mathbf{e}}(M)$, Proposition~5.1 of~\cite{Haupt} implies the equality of Euler characteristics $\chi(\Gr_{\mathbf{e}}^\circ(M))=\chi(\Gr_{\mathbf{e}}^\circ(M)^{\mathbb{T}})$. The lemma now follows by the excision property of the Euler characteristic.
\end{proof}

We can now prove the desired generalization of Lemma~\ref{lem:bandGrassmannian}.

\begin{proposition}
\label{prop:Fpolynomialbandmodule}
Let $\ell$ be an integral lamination consisting of a single closed curve $c$ of weight~$k$. Then 
\[
F_\ell(X_1,\dots,X_n)=\sum_{\mathbf{e}=(e_1,\dots,e_n)\in\mathbb{Z}_{\geq0}^n}\chi(\Gr_{\mathbf{e}}^\circ(M_{T,\ell}))X_1^{e_1}\dots X_n^{e_n}.
\]
\end{proposition}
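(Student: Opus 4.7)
The plan is to reduce the statement to the annulus case already handled by Lemma~\ref{lem:bandGrassmannian}, using the unfolding construction introduced just before Lemma~\ref{lem:coveringformula}. First I would form the annulus $\widetilde{\Sigma}$ with ideal triangulation $\widetilde{T}$ obtained by taking copies of the triangles of $T$ that $c$ traverses and gluing them in order, and let $\tilde{\ell}$ denote the lift of $\ell$ to $\widetilde{\Sigma}$, so that $\tilde{\ell}$ is a single closed curve of weight $k$ winding exactly once around $\widetilde{\Sigma}$. Writing $Y_{\tilde{i}}$ for the cluster $\mathcal{X}$-coordinate attached to an arc $\tilde{i}$ of $\widetilde{T}$, Lemma~\ref{lem:bandGrassmannian} then gives
\[
F_{\tilde{\ell}}(Y_1,\dots,Y_m)=\sum_{\mathbf{d}}\chi\bigl(\Gr_{\mathbf{d}}^\circ(M_{\widetilde{T},\tilde{\ell}})\bigr)\prod_{\tilde{i}}Y_{\tilde{i}}^{d_{\tilde{i}}}.
\]

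Next I would identify $F_\ell$ with $F_{\tilde{\ell}}$ up to a variable substitution. Since the polynomial factor $F_{k\ell}$ is the trace of a product of the matrices $S_j$ depending only on the ordered sequence of arcs crossed by the loop and the left/right turning direction at each crossing, and since the orientation-preserving gluings producing $\widetilde{\Sigma}$ by construction match these local data with those of $\tilde{\ell}$ under the quiver morphism $f\colon Q(\widetilde{T})\to Q(T)$, the two polynomials agree after renaming variables:
\[
F_\ell(X_1,\dots,X_n)=F_{\tilde{\ell}}(Y_1,\dots,Y_m)\big|_{Y_{\tilde{i}}=X_{f(\tilde{i})}}.
\]
Applying this substitution to the formula above and grouping monomials according to $e_i=\sum_{\tilde{i}\in f^{-1}(i)}d_{\tilde{i}}$ expresses $F_\ell(X)$ as a sum over $\mathbf{e}$ whose coefficient is $\sum_{\mathbf{d}\mapsto\mathbf{e}}\chi(\Gr_{\mathbf{d}}^\circ(M_{\widetilde{T},\tilde{\ell}}))$, and this inner sum is exactly $\chi(\Gr_{\mathbf{e}}^\circ(M_{T,\ell}))$ by Lemma~\ref{lem:coveringformula}. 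This will complete the proof.

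The main obstacle is confirming the variable-substitution identity cleanly. One must trace carefully through the unfolding construction to check that each lifted triangle $\tilde{t}_j$ sees the same arcs on its sides (up to the map $f$) as $t_j$ does, and in particular that the local left/right pattern governing which of the two forms the matrix $S_j$ takes is preserved when one passes from $\ell$ to $\tilde{\ell}$. This is unavoidably a bookkeeping step, but once it is settled the reduction to Lemma~\ref{lem:bandGrassmannian} via Lemma~\ref{lem:coveringformula} is entirely formal and matches coefficients on both sides of the claimed identity.
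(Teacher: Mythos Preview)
Your proposal is correct and follows essentially the same route as the paper: unfold to the annulus, invoke Lemma~\ref{lem:bandGrassmannian} for $F_{\tilde{\ell}}$, substitute $X_{f(\tilde{i})}$ for the annulus variables, and then match coefficients via Lemma~\ref{lem:coveringformula}. The paper treats the variable-substitution step in one sentence, whereas you flag it as the main bookkeeping obstacle, but the underlying argument is identical.
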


\begin{proof}
We have seen how to construct an auxiliary marked bordered surface, which is topologically an annulus, equipped with an ideal triangulation~$\widetilde{T}$. The lamination $\ell$ can be lifted to a lamination $\tilde{\ell}$ on this annulus consisting of a single closed curve $\tilde{c}$ of weight~$k$, and we can assume that $\tilde{c}$ passes through each triangle of $\widetilde{T}$ exactly once. Let us assign a variable $X_{\tilde{i}}$ to each arc~$\tilde{i}$ of~$\widetilde{T}$. Then, by taking a trace as described above, we can associate a polynomial $F_{\tilde{\ell}}$ in these variables to~$\tilde{\ell}$. By Lemma~\ref{lem:bandGrassmannian}, this polynomial is given by 
\[
F_{\tilde{\ell}}=\sum_{\mathbf{d}}\chi(\Gr_{\mathbf{d}}^\circ(M_{\widetilde{T},\tilde{\ell}}))\prod_{\tilde{i}}X_{\tilde{i}}^{d_{\tilde{i}}}
\]
where the sum runs over all dimension vectors for the quiver $Q(\widetilde{T})$. To recover the polynomial $F_\ell(X_1,\dots,X_n)$, we simply replace each variable $X_{\tilde{i}}$ in the above expression for $F_{\tilde{\ell}}$ by $X_i$ where $f(\tilde{i})=i$. Thus the result follows from Lemma~\ref{lem:coveringformula}.
\end{proof}

\subsection{String modules}

Let $\Sigma$ be a marked bordered surface without punctures and $T$ an ideal triangulation of~$\Sigma$. Suppose $\gamma$ is an arc on $\Sigma$ such that the interior of $\gamma$ intersects the edges of $T$ transversely and the number of intersections of $\gamma$ with a given edge equals the geometric intersection number. Choose an orientation for this arc~$\gamma$. By considering the arcs of the triangulation that $\gamma$ intersects in order as we travel along this arc according to the orientation, we obtain in an obvious way, a string for the quiver~$Q(T)$.

In particular, by the construction described above, we define a string module over the Jacobian algebra.

\begin{definition}
We write $M_{T,\gamma}$ for the string module associated to the arc~$\gamma$ by the above construction.
\end{definition}

Building on earlier work of Derksen, Weyman, and Zelevinsky~\cite{DWZ2} on representations of Jacobian algebras, Labardini-Fragoso proved the following.

\begin{theorem}[\cite{L2}, Corollary~6.7]
\label{thm:Fpolynomialstringmodule}
The $F$-polynomial $F_\gamma$ associated to the arc $\gamma$ and the initial triangulation $T$ is given by the formula 
\[
F_\gamma(X_1,\dots,X_n)=\sum_{\mathbf{e}=(e_1,\dots,e_n)\in\mathbb{Z}_{\geq0}^n}\chi(\Gr_{\mathbf{e}}M_{T,\gamma})X_1^{e_1}\dots X_n^{e_n}.
\]
\end{theorem}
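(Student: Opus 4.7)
The plan is to deduce this from the general theory of cluster characters developed by Derksen, Weyman, and Zelevinsky~\cite{DWZ2}. They show that for a Jacobi-finite quiver with potential $(Q,W)$, every cluster variable in the associated cluster algebra arises as the cluster character of a canonically associated decorated representation of $J(Q,W)$, and that the $F$-polynomial of such a cluster variable coincides with the generating function $\sum_{\mathbf{e}}\chi(\Gr_{\mathbf{e}}(M))\prod_jX_j^{e_j}$ of its underlying module $M$. By Theorem~\ref{thm:Jfinitedimensional} our Jacobian algebra is finite dimensional, so this machinery is available. Thus the statement reduces to identifying, representation-theoretically, the module that the DWZ procedure attaches to the cluster variable $A_\gamma$ with the string module $M_{T,\gamma}$ defined above.

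I would prove this identification by induction on the minimum number $d(T,\gamma)$ of flips needed to transform $T$ into an ideal triangulation having $\gamma$ as an arc; this is well-defined and finite by Proposition~\ref{prop:flipelementary}. In the base case $d(T,\gamma)=0$, the arc $\gamma$ already lies in $T$, so $A_\gamma$ is an initial cluster variable and $F_\gamma=1$. The associated string is trivial, so $M_{T,\gamma}=0$ and the right-hand side is also $1$, matching.

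For the inductive step, choose a flip at an edge $k$ producing an ideal triangulation $T^*$ with $d(T^*,\gamma)<d(T,\gamma)$. By Proposition~\ref{prop:flipmutation} the quivers with potential $(Q(T),W(T))$ and $(Q(T^*),W(T^*))$ differ by DWZ mutation at $k$, and the DWZ mutation rule for decorated representations dictates how the module associated to $A_\gamma$ transforms. Combining this with the inductive hypothesis reduces the theorem to the purely representation-theoretic claim that the DWZ mutation $\mu_k(M_{T^*,\gamma})$ is isomorphic, as a module over $J(Q(T),W(T))$, to the string module $M_{T,\gamma}$.

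The main obstacle is precisely this final claim. One must analyze what happens to the string $w(T^*,\gamma)$ encoding the crossings of $\gamma$ with $T^*$ when a single arc $k$ of $T^*$ is flipped: locally in the quadrilateral bounded by the two triangles adjacent to $k$, the crossing pattern of $\gamma$ is rearranged in one of finitely many configurations depending on which sides of the quadrilateral $\gamma$ enters and exits. In each configuration one has to compute the DWZ mutation of the string representation explicitly and check that the resulting representation is precisely the string representation of $w(T,\gamma)$, while simultaneously verifying that it is annihilated by the Jacobian ideal of the mutated potential. The latter verification requires Labardini-Fragoso's explicit formula for $W(T)$ as a sum of 3-cycles indexed by the interior triangles of $T$ together with careful local bookkeeping at the flipped quadrilateral; this case analysis and relation check comprise the technical heart of~\cite{L2}, and are where essentially all the work lies.
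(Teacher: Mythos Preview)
The paper does not give its own proof of this statement: it is quoted verbatim as Corollary~6.7 of~\cite{L2}, prefaced only by the remark that Labardini-Fragoso proved it ``building on earlier work of Derksen, Weyman, and Zelevinsky~\cite{DWZ2}.'' Your outline is an accurate summary of exactly that strategy---reduce to the DWZ formula for $F$-polynomials via Euler characteristics of quiver Grassmannians, then identify the DWZ module attached to~$A_\gamma$ with the string module $M_{T,\gamma}$ by induction on flip distance, the inductive step being the compatibility of DWZ mutation with the combinatorial change of string under a flip---so your proposal matches the approach the paper relies on.
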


\subsection{Categorification of canonical bases}

In this subsection, we will use the above results to construct graded vector spaces which categorify the canonical basis. This will yield a proof of Theorem~\ref{thm:introcategorification1}.

Consider a lamination $\ell\in\mathcal{A}(\mathbb{Z}^t)$ on the marked bordered surface $\Sigma$. Such a lamination can be represented by a collection of curves on~$\mathbb{S}$ such that there is at most one closed curve in any homotopy class and any open curve has weight $\pm1$. If a curve ends on a boundary segment, let us modify this curve by dragging its endpoints along the boundary in the counterclockwise direction until they hit the marked points. In this way we obtain a collection of arcs, boundary segments, and closed loops, and we can assume that these intersect the arcs of the triangulation transversely in the minimal number of points.

Let $\{\gamma_1,\dots,\gamma_r\}$ be the set of arcs obtained in this way, and let $\{\ell_1,\dots,\ell_s\}$ be the set of laminations that we get from the individual closed curves of~$\ell$. Let 
\[
M_{T,\ell}=\bigoplus_{i=1}^rM_{T,\gamma_i}\oplus\bigoplus_{j=1}^sM_{T,\ell_j}
\]
be the direct sum of the string and band modules defined above. For each loop~$\ell_j$, let us write $\pi_j:M_{T,\ell}\rightarrow M_{T,\ell_j}$ for the projection onto~$M_{T,\ell_j}$. We will consider the following space parametrizing submodules of $M_{T,\ell}$.

\begin{definition}
We write $\Gr_{\mathbf{d}}^\circ(M_{T,\ell})$ for the set of points $N\in\Gr_{\mathbf{d}}(M_{T,\ell})$ such that, for any loop $\ell_j$ of the lamination~$\ell$, we have $\pi_j(N)\in\Gr_{\mathbf{e}}^\circ(M_{T,\ell_j})$ for some dimension vector~$\mathbf{e}$.
\end{definition}

\begin{lemma}
The set $\Gr_{\mathbf{d}}^\circ(M_{T,\ell})$ is a constructible subset of $\Gr_{\mathbf{d}}(M_{T,\ell})$.
\end{lemma}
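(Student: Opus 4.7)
The plan is to stratify $\Gr_{\mathbf{d}}(M_{T,\ell})$ according to the dimension vectors of the projections $\pi_j(N)$ and then reduce, stratum by stratum, to the constructibility of $\Gr_{\mathbf{e}}^\circ(M_{T,\ell_j})$ for the individual band modules, which was established by the lemma in the previous subsection. First, for any $s$-tuple $\mathbf{E}=(\mathbf{e}^{(1)},\dots,\mathbf{e}^{(s)})$ of dimension vectors, let
\[
S_{\mathbf{E}} = \{N\in\Gr_{\mathbf{d}}(M_{T,\ell}) : \underline{\dim}\,\pi_j(N)=\mathbf{e}^{(j)} \text{ for all } j=1,\dots,s\}.
\]
Because the rank of a family of linear maps is a lower semicontinuous function of its parameter, the condition $\dim\pi_j(N)(i)\geq k$ is open on $\Gr_{\mathbf{d}}(M_{T,\ell})$, and therefore $\dim\pi_j(N)(i)=k$ is locally closed. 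It follows that each $S_{\mathbf{E}}$ is a locally closed subset of $\Gr_{\mathbf{d}}(M_{T,\ell})$.

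On a stratum $S_{\mathbf{E}}$, the rank of $\pi_j|_N$ is constant in each coordinate, and a standard local trivialization argument shows that the assignment $N\mapsto\pi_j(N)$ then defines a morphism of varieties
\[
\Phi_j^{\mathbf{E}}: S_{\mathbf{E}}\to\Gr_{\mathbf{e}^{(j)}}(M_{T,\ell_j}).
\]
Since $\Gr_{\mathbf{e}^{(j)}}^\circ(M_{T,\ell_j})$ is constructible in $\Gr_{\mathbf{e}^{(j)}}(M_{T,\ell_j})$ by the preceding lemma, its preimage under $\Phi_j^{\mathbf{E}}$ is constructible, and hence
\[
\Gr_{\mathbf{d}}^\circ(M_{T,\ell})\cap S_{\mathbf{E}} = \bigcap_{j=1}^{s}\bigl(\Phi_j^{\mathbf{E}}\bigr)^{-1}\!\bigl(\Gr_{\mathbf{e}^{(j)}}^\circ(M_{T,\ell_j})\bigr)
\]
is a constructible subset of $S_{\mathbf{E}}$, and therefore of $\Gr_{\mathbf{d}}(M_{T,\ell})$ itself.

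Finally, each $\mathbf{e}^{(j)}$ is bounded componentwise by $\underline{\dim}\,M_{T,\ell_j}$, so only finitely many $S_{\mathbf{E}}$ are nonempty. Since $\Gr_{\mathbf{d}}^\circ(M_{T,\ell})$ is the finite union of its intersections with these strata, it is constructible. The only nontrivial point in the argument is the verification that $N\mapsto\pi_j(N)$ is a morphism on a stratum of constant rank; this is a standard fact about families of subspaces and constant-rank linear maps, which can be checked in local coordinates on the Grassmannian but needs to be stated carefully because the map is not defined globally on $\Gr_{\mathbf{d}}(M_{T,\ell})$.
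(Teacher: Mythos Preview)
Your argument is correct and is essentially the same as the paper's. The paper packages the stratification-by-rank step into the assertion that $\Phi_j(N)=(N\cap R,\pi_j(N))$ is a \emph{constructible map} $\Gr_{\mathbf{d}}(M_{T,\ell})\to\coprod_{\mathbf{e}_1+\mathbf{e}_2=\mathbf{d}}\Gr_{\mathbf{e}_1}(R)\times\Gr_{\mathbf{e}_2}(M_{T,\ell_j})$ and then takes the preimage of $\coprod\Gr_{\mathbf{e}_1}(R)\times\Gr_{\mathbf{e}_2}^\circ(M_{T,\ell_j})$ for each $j$ separately before intersecting, whereas you unpack the constructibility of this map explicitly via the rank strata $S_{\mathbf{E}}$ and drop the redundant first component $N\cap R$; the content is the same.
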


\begin{proof}
For any index $j$, there is a decomposition $M_{T,\ell}=R\oplus M_{T,\ell_j}$ where $R$ is the direct sum of the string and band modules not isomorphic to $M_{T,\ell_j}$. By setting 
\[
\Phi_j(N)=(N\cap R,\pi_j(N)),
\]
we obtain a constructible map 
\[
\Phi_j:\Gr_{\mathbf{d}}(R\oplus M_{T,\ell_j})\rightarrow\coprod_{\mathbf{e}_1+\mathbf{e}_2=\mathbf{d}}\Gr_{\mathbf{e}_1}(R)\times\Gr_{\mathbf{e}_2}(M_{T,\ell_j}).
\]
For any dimension vectors $\mathbf{e}_1$ and~$\mathbf{e}_2$ such that $\mathbf{e}_1+\mathbf{e}_2=\mathbf{d}$, the product $\Gr_{\mathbf{e}_1}(R)\times\Gr_{\mathbf{e}_2}^\circ(M_{T,\ell_j})$ is a constructible subset of the right hand side, and hence the preimage 
\[
C_j\coloneqq\Phi_j^{-1}\left(\coprod_{\mathbf{e}_1+\mathbf{e}_2=\mathbf{d}}\Gr_{\mathbf{e}_1}(R)\times\Gr_{\mathbf{e}_2}^\circ(M_{T,\ell_j})\right)
\]
is constructible. Hence 
\[
\Gr_{\mathbf{d}}^\circ(M_{T,\ell})=\bigcap_{j=1}^sC_j
\]
is constructible as required.
\end{proof}

We will attach to the lamination~$\ell$ the $\mathbb{Z}^n\times\mathbb{Z}$-graded vector space 
\[
\mathcal{H}^{T,\ell}\coloneqq\bigoplus_{\mathbf{d},i}\mathcal{H}_{\mathbf{d},i}^{T,\ell}
\]
where 
\[
\mathcal{H}_{\mathbf{d},i}^{T,\ell}\coloneqq H^i(\Gr_{\mathbf{d}-\mathbf{h}}^\circ(M_{T,\ell}),\mathbb{C})
\]
and $\mathbf{h}=(h_{\ell,1},\dots,h_{\ell,n})$ is the vector from Proposition~\ref{prop:generalFpolynomial}. To calculate the graded dimension of this vector space, we employ the following result on Euler characteristics.

\begin{lemma}
\label{lem:eulercharacteristicsum}
Let $\mathbf{d}$ be a dimension vector for representations of~$Q(T)$. Then 
\[
\chi(\Gr_{\mathbf{d}}^\circ(M_{T,\ell}))=\sum_{\stackrel{\mathbf{e}_1,\dots,\mathbf{e}_r}{\mathbf{f}_1,\dots,\mathbf{f}_s}}\left(\prod_{i=1}^r\chi(\Gr_{\mathbf{e}_i}(M_{T,\gamma_i}))\cdot\prod_{j=1}^s\chi(\Gr_{\mathbf{f}_j}^\circ(M_{T,\ell_j}))\right)
\]
where the sum is over all dimension vectors $\mathbf{e}_i$ and $\mathbf{f}_j$ such that $\mathbf{d}=\sum_i\mathbf{e}_i+\sum_j\mathbf{f}_j$.
\end{lemma}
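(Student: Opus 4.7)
The strategy is to use a torus action argument analogous to the one invoked in the proof of Lemma~\ref{lem:coveringformula}. Consider the algebraic torus $\mathbb{T}=(\mathbb{G}_m)^{r+s}$ acting on $M_{T,\ell}$ by scaling each of the summands $M_{T,\gamma_1},\dots,M_{T,\gamma_r},M_{T,\ell_1},\dots,M_{T,\ell_s}$ independently. Since this action commutes with the action of the path algebra, it induces a $\mathbb{T}$-action on the quiver Grassmannian $\Gr_{\mathbf{d}}(M_{T,\ell})$. A standard weight decomposition shows that a submodule $N\subseteq M_{T,\ell}$ is $\mathbb{T}$-fixed if and only if
\[
N = \bigoplus_{i=1}^r N_i' \oplus \bigoplus_{j=1}^s N_j''
\]
with $N_i'\subseteq M_{T,\gamma_i}$ and $N_j''\subseteq M_{T,\ell_j}$ submodules. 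In particular, the fixed locus decomposes as
\[
\Gr_{\mathbf{d}}(M_{T,\ell})^{\mathbb{T}}=\coprod_{\sum\mathbf{e}_i+\sum\mathbf{f}_j=\mathbf{d}}\prod_i\Gr_{\mathbf{e}_i}(M_{T,\gamma_i})\times\prod_j\Gr_{\mathbf{f}_j}(M_{T,\ell_j}).
\]

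Next, the defining condition for $\Gr_{\mathbf{d}}^\circ(M_{T,\ell})$ is visibly $\mathbb{T}$-invariant: if $N$ is $\mathbb{T}$-invariant, then $\pi_j(N)=N_j''$, and the requirement that $\pi_j(N)$ belong to $\Gr_{\mathbf{e}}^\circ(M_{T,\ell_j})$ for some $\mathbf{e}$ reduces at a fixed point to $N_j''\in\Gr^\circ_{\mathbf{f}_j}(M_{T,\ell_j})$. There is no constraint on the string module factors $N_i'$. Hence the fixed locus of $\Gr_{\mathbf{d}}^\circ(M_{T,\ell})$ under $\mathbb{T}$ is precisely
\[
\Gr_{\mathbf{d}}^\circ(M_{T,\ell})^{\mathbb{T}}=\coprod_{\sum\mathbf{e}_i+\sum\mathbf{f}_j=\mathbf{d}}\prod_i\Gr_{\mathbf{e}_i}(M_{T,\gamma_i})\times\prod_j\Gr_{\mathbf{f}_j}^\circ(M_{T,\ell_j}).
\]

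Finally, since $\Gr_{\mathbf{d}}^\circ(M_{T,\ell})$ is a constructible subset of the complex projective variety $\Gr_{\mathbf{d}}(M_{T,\ell})$ carrying a $\mathbb{T}$-action, Proposition~5.1 of~\cite{Haupt} (already used in the proof of Lemma~\ref{lem:coveringformula}) gives $\chi(\Gr_{\mathbf{d}}^\circ(M_{T,\ell}))=\chi(\Gr_{\mathbf{d}}^\circ(M_{T,\ell})^{\mathbb{T}})$. Combining this with the above decomposition of the fixed locus and the multiplicativity of Euler characteristics on products of constructible sets yields the desired formula.

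The only subtle point is verifying that the $\mathbb{T}$-fixed submodules really do split as direct sums of submodules of the summands. This follows because the $\mathbb{T}$-action on each summand $M_{T,\gamma_i}$ or $M_{T,\ell_j}$ is given by a distinct character of $\mathbb{T}$, so a $\mathbb{T}$-invariant subspace of $M_{T,\ell}$ decomposes into its weight spaces, which are precisely its intersections with the summands; the submodule condition then descends summand by summand. Once this structural statement is in place, the rest is a formal consequence of the torus-localization formula for the Euler characteristic.
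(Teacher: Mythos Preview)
Your proposal is correct and follows essentially the same approach as the paper: define a torus action on $M_{T,\ell}$ by scaling the direct summands, identify the $\mathbb{T}$-fixed locus of $\Gr_{\mathbf{d}}^\circ(M_{T,\ell})$ as the disjoint union of products $\prod_i\Gr_{\mathbf{e}_i}(M_{T,\gamma_i})\times\prod_j\Gr_{\mathbf{f}_j}^\circ(M_{T,\ell_j})$, and conclude via Proposition~5.1 of~\cite{Haupt}. The only cosmetic difference is that the paper uses a torus of rank $r+s-1$ (fixing the last summand) rather than your rank $r+s$, which is immaterial since the diagonal acts trivially on the Grassmannian.
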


\begin{proof}
Note that if $M=M_1\oplus\dots\oplus M_p$ is a direct sum of modules over the Jacobian algebra, then we can define an action of the algebraic torus $\mathbb{T}=(\mathbb{C}^*)^{p-1}$ on $M$ by 
\[
(t_1,\dots,t_{p-1})\cdot(m_1,\dots,m_p)=(t_1m_1,\dots,t_{p-1}m_{p-1},m_p),
\]
and this induces an algebraic action of $\mathbb{T}$ on the variety $\Gr_{\mathbf{d}}(M)$. Moreover, the fixed points of $\Gr_{\mathbf{d}}(M)$ under this action are precisely the submodules $N\subseteq M$ possessing a direct sum decomposition $N=N_1\oplus\dots\oplus N_p$ where $N_i\subseteq M_i$ is a submodule for each~$i$. In particular, we can apply this to the module $M=M_{T,\ell}$ constructed above, which is a direct sum of string and band modules. In this case, the set of $\mathbb{T}$-fixed points in $\Gr_{\mathbf{d}}^\circ(M_{T,\ell})$ is 
\[
\Gr_{\mathbf{d}}^\circ(M_{T,\ell})^{\mathbb{T}}=\coprod_{\stackrel{\mathbf{e}_1,\dots,\mathbf{e}_r}{\mathbf{f}_1,\dots,\mathbf{f}_s}}\left(\prod_{i=1}^r\Gr_{\mathbf{e}_i}(M_{T,\gamma_i})\times\prod_{j=1}^s\Gr_{\mathbf{f}_j}^\circ(M_{T,\ell_j})\right)
\]
where the sum is over all dimension vectors $\mathbf{e}_i$ and $\mathbf{f}_j$ such that $\mathbf{d}=\sum_i\mathbf{e}_i+\sum_j\mathbf{f}_j$. Since $\Gr_{\mathbf{d}}^\circ(M_{T,\ell})$ is a constructible set in $\Gr_{\mathbf{d}}(M_{T,\ell})$, Proposition~5.1 of~\cite{Haupt} implies the equality of Euler characteristics $\chi(\Gr_{\mathbf{d}}^\circ(M_{T,\ell}))=\chi(\Gr_{\mathbf{d}}^\circ(M_{T,\ell})^{\mathbb{T}})$. The lemma follows from this and the above formula for the set of fixed points.
\end{proof}

The following is one of the main results of this paper. For convenience, we will write $X_{\mathbf{d}}=\prod_jX_j^{d_j}$ for any dimension vector~$\mathbf{d}$.

\begin{theorem}
\label{thm:categorifycanonicalbasis}
For any integral lamination $\ell\in\mathcal{A}(\mathbb{Z}^t)$, we have 
\[
\mathbb{I}(\ell)=\sum_{\mathbf{d}\in\mathbb{Z}^n,i\in\mathbb{Z}}(-1)^i\dim_{\mathbb{C}}\mathcal{H}_{\mathbf{d},i}^{T,\ell}\cdot X_{\mathbf{d}}.
\]
\end{theorem}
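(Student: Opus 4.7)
The plan is to combine the multiplicative decomposition of $\mathbb{I}(\ell)$ from Proposition~\ref{prop:generalFpolynomial} and Lemma~\ref{lem:arcFpolynomial} with the Euler-characteristic formulas for the string and band $F$-polynomials from Theorem~\ref{thm:Fpolynomialstringmodule} and Proposition~\ref{prop:Fpolynomialbandmodule}, and then to package the coefficients into Euler characteristics of $\Gr^\circ_{\mathbf{d}-\mathbf{h}}(M_{T,\ell})$ using Lemma~\ref{lem:eulercharacteristicsum}. The argument is ultimately a matching of coefficients on both sides.

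First I would decompose $\ell$ into its atomic pieces as in the paragraph before the theorem: a finite collection of arcs $\gamma_1,\ldots,\gamma_r$ coming from the open-curve components, and single-loop laminations $\ell_1,\ldots,\ell_s$ coming from the closed-curve components. Combining Proposition~\ref{prop:generalFpolynomial} with Lemma~\ref{lem:arcFpolynomial} (noting that boundary segments contribute trivial $F$-polynomials and only affect the shift $\mathbf{h}$) gives
\[
\mathbb{I}(\ell) \;=\; \Bigl(\prod_{i=1}^{r} F_{\gamma_i}(X_1,\ldots,X_n)\Bigr)\Bigl(\prod_{j=1}^{s} F_{\ell_j}(X_1,\ldots,X_n)\Bigr)\cdot X_1^{h_{\ell,1}}\cdots X_n^{h_{\ell,n}}.
\]

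Next I would substitute $F_{\gamma_i} = \sum_{\mathbf{e}_i}\chi(\Gr_{\mathbf{e}_i}(M_{T,\gamma_i}))X_{\mathbf{e}_i}$ from Theorem~\ref{thm:Fpolynomialstringmodule} and $F_{\ell_j} = \sum_{\mathbf{f}_j}\chi(\Gr^\circ_{\mathbf{f}_j}(M_{T,\ell_j}))X_{\mathbf{f}_j}$ from Proposition~\ref{prop:Fpolynomialbandmodule}, expand, and collect monomials. The coefficient of $X_{\mathbf{e}}$ in the resulting product is
\[
\sum_{\mathbf{e}_1+\cdots+\mathbf{e}_r+\mathbf{f}_1+\cdots+\mathbf{f}_s \,=\, \mathbf{e}} \;\prod_{i=1}^r \chi(\Gr_{\mathbf{e}_i}(M_{T,\gamma_i})) \cdot \prod_{j=1}^s \chi(\Gr^\circ_{\mathbf{f}_j}(M_{T,\ell_j})),
\]
which by Lemma~\ref{lem:eulercharacteristicsum} equals $\chi(\Gr^\circ_{\mathbf{e}}(M_{T,\ell}))$. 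Reindexing by $\mathbf{d} = \mathbf{e} + \mathbf{h}$ yields
\[
\mathbb{I}(\ell) \;=\; \sum_{\mathbf{d}\in\mathbb{Z}^n} \chi\bigl(\Gr^\circ_{\mathbf{d}-\mathbf{h}}(M_{T,\ell})\bigr) \cdot X_{\mathbf{d}}.
\]

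The final step is to rewrite each Euler characteristic as an alternating sum of singular Betti numbers, i.e.\ $\chi(\Gr^\circ_{\mathbf{d}-\mathbf{h}}(M_{T,\ell})) = \sum_i (-1)^i \dim_{\mathbb{C}} H^i(\Gr^\circ_{\mathbf{d}-\mathbf{h}}(M_{T,\ell}),\mathbb{C}) = \sum_i(-1)^i\dim_{\mathbb{C}}\mathcal{H}^{T,\ell}_{\mathbf{d},i}$, and this is where the main technical subtlety lies. The Euler characteristics used in Lemma~\ref{lem:eulercharacteristicsum} are the topological ones invoked in Haupt's theorem, and the target variety $\Gr^\circ_{\mathbf{d}-\mathbf{h}}(M_{T,\ell})$ is a constructible subset of a complex projective variety rather than a closed subvariety. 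One has to justify that for such a constructible set the compactly supported and ordinary Euler characteristics agree; this follows from Deligne's result that $\chi_c = \chi$ on complex algebraic varieties, together with the additivity of $\chi_c$ over a finite stratification of the constructible set into locally closed complex subvarieties. I expect this topological identification to be the main obstacle; with it in hand, the rest of the proof is a bookkeeping exercise that assembles the pieces described above.
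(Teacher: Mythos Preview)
Your proposal is correct and follows essentially the same route as the paper: both arguments combine Proposition~\ref{prop:generalFpolynomial} (together with Lemma~\ref{lem:arcFpolynomial} to split the open-curve $F$-polynomials into factors $F_{\gamma_i}$), Theorem~\ref{thm:Fpolynomialstringmodule}, Proposition~\ref{prop:Fpolynomialbandmodule}, and Lemma~\ref{lem:eulercharacteristicsum}, differing only in the direction of the computation (the paper starts from the cohomological side and works toward $\mathbb{I}(\ell)$, you start from $\mathbb{I}(\ell)$). Your explicit discussion of why $\chi=\chi_c$ for the constructible set $\Gr^\circ_{\mathbf{d}-\mathbf{h}}(M_{T,\ell})$ is a point the paper passes over in silence, so you are being more careful there rather than missing anything.
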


\begin{proof}
By definition of the space $\mathcal{H}_{\mathbf{d},i}^{T,\ell}$, we have 
\begin{align*}
\sum_{\mathbf{d},i}(-1)^i\dim_{\mathbb{C}}\mathcal{H}_{\mathbf{d},i}^{T,\ell}\cdot X_{\mathbf{d}} &= \sum_{\mathbf{d}}\chi(\Gr_{\mathbf{d}-\mathbf{h}}^\circ(M_{T,\ell}))X_{\mathbf{d}} \\
&= \left(\sum_{\mathbf{d}}\chi(\Gr_{\mathbf{d}}^\circ(M_{T,\ell}))X_{\mathbf{d}}\right)X_{\mathbf{h}}.
\end{align*}
By Lemma~\ref{lem:eulercharacteristicsum}, Theorem~\ref{thm:Fpolynomialstringmodule}, and Proposition~\ref{prop:Fpolynomialbandmodule}, the expression in parentheses is 
\begin{align*}
\sum_{\mathbf{d}} & \chi(\Gr_{\mathbf{d}}^\circ(M_{T,\ell}))\cdot X_{\mathbf{d}} \\ 
&= \prod_{i=1}^r\left(\sum_{\mathbf{e}}\chi(\Gr_{\mathbf{e}}(M_{T,\gamma_i}))X_{\mathbf{e}}\right)
\cdot \prod_{j=1}^s\left(\sum_{\mathbf{f}}\chi(\Gr_{\mathbf{f}}^\circ(M_{T,\ell_j}))X_{\mathbf{f}}\right) \\
&= \prod_{i=1}^r F_{\gamma_i}(X_1,\dots,X_n)\cdot\prod_{j=1}^s F_{\ell_j}(X_1,\dots,X_n).
\end{align*}
Substituting this back into the previous expression and comparing with Proposition~\ref{prop:generalFpolynomial}, we obtain the desired result.
\end{proof}

\section{Framed quiver moduli}
\label{sec:FramedQuiverModuli}

\subsection{Construction of framed quivers}

Given a quiver with potential $(Q,W)$, let us write $\mathcal{A}=\Mod J(Q,W)$ for the abelian category of finite-dimensional modules over the Jacobian algebra $J(Q,W)$. To make contact with the theory of framed BPS states in $\mathcal{N}=2$ field theories, we will consider certain moduli spaces of stable objects in the category~$\mathcal{A}$.

\begin{definition}
A \emph{stability condition} on the category $\mathcal{A}$ is a group homomorphism $Z:K(\mathcal{A})\rightarrow\mathbb{C}$ such that for any nonzero object $E$, the image $Z([E])$ of the class of $E$ in the Grothendieck group lies in the semi-closed upper half plane 
\[
\mathfrak{h}=\{r\exp(i\pi\phi):r>0 \text{ and } 0<\phi\leq1\}\subset\mathbb{C}.
\]
An object $M$ of $\mathcal{A}$ is said to be \emph{stable} if, for all proper subobjects $N$ of~$M$, we have 
\[
\arg Z([N])<\arg Z([M]).
\]
\end{definition}

There is a natural bijection between the isomorphism classes of simple objects in the category~$\mathcal{A}$ and vertices of the quiver~$Q$. We can therefore view the Grothendieck group $K(\mathcal{A})$ as the free abelian group on vertices of~$Q$. Then a stability condition is equivalent to an assignment of a point $Z_i\in\mathfrak{h}$ to each vertex $i\in Q_0$.

When talking about framed BPS states in theories of class~$\mathcal{S}$, the relevant moduli spaces parametrize representations of a framed version of the quiver associated to an ideal triangulation. To define this quiver, we consider a marked bordered surface $\Sigma$ and an ideal triangulation $T$ of~$\Sigma$. Given an integral lamination $\ell\in\mathcal{A}(\mathbb{Z}^t)$ on~$\Sigma$, we set 
\[
n_j\coloneqq\langle\mathbf{e}_j,\mathbf{h}\rangle
\]
where $\mathbf{h}=(h_{\ell,k})$ is the vector defined by Proposition~\ref{prop:generalFpolynomial} and $\langle\cdot,\cdot\rangle$ is the skew form on $\Gamma_T=\bigoplus_{j\in J}\mathbb{Z}\mathbf{e}_j$ defined on basis elements by 
\[
\langle\mathbf{e}_i,\mathbf{e}_j\rangle\coloneqq\varepsilon_{ij}.
\]
We can modify the quiver $Q=Q(T)$ using the $n_j$ to get a new quiver~$\widetilde{Q}$ as follows. The vertex set of this new quiver is defined as a union $\widetilde{Q}_0=Q_0\cup\{\infty\}$ of the vertices of $Q$ and a single additional vertex denoted~$\infty$. The arrows of the new quiver consist of the arrows of the original quiver $Q$, together with $n_j$ arrows from $j$ to~$\infty$ whenever $n_j\geq0$, and $-n_j$ arrows from $\infty$ to~$j$ whenever $n_j\leq0$.

\begin{definition}
The quiver $\widetilde{Q}$ defined in this way is called the \emph{framed quiver} associated to~$\ell$ and~$T$, and the additional vertex $\infty$ is called the \emph{framing vertex}.
\end{definition}

We have seen that the quiver $Q=Q(T)$ is equipped with a canonical potential $W=W(T)$. This can be extended to a potential $\widetilde{W}$ for the framed quiver $\widetilde{Q}$. It is a formal sum which includes all of the terms of $W$ regarded as cycles in the quiver $\widetilde{Q}$. Note that the framed quiver $\widetilde{Q}$ may contain new oriented cycles in addition to the ones already present in~$Q$. In this case, the potential $\widetilde{W}$ should include additional terms involving cycles that go through the framing vertex, and these terms should be chosen generically. Unfortunately, it is not known how to associate a canonical potential $\widetilde{W}$ to the framed quiver, and as shown in~\cite{CordovaNeitzke}, the quiver with potential $(\widetilde{Q},\widetilde{W})$ may not describe the correct spectrum of framed BPS states if this potential is chosen incorrectly. For the remainder of this section, we will therefore impose the following assumption.

\begin{assumption}
\label{assumption}
The lamination $\ell$ satisfies $n_j\geq0$ for every arc~$j$ of the chosen ideal triangulation.
\end{assumption}

Under this assumption, there are no additional oriented cycles in the framed quiver $\widetilde{Q}$, and hence there is no ambiguity in the choice of potential $\widetilde{W}$.

Finally, given a stability condition $Z:\Gamma_T\rightarrow\mathbb{C}$, we get a stability condition 
\[
\widetilde{Z}:\Gamma_T\oplus\mathbb{Z}\mathbf{e}_\infty\rightarrow\mathbb{C}
\]
for the framed quiver. The value of $\widetilde{Z}$ on a basis vector is given by $\widetilde{Z}(\mathbf{e}_i)=Z(\mathbf{e}_i)$ for $i\in Q_0$ and 
\[
\widetilde{Z}(\mathbf{e}_\infty)=\zeta m
\]
where $m\gg0$ and $\zeta\in U(1)$ is a phase chosen so that $\arg\widetilde{Z}(\mathbf{e}_\infty)<\arg\widetilde{Z}(\mathbf{e}_i)$ for all $i\in Q_0$. We will write $\mathcal{M}_{\mathbf{d}}^{\mathrm{st}}(\widetilde{Q})$ for the moduli space of stable modules over $J(\widetilde{Q},\widetilde{W})$ with respect to this stability condition having dimension $d_i$ at $i\in Q_0$ and having dimension~1 at~$\infty$.

For this particular choice of stability condition, we can give a simple algebraic description of the stable modules.

\begin{definition}
Let $M$ be a representation of $\widetilde{Q}$ and $V$ the representation defined by a one-dimensional vector space supported at the framing vertex. We say that $M$ is \emph{cyclic} if there is no proper subrepresentation of $M$ containing $V$. We say that $M$ is \emph{cocyclic} if all nonzero subrepresentations of $M$ contain~$V$.
\end{definition}

By our assumptions on $m$ and $\zeta$ in the definition of the stability condition $\widetilde{Z}$, we have the following characterization of stable modules for $J(\widetilde{Q},\widetilde{W})$.

\begin{proposition}
\label{prop:stablecyclic}
Let $M\in\Mod J(\widetilde{Q},\widetilde{W})$ be a module having dimension $d_i$ at $i\in Q_0$ and dimension~1 at~$\infty$. Then $M\in\mathcal{M}_{\mathbf{d}}^{\mathrm{st}}(\widetilde{Q})$ if and only if $M$ is cocyclic.
\end{proposition}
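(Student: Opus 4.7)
The strategy is to analyze proper nonzero submodules $N \subset M$ according to their behavior at the framing vertex $\infty$, and then to compare arguments of central charges in each case, exploiting the asymptotic regime $m \gg 0$. Under Assumption~\ref{assumption}, the framing vertex is a sink of $\widetilde{Q}$, so the one-dimensional subspace $V = M(\infty)$ is itself a submodule of $M$; since $\dim M(\infty) = 1$, any submodule $N$ satisfies either $N(\infty) = 0$ or $N(\infty) = V$, and cocyclicity is by definition the condition that the former never happens for $N \neq 0$.

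For the forward direction, suppose $M$ is stable and suppose for contradiction that there exists a nonzero $N \subset M$ with $N(\infty) = 0$. Then $\widetilde{Z}([N]) = \sum_{i \in Q_0} e_i Z(\mathbf{e}_i)$ is a positive combination of the $Z(\mathbf{e}_i)$, all of whose arguments are strictly greater than $\arg \zeta$; hence $\arg \widetilde{Z}([N]) > \arg \zeta$. On the other hand $\widetilde{Z}([M]) = \zeta m + \sum d_i Z(\mathbf{e}_i)$, so $\arg \widetilde{Z}([M]) \to \arg \zeta$ as $m \to \infty$, and in particular for $m \gg 0$ it lies strictly below $\arg \widetilde{Z}([N])$, contradicting stability. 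Thus every nonzero submodule of $M$ satisfies $N(\infty) = V$, i.e., $M$ is cocyclic.

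For the converse, suppose $M$ is cocyclic and let $N \subsetneq M$ be any nonzero proper submodule; then $N(\infty) = V$. Writing $\widetilde{Z}([N]) = \zeta m + w$ and $\widetilde{Z}([M]) = \zeta m + w'$ with $w = \sum e_i Z(\mathbf{e}_i)$ and $w' = \sum d_i Z(\mathbf{e}_i)$, the difference $w' - w = \sum (d_i - e_i) Z(\mathbf{e}_i)$ is a nonzero positive combination of vectors whose arguments are strictly greater than $\arg \zeta$, hence $\arg(w'-w) > \arg \zeta$. A direct calculation gives
\[
\mathrm{Im}\bigl(\overline{\widetilde{Z}([N])} \cdot \widetilde{Z}([M])\bigr) = m \cdot \mathrm{Im}\bigl(\bar{\zeta}(w' - w)\bigr) + O(1),
\]
and the leading coefficient is strictly positive by the argument condition, so for $m$ sufficiently large this imaginary part is positive, which is equivalent to $\arg \widetilde{Z}([N]) < \arg \widetilde{Z}([M])$. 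Since there are only finitely many possible dimension vectors $\mathbf{e} \le \mathbf{d}$, a single choice of $m$ handles all such $N$ uniformly, so $M$ is stable.

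The argument is essentially an elementary comparison of arguments as $m \to \infty$, and there is no significant obstacle beyond verifying the sign in the displayed formula. The one point worth emphasizing is the essential role of Assumption~\ref{assumption}: without it the vertex $\infty$ need not be a sink in $\widetilde{Q}$, and the clean dichotomy $N(\infty) \in \{0, V\}$ together with the identification of $V$ as an honest submodule would both fail, and the equivalence would have to be restated in more cumbersome terms.
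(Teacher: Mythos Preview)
Your argument is correct. The paper does not supply a proof of this proposition at all; it simply states that the characterization follows from the assumptions on $m$ and $\zeta$, so you are filling in exactly the details the paper leaves implicit, and your two-case split on $N(\infty)$ together with the $m\to\infty$ asymptotics is the standard way to do it.

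One small remark on your closing paragraph: the dichotomy $N(\infty)\in\{0,V\}$ follows purely from $\dim M(\infty)=1$ and does not depend on $\infty$ being a sink, and neither direction of your proof actually uses that $V$ is a subrepresentation of $M$. So Assumption~\ref{assumption} is not in fact essential to the \emph{proof} of the equivalence between stability and cocyclicity; the paper imposes it for other reasons (to avoid ambiguity in $\widetilde{W}$ and to have the later isomorphism with the quiver Grassmannian). Your proof goes through verbatim without it.
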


\subsection{Moduli spaces of cocyclic modules}

In this subsection, we will discuss in detail the moduli space of cocyclic representations of a framed quiver. In this discussion, we will fix a dimension vector $\mathbf{d}=(d_j)_{j\in Q_0}$ for the quiver $Q=Q(T)$ as well as an integral vector $\mathbf{n}=(n_j)_{j\in Q_0}$ with components $n_j\geq0$. For each $j\in Q_0$, we write $M_j$ (respectively,~$V_j$) for a fixed $\mathbb{C}$-vector space of dimension~$d_j$ (respectively,~$n_j$). We will consider the $Q_0$-graded vector spaces 
\[
M=\bigoplus_{j\in Q_0}M_j \quad\text{and}\quad V=\bigoplus_{j\in Q_0}V_j
\]
and $Q_0$-graded maps between them.

A $\mathbf{d}$-dimensional representation of~$Q$ can be viewed as an element of the variety 
\[
\Rep(Q,\mathbf{d})=\bigoplus_{a:i\rightarrow j}\Hom_\mathbb{C}(M_i,M_j)
\]
where the sum runs over all arrows in~$Q$. By Theorem~\ref{thm:Jfinitedimensional}, the Jacobian algebra $J=J(Q,W)$ can be written as a quotient $J=\mathbb{C}Q/\mathfrak{a}$ of the path algebra by an ideal~$\mathfrak{a}$. We will write $\Rep(J,\mathbf{d})$ for the subvariety of $\Rep(Q,\mathbf{d})$ consisting of points which, when viewed as modules over the path algebra, are annihilated by the ideal $\mathfrak{a}$. Note that the group 
\[
G_{\mathbf{d}}=\prod_{j\in Q_0}GL(M_j)
\]
acts naturally on the varieties $\Rep(Q,\mathbf{d})$ and $\Rep(J,\mathbf{d})$ by 
\[
(g_j)_{j\in Q_0}\cdot(M_a)_{a:i\rightarrow j}=(g_jM_ag_i^{-1})_{a:i\rightarrow j}.
\]
The orbits of this group action are identified with isomorphism classes of representations with dimension vector $\mathbf{d}$.

As we have seen, the integers $n_j$ can be used to construct a framed quiver $\widetilde{Q}$ with vertex set $Q_0\cup\{\infty\}$ and $n_j$ arrows $j\rightarrow\infty$. A $\mathbf{d}$-dimensional representation of this framed quiver~$\widetilde{Q}$ is a point of the variety 
\begin{align*}
\Rep_{\mathrm{fr}}(Q,\mathbf{d},\mathbf{n}) &= \Rep(Q,\mathbf{d})\times\bigoplus_j\Hom_\mathbb{C}(\mathbb{C}^{d_j},\mathbb{C})^{n_j} \\
&\cong \Rep(Q,\mathbf{d})\times\bigoplus_j\Hom_\mathbb{C}(M_j,V_j).
\end{align*}
We will write $\Rep_{\mathrm{fr}}(J,\mathbf{d},\mathbf{n})$ for the subvariety consisting of pairs $(M,f)$ where $M\in\Rep(Q,\mathbf{d})$ is a representation that lies in $\Rep(J,\mathbf{d})$ and $f=(f_j)_{j\in Q_0}$ is a map $M\rightarrow V$ of $Q_0$-graded vector spaces. Such a pair corresponds to a cocyclic module over the Jacobian algebra if there is no nonzero subrepresentation of $M$ contained in the kernel of $f$. We will write $\Rep_{\mathrm{fr}}^{\mathrm{cc}}(J,\mathbf{d},\mathbf{n})$ for the subset of pairs with this property. The group $G_{\mathbf{d}}$ acts on all three of the sets $\Rep_{\mathrm{fr}}(Q,\mathbf{d},\mathbf{n})$, $\Rep_{\mathrm{fr}}(J,\mathbf{d},\mathbf{n})$, and $\Rep_{\mathrm{fr}}^{\mathrm{cc}}(J,\mathbf{d},\mathbf{n})$ by 
\[
g\cdot(M,(f_j)_{j\in Q_0})=(g\cdot M,(f_jg_j^{-1})_{j\in Q_0})
\]
where $g=(g_j)_{j\in Q_0}\in G_{\mathbf{d}}$, and the orbits correspond to isomorphism classes of representations of the framed quiver $\widetilde{Q}$. We consider the moduli space 
\[
\mathcal{M}_{\mathbf{d}}^{\mathrm{cc}}(\widetilde{Q})=\Rep_{\mathrm{fr}}^{\mathrm{cc}}(J,\mathbf{d},\mathbf{n})/G_{\mathbf{d}}
\]
parametrizing cocyclic representations of $\widetilde{Q}$.

\subsection{Relation to quiver Grassmannians}

Finally, we relate the previous constructions involving framed quivers to the quiver Grassmannians used to categorify the canonical basis. We begin by recalling some basic ideas from the representation theory of quivers.

\begin{definition}
For each vertex $j$ of a quiver $Q$, let $E_j$ denote the simple representation at~$j$ defined by $\underline{\dim}\,E_j=\mathbf{e}_j$. Then the \emph{projective cover} of $E_j$ is the representation $P_j$ of~$Q$ such that $P_j(i)$ is the $\mathbb{C}$-vector space with basis given by the set of paths from $j$ to~$i$. The \emph{injective hull} of~$E_j$ is the representation $I_j$ of~$Q$ such that $I_j(i)$ is the dual of the $\mathbb{C}$-vector space with basis given by the set of paths from~$i$ to~$j$.
\end{definition}

Similarly, we define a module $I_j$ over the Jacobian algebra $J=\mathbb{C}Q/\mathfrak{a}$ by letting $I_j(i)$ be the dual of the $\mathbb{C}$-vector space with basis given by the images in~$J$ of paths from~$i$ to~$j$. Denoting by $I\otimes V$ the module $I\otimes V=\bigoplus_{j\in Q_0} I_j\otimes_\mathbb{C} V_j$, we have isomorphisms 
\[
(I\otimes V)(i)\cong\bigoplus_{j\in Q_0}I_j(i)\otimes_\mathbb{C} V_j\cong\bigoplus_{j\in Q_0}\bigoplus_{i\leadsto j}V_j
\]
where $i\leadsto j$ indicates the element of the Jacobian algebra represented by a path from~$i$ to~$j$.

The next result follows from a theorem of Fedotov~\cite{Fedotov}, which generalizes earlier work of Reineke~\cite{Reineke}.

\begin{theorem}[\cite{Fedotov}, Theorem~3.5]
\label{thm:Fedotov}
There is an isomorphism 
\[
\mathcal{M}_{\mathbf{d}}^{\mathrm{cc}}(\widetilde{Q})\cong\Gr_{\mathbf{d}}(I\otimes V)
\]
of the moduli space of cocyclic modules with the Grassmannian of subrepresentations of the representation $I\otimes V$ of~$Q$ satisfying the relations coming from the potential.
\end{theorem}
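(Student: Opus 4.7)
The plan is to exhibit an explicit natural bijection between framed cocyclic modules and subrepresentations of $I\otimes V$, using the fact that $I$ is an injective cogenerator of the category of finite-dimensional $J$-modules. For each vertex $j$, the injective hull $I_j$ satisfies the natural identification $\Hom_J(M,I_j)\cong M(j)^*$ for any finite-dimensional $J$-module $M$. Tensoring with $V_j$ and summing over $j\in Q_0$ yields a natural identification
\[
\Hom_J(M, I\otimes V)\cong\bigoplus_{j\in Q_0}\Hom_{\mathbb{C}}(M(j),V_j),
\]
whose right-hand side is precisely the space of framing data $f=(f_j)$ appearing in the definition of $\Rep_{\mathrm{fr}}(J,\mathbf{d},\mathbf{n})$. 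Unpacking the adjunction, the $J$-module map $\tilde{f}\colon M\to I\otimes V$ associated to $f$ sends $m\in M(i)$ to the element of $(I\otimes V)(i)$ whose component along a path $p$ from $i$ to $j$ is $f_j(p\cdot m)$.

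Next I would check that the cocyclic condition on $(M,f)$ is equivalent to $\tilde{f}$ being injective. The submodule $\ker\tilde{f}\subseteq M$ is automatically contained in $\ker f$, since the trivial path at $j$ recovers $f_j$ itself. Conversely, any $J$-submodule $N\subseteq M$ contained in $\ker f$ is stable under multiplication by paths, so $f_j(p\cdot n)=0$ for all $n\in N$ and all $p$, forcing $N\subseteq\ker\tilde{f}$. Hence $\ker\tilde{f}$ is the largest $J$-submodule of $M$ contained in $\ker f$, and it vanishes precisely when no nonzero $J$-submodule of $M$ lies in $\ker f$, i.e.\ exactly when $(M,f)$ is cocyclic.

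With this in hand, the assignment $(M,f)\mapsto\mathrm{image}(\tilde{f})$ descends along $G_{\mathbf{d}}$-orbits to a well-defined map from $\mathcal{M}_{\mathbf{d}}^{\mathrm{cc}}(\widetilde{Q})$ to $\Gr_{\mathbf{d}}(I\otimes V)$; in the reverse direction one sends a subrepresentation $N\subseteq I\otimes V$ of dimension vector $\mathbf{d}$ to the pair $(N,f)$ where $f$ is the composition of the inclusion $N\hookrightarrow I\otimes V$ with the projection $I\otimes V\to V$ onto the trivial-path summands. Any choice of $\mathbb{C}$-linear identifications $N(j)\cong M_j$ produces a representative in $\Rep_{\mathrm{fr}}^{\mathrm{cc}}(J,\mathbf{d},\mathbf{n})$, and two such choices differ by a unique element of $G_{\mathbf{d}}$, so the two constructions are mutually inverse on the level of sets.

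The remaining point concerns compatibility with the potential and with the scheme structure. Under Assumption~\ref{assumption}, the framed potential $\widetilde{W}$ coincides with $W$ as an element of $\widehat{\mathbb{C}\widetilde{Q}}$, and the cyclic derivatives with respect to the framing arrows vanish, so a representation of $\widetilde{Q}$ satisfies the Jacobian relations if and only if its unframed part is a $J$-module; in particular every $Q$-subrepresentation of $I\otimes V$ automatically lies in the Grassmannian of $J$-submodules. The main technical obstacle I expect is upgrading the set-theoretic bijection above to an isomorphism of varieties, which amounts to showing that the surjection $\Rep_{\mathrm{fr}}^{\mathrm{cc}}(J,\mathbf{d},\mathbf{n})\to\Gr_{\mathbf{d}}(I\otimes V)$ is a principal $G_{\mathbf{d}}$-bundle in the Zariski topology. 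Following the strategy of Fedotov, which extends Reineke's earlier treatment of the acyclic case, one obtains the required local trivializations from local algebraic splittings of the inclusion $N\hookrightarrow I\otimes V$ on each $Q_0$-graded component, thus identifying the two sides as schemes.
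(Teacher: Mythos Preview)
The paper does not supply a proof of this statement: it is quoted verbatim as a result of Fedotov (Theorem~3.5 in~\cite{Fedotov}), generalizing Reineke~\cite{Reineke}, and is used as a black box in the proof of the final theorem. Your sketch is an accurate reconstruction of Fedotov's argument: the key identification $\Hom_J(M,I\otimes V)\cong\bigoplus_j\Hom_{\mathbb{C}}(M(j),V_j)$ coming from the defining property of the injectives, the observation that $\ker\tilde f$ is the maximal submodule of $M$ contained in $\ker f$ so that cocyclicity is equivalent to injectivity of~$\tilde f$, and the passage to a principal $G_{\mathbf d}$-bundle to upgrade the set-theoretic bijection to an isomorphism of varieties.

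One small point worth tightening in your inverse construction: having taken $N\subseteq I\otimes V$ and defined $f$ as the composite of the inclusion with projection to the trivial-path summands, you should verify that the module map $\tilde f\colon N\to I\otimes V$ built from this $f$ coincides with the original inclusion. This amounts to checking that an element of $(I\otimes V)(i)$ is determined by the trivial-path components of all its translates $p\cdot(-)$ along paths $p$ starting at $i$, which follows directly from the description of the $J$-module structure on $I_j$ but is the one place where the specific form of the injective hull is actually used. Apart from this, your sketch matches the cited proof.
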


\begin{lemma}
\label{lem:modulelamination}
If $\ell$ is an integral lamination satisfying Assumption~\ref{assumption}, then there is an isomorphism $I\otimes V\cong M_{T,\ell}$ of modules over the Jacobian algebra.
\end{lemma}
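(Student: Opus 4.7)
The strategy is to identify both sides of the claimed isomorphism as explicit direct sums of indecomposable $J$-modules and match them via Krull--Schmidt. Since $J = J(Q(T),W(T))$ is a string algebra (Theorem~\ref{thm:Jfinitedimensional}), every indecomposable finite-dimensional $J$-module is a string or band module by the Br\"ustle--Zhang classification, and these are in natural bijection with curves on $\Sigma$. The plan is therefore to show that the string and band modules appearing in $I\otimes V = \bigoplus_{j} I_j^{\oplus n_j}$ are, with multiplicity, the same as those appearing in $M_{T,\ell} = \bigoplus_{i} M_{T,\gamma_i} \oplus \bigoplus_{j} M_{T,\ell_j}$.

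The first step is to identify the injective hull $I_j$ concretely. From the description of $I_j(i)$ as the dual of the space of paths $i \leadsto j$ in $J$, together with the explicit presentation $J \cong \mathbb{C}Q(T)/\mathfrak{a}$ with $\mathfrak{a}$ admissible afforded by Theorem~\ref{thm:Jfinitedimensional}, I can read off the string underlying $I_j$: it is the maximal string terminating at $j$, obtained by concatenating the two fans of arrows coming from the two endpoints of the arc $j$. Geometrically, this string corresponds to a specific arc $\beta_j$ on $\Sigma$ that runs alongside $j$, crossing in order all the arcs of $T$ incident to the two marked points at its endpoints.

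Next, I would unpack the multiplicities. We have $n_j = \langle \mathbf{e}_j, \mathbf{h} \rangle = \sum_i \varepsilon_{ji}\, h_{\ell,i}$, where by Proposition~\ref{prop:generalFpolynomial} each $h_{\ell,i}$ assembles (signed) geometric intersection numbers of the curves of $\ell$ with the arc $i$ of $T$. Combined with the identification $M_{T,\beta_j} \cong I_j$ from the previous step, the core matching claim becomes: the multiset of arcs $\{\beta_j\}$ with multiplicities $n_j$ coincides with the multiset of curves in the chosen representative of $\ell$. For a single closed loop the identity reduces to a local check at each triangle the loop traverses; for arcs with endpoints on the boundary a similar but slightly more delicate local check is needed near the marked endpoints. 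From this combinatorial identity the lemma follows by decomposing both modules into indecomposables and invoking Krull--Schmidt.

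The main obstacle is this combinatorial matching. It amounts to a bookkeeping identity converting geometric intersection data into Krull--Schmidt multiplicities of injective summands, and the nonnegativity Assumption~\ref{assumption} that $n_j \geq 0$ is precisely what allows the identification to proceed without formal cancellations between ``positive'' and ``negative'' copies of arcs. I expect the verification to break into a finite number of local cases according to how each curve of $\ell$ sits inside the fans at the endpoints of each arc $j$, with the skew-form contribution $\varepsilon_{ji} h_{\ell,i}$ bookkeeping exactly the number of times the string for $\beta_j$ appears when one rewrites the strings/bands of the $M_{T,c}$ as a direct sum of injective strings.
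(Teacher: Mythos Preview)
Your Krull--Schmidt strategy and your identification of $I_j$ with the string module $M_{T,\beta_j}$ for the arc $\beta_j$ winding around the two endpoints of $j$ are both correct and agree with what the paper computes. The gap is in how you handle closed loops and in how you use Assumption~\ref{assumption}.

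You write that ``for a single closed loop the identity reduces to a local check at each triangle the loop traverses.'' No such check can succeed. A closed loop contributes a \emph{band} module summand to $M_{T,\ell}$, whereas $I\otimes V=\bigoplus_j I_j^{\oplus n_j}$ is a direct sum of \emph{string} modules; band modules are indecomposable and never isomorphic to string modules, so by Krull--Schmidt the isomorphism $I\otimes V\cong M_{T,\ell}$ is simply false whenever $\ell$ contains a loop. The paper's proof therefore begins by eliminating this case: it interprets $n_j$ as a negative shear parameter and observes that a closed loop forces the shear parameters to take both signs, contradicting Assumption~\ref{assumption}. You need this step before any matching can begin.

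There is a related gap for open curves. You assert that the multiset $\{\beta_j\}$ with multiplicities $n_j$ coincides with the multiset of arcs in $\ell$, but you do not explain why every arc of $\ell$ should have the special form $\beta_j$ at all. A generic arc gives a string module that is not injective, so the claim fails without further input. The paper again uses Assumption~\ref{assumption} globally here: via the shear-parameter description it argues that a curve $c_{k_r}'$ contributing to $n_j$ cannot connect opposite sides of any quadrilateral other than the one at $j$, for otherwise some $n_i<0$. This global constraint is what forces each $\gamma_{k_r}$ to satisfy $M_{T,\gamma_{k_r}}\cong I_j$; your proposed ``finite number of local cases'' inside individual triangles does not capture it, since the obstruction comes from comparing the curve's behaviour at \emph{different} arcs of $T$.
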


\begin{proof}
We can represent the lamination $\ell$ by a collection of curves $c_0,\dots,c_N$ of weight~$\pm1$ that intersect the edges of~$T$ transversely in the minimal number of points. If $c_i$ connects points on the boundary of the surface, let $c_i'$ be the curve on the enlarged surface~$\overline{\Sigma}$ considered in Lemma~\ref{lem:productgvectors}. Otherwise, if $c_i$ is a closed curve, let us define $c_i'$ to be the curve~$c_i$, considered as a curve on~$\overline{\Sigma}$. One can check that the integer $n_j$ is exactly the negative shear parameter of the lamination formed by the curves $c_0',\dots,c_N'$ associated to the arc~$j$. If $\ell$ contains a closed loop, then the shear parameters take both positive and negative values, contradicting Assumption~\ref{assumption}. Therefore $\ell$ cannot contain a closed loop.

Suppose $j$ is an arc of~$T$ such that $n_j>0$. Let $q$ be the quadrilateral formed by the two triangles that share the edge~$j$. Let $c_{k_1}',\dots,c_{k_s}'$ be the curves that go across~$j$, connecting opposite sides of~$q$ as illustrated in Figure~\ref{fig:modifiedcurve}.
\begin{figure}[ht]
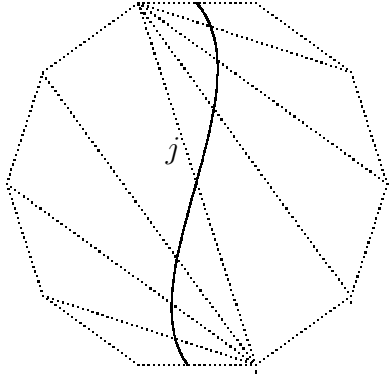
 \begin{center}
\[
\xy /l3pc/:
{\xypolygon10"A"{~:{(2,0):}~>{.}}};
{"A9"\PATH~={**@{.}}'"A4"},
{"A9"\PATH~={**@{.}}'"A5"},
{"A9"\PATH~={**@{.}}'"A6"},
{"A9"\PATH~={**@{.}}'"A7"},
{"A10"\PATH~={**@{.}}'"A4"},
{"A1"\PATH~={**@{.}}'"A4"},
{"A2"\PATH~={**@{.}}'"A4"},
(1,-1.9)*{}="1";
(1.1,1.9)*{}="2";
"1";"2" **\crv{(0.25,-1) & (1.75,1)};
(1.25,-0.35)*{j};
\endxy
\]
\caption{Geometry appearing in the proof of Lemma~\ref{lem:modulelamination}.\label{fig:modifiedcurve}}
\end{center} \end{figure}
If $c_{k_r}'$ connects opposite sides of another such quadrilateral, then it follows from the description in terms of shear parameters that we must have $n_i<0$ for some~$i$. Thus $c_{k_r}'$ cannot connect opposite sides of any quadrilateral other than~$q$ (see Figure~\ref{fig:modifiedcurve}).

Consider the arcs $i$ in the ideal triangulation~$T$ such that there exists a path from~$i$ to~$j$ in the quiver $Q=Q(T)$. These include all arcs that $\gamma_{k_r}$ intersects. If $i$ is any vertex such that $\gamma_{k_r}$ does not intersect $i$ but there exists a path $p$ from~$i$ to~$j$, then the path must contain two edges of one of the small counterclockwise oriented cycles in~$Q$. Call these edges $b$ and~$c$, and let $a$ be the remaining edge in the 3-cycle so that, up to cyclic equivalence, the potential can be written 
\[
W=abc+\dots.
\]
Then 
\[
\partial_aW=bc
\]
and so the image of $p$ in the quotient of $\mathbb{C}Q$ by the ideal generated by $\{\partial_aW:a\in Q_1\}$ is the zero class. Thus we see that $M_{T,\gamma_{k_r}}(i)\cong\bigoplus_{i\leadsto j}\mathbb{C}$. Taking a direct sum over~$r$ and~$j$, we find 
\[
M_{T,\ell}(i)\cong\bigoplus_j\bigoplus_{r=1}^s M_{T,\gamma_{k_r}}(i)\cong\bigoplus_j\bigoplus_{i\leadsto j}V_j\cong (I\otimes V)(i),
\]
and this in fact gives an isomorphism of modules.
\end{proof}

\begin{theorem}
Let $\sigma$ be a stability condition on the abelian category of modules over $J(Q(T),W(T))$. Let $\ell\in\mathcal{A}(\mathbb{Z}^t)$ be an integral lamination satisfying Assumption~\ref{assumption} with respect to~$T$. Then there is an isomorphism of varieties $\mathcal{M}_{\mathbf{d}}^{\mathrm{st}}(\widetilde{Q})\cong\Gr_{\mathbf{d}}(M_{T,\ell})$ for any dimension vector~$\mathbf{d}$. In particular, 
\[
\mathbb{I}(\ell)=\sum_{\mathbf{d}\in\mathbb{Z}^n, i\in\mathbb{Z}}(-1)^i\dim_{\mathbb{C}}\mathcal{H}^{\sigma,\ell}_{\mathbf{d},i}\cdot X_{\mathbf{d}}
\]
where $\mathcal{H}^{\sigma,\ell}_{\mathbf{d},i}=H^i(\mathcal{M}_{\mathbf{d}-\mathbf{h}}^{\mathrm{st}}(\widetilde{Q}),\mathbb{C})$.
\end{theorem}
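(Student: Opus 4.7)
The plan is to chain together three natural identifications provided by the preparatory results of this section, and then deduce the expansion formula as an immediate consequence of Theorem~\ref{thm:categorifycanonicalbasis}. Concretely, I would combine
\[
\mathcal{M}_{\mathbf{d}}^{\mathrm{st}}(\widetilde{Q}) \;=\; \mathcal{M}_{\mathbf{d}}^{\mathrm{cc}}(\widetilde{Q}) \;\cong\; \Gr_{\mathbf{d}}(I\otimes V) \;\cong\; \Gr_{\mathbf{d}}(M_{T,\ell}),
\]
where the first equality comes from Proposition~\ref{prop:stablecyclic}, the second isomorphism from Theorem~\ref{thm:Fedotov} (Fedotov), and the third from Lemma~\ref{lem:modulelamination}.

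The first step uses the specific choice of stability condition $\widetilde{Z}$: because $m\gg 0$ and $\arg\widetilde{Z}(\mathbf{e}_\infty)$ is smaller than every $\arg\widetilde{Z}(\mathbf{e}_i)$, Proposition~\ref{prop:stablecyclic} guarantees that stability is equivalent to cocyclicity. Since both moduli spaces arise as quotients of the same locally closed subset of $\Rep_{\mathrm{fr}}(J,\mathbf{d},\mathbf{n})$ by the same group $G_{\mathbf{d}}$, this identification is scheme-theoretic. The second step transports the moduli space to a quiver Grassmannian via Fedotov's theorem, and the third step replaces the representation $I\otimes V$ with the concretely defined module $M_{T,\ell}$; Assumption~\ref{assumption} is precisely what ensures that Lemma~\ref{lem:modulelamination} applies.

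The ``in particular'' statement then follows from Theorem~\ref{thm:categorifycanonicalbasis}, provided one knows that $\Gr^\circ_{\mathbf{d}-\mathbf{h}}(M_{T,\ell})=\Gr_{\mathbf{d}-\mathbf{h}}(M_{T,\ell})$ under Assumption~\ref{assumption}. This holds because the proof of Lemma~\ref{lem:modulelamination} already shows that under this assumption $\ell$ cannot contain any closed loops (otherwise the shear-parameter interpretation of the $n_j$ would force some $n_j$ to be negative). Consequently $M_{T,\ell}$ is a direct sum of string modules with no band summands, so the condition on the projections $\pi_j$ in the definition of $\Gr^\circ$ is vacuous and the two Grassmannians coincide. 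Substituting $H^i(\Gr^\circ_{\mathbf{d}-\mathbf{h}}(M_{T,\ell}),\mathbb{C})=H^i(\mathcal{M}_{\mathbf{d}-\mathbf{h}}^{\mathrm{st}}(\widetilde{Q}),\mathbb{C})$ into Theorem~\ref{thm:categorifycanonicalbasis} yields the claimed expansion.

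The main obstacle I anticipate is verifying that these identifications compose as isomorphisms of varieties rather than mere bijections at the level of isomorphism classes. In particular, one must check that Fedotov's isomorphism, originally phrased for unconstrained quiver Grassmannians, remains valid after restricting to the subschemes cut out by the Jacobian relations of $\widetilde{W}$, and that the isomorphism $I\otimes V\cong M_{T,\ell}$ supplied by Lemma~\ref{lem:modulelamination} is genuinely an isomorphism of $J(Q,W)$-modules, so that it induces an isomorphism of Grassmannians of $J$-submodules. Once these compatibilities are in place, the remainder of the argument is simply a matter of assembling the cited results in the correct order.
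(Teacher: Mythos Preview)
Your proposal is correct and follows essentially the same approach as the paper: chain Proposition~\ref{prop:stablecyclic}, Theorem~\ref{thm:Fedotov}, and Lemma~\ref{lem:modulelamination} to obtain $\mathcal{M}_{\mathbf{d}}^{\mathrm{st}}(\widetilde{Q})\cong\mathcal{M}_{\mathbf{d}}^{\mathrm{cc}}(\widetilde{Q})\cong\Gr_{\mathbf{d}}(I\otimes V)\cong\Gr_{\mathbf{d}}(M_{T,\ell})$, and then invoke Theorem~\ref{thm:categorifycanonicalbasis}. Your additional observation that Assumption~\ref{assumption} forces $\ell$ to have no closed loops, so that $\Gr_{\mathbf{d}}^\circ(M_{T,\ell})=\Gr_{\mathbf{d}}(M_{T,\ell})$, is exactly what is needed to justify the ``in particular'' clause, which the paper leaves implicit.
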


\begin{proof}
By Proposition~\ref{prop:stablecyclic}, we know that a stable representation of the framed quiver $\widetilde{Q}$ is the same as a cocyclic representation, and therefore we have an isomorphism 
\[
\mathcal{M}_{\mathbf{d}}^{\mathrm{st}}(\widetilde{Q})\cong\mathcal{M}_{\mathbf{d}}^{\mathrm{cc}}(\widetilde{Q}).
\]
Applying the isomorphisms of Theorem~\ref{thm:Fedotov} and Lemma~\ref{lem:modulelamination}, we have 
\[
\mathcal{M}_{\mathbf{d}}^{\mathrm{cc}}(\widetilde{Q})\cong\Gr_{\mathbf{d}}(M_{T,\ell}).
\]
The theorem follows.
\end{proof}

\section*{Acknowledgments}
\addcontentsline{toc}{section}{Acknowledgements}

In writing this paper, I have benefitted from conversations with many people, including Tom~Bridgeland, Michele~Cirafici, Michele~Del~Zotto, Joseph~Karmazyn, Daniel~Labardini-Fragoso, Sven~Meinhardt, Andrew~Neitzke, Harold~Williams, and Yu~Zhou.

\end{document}